\DeclareMathAlphabet{\mathbbm}{U}{bbm}{m}{n}
\newcommand{\doubletilde}[1]{
  \tilde{{\tilde{#1}}}}
\begin{document}
\newtheoremstyle{all}{11pt}{11pt}{\slshape}{}{\bfseries}{}{.5em}{}

\theoremstyle{all}

\newtheorem{itheorem}{Theorem}
\newtheorem{theorem}{Theorem}[section]
\newtheorem*{theoremfourfive}{Theorem 4.5}
\newtheorem*{proposition*}{Proposition}
\newtheorem{proposition}[theorem]{Proposition}
\newtheorem{corollary}[theorem]{Corollary}
\newtheorem{lemma}[theorem]{Lemma}
\newtheorem{assumption}[theorem]{Assumption}
\newtheorem{definition}[theorem]{Definition}
\newtheorem{ques}[theorem]{Question}
\newtheorem{conj}[theorem]{Conjecture}

\theoremstyle{remark}
\newtheorem{remark}[theorem]{Remark}
\newtheorem{example}{Example}[section]
\renewcommand{\theexample}{{\arabic{section}.\roman{example}}}
\newcommand{\nc}{\newcommand}
\newcommand{\renc}{\renewcommand}
\numberwithin{equation}{section}
\renc{\theequation}{\arabic{section}.\arabic{equation}}

\newcounter{subeqn}
\renewcommand{\thesubeqn}{\theequation\alph{subeqn}}
\newcommand{\subeqn}{\refstepcounter{subeqn}\tag{\thesubeqn}}\makeatletter
\@addtoreset{subeqn}{equation}
\newcommand{\newseq}{\refstepcounter{equation}}
  \nc{\kac}{\kappa^C}
  \newcommand{\ttalg}{{\it \doubletilde{\mathbb{T}}}\xspace}
\nc{\alg}{T}
\nc{\Lco}{L_{\la}}
\nc{\qD}{q^{\nicefrac 1D}}
\nc{\ocL}{M_{\la}}
\nc{\excise}[1]{}
\nc{\Dbe}{D^{\uparrow}}
\nc{\Dfg}{D^{\mathsf{fg}}}
\nc{\KLRwei}{\EuScript{W}}
\nc{\coset}{\EuScript{W}}
\nc{\zero}{o}
\nc{\defr}{\operatorname{def}}
\nc{\op}{\operatorname{op}}
\nc{\Sym}{\operatorname{Sym}}
\nc{\Symt}{S}
\nc{\hatD}{\widehat{\Delta}}
\nc{\tr}{\operatorname{tr}}
\newcommand{\Mirkovic}{Mirkovi\'c\xspace}
\nc{\tla}{\mathsf{t}_\la}
\nc{\llrr}{\langle\la,\rho\rangle}
\nc{\lllr}{\langle\la,\la\rangle}
\nc{\K}{\mathbbm{k}}
\nc{\Stosic}{Sto{\v{s}}i{\'c}\xspace}
\nc{\cd}{\mathcal{D}}
\nc{\cT}{\mathcal{T}}
\nc{\vd}{\mathbb{D}}
\nc{\lift}{\gamma}
\nc{\cox}{h}
\nc{\Aut}{\operatorname{Aut}}
\nc{\R}{\mathbb{R}}
\renc{\wr}{\operatorname{wr}}
\newcommand{\MTS}{\mathsf{MTS}}
  \nc{\Lab}[2]{\La^{#1}_{#2}}
  \nc{\Lamvwy}{\Lam\Bv\Bw\By}
  \nc{\Labwv}{\Lab\Bw\Bv}
  \nc{\nak}[3]{\mathcal{N}(#1,#2,#3)}
  \nc{\hw}{highest weight\xspace}
  \nc{\al}{\alpha}
\newcommand{\Gz}{G^0}
\newcommand{\Gc}[1]{G_{#1}}

\newcommand{\dgmod}{\operatorname{-dg-mod}}
\nc{\gmod}{\operatorname{-gmod}}
  \nc{\be}{\beta}
  \nc{\bM}{\mathbf{m}}
  \nc{\Bu}{\mathbf{u}}

  \nc{\bkh}{\backslash}
  \nc{\Bi}{\mathbf{i}}
  \nc{\Bm}{\mathbf{m}}
  \nc{\Bj}{\mathbf{j}}
 \nc{\Bk}{\mathbf{k}}
 \nc{\poly}{\mathscr{P}}
 \nc{\Flag}{\mathsf{Flag}}
 \nc{\Perv}{\mathsf{Perv}}
\newcommand{\bS}{\mathbb{S}}
\newcommand{\bT}{\mathbb{\tilde{T}}}
\newcommand{\bt}{\mathbbm{t}}
\nc{\dggmod}{\operatorname{-dggmod}}

\nc{\bd}{\mathbf{d}}
\nc{\second}{\tau}
\nc{\D}{\mathcal{D}}
\nc{\mmod}{\operatorname{-mod}}  
\newcommand{\red}{\mathfrak{r}}

\nc{\RAA}{R^\A_A}
  \nc{\Bv}{\mathbf{v}}
  \nc{\Bw}{\mathbf{w}}
\nc{\Id}{\operatorname{Id}}
\nc{\Cth}{S_h}
\nc{\Cft}{S_1}
\def\MHM{{\operatorname{MHM}}}

\newcommand{\cD}{\mathcal{D}}
\newcommand{\LCP}{\operatorname{LCP}}
  \nc{\By}{\mathbf{y}}
\nc{\eE}{\EuScript{E}}
  \nc{\Bz}{\mathbf{z}}
  \nc{\coker}{\mathrm{coker}\,}
  \renc{\C}{\mathbb{C}}
\nc{\ab}{{\operatorname{ab}}}
\nc{\wall}{\mathbbm{w}}
  \renc{\ch}{\mathrm{ch}}
  \nc{\de}{\delta}
  \nc{\ep}{\epsilon}
  \nc{\Rep}[2]{\mathsf{Rep}_{#1}^{#2}}
  \nc{\Ev}[2]{E_{#1}^{#2}}
  \nc{\fr}[1]{\mathfrak{#1}}
  \nc{\fp}{\fr p}
  \nc{\fq}{\fr q}
  \nc{\fl}{\fr l}
  \nc{\fgl}{\fr{gl}}
\nc{\rad}{\operatorname{rad}}
\nc{\Ind}{\operatorname{Ind}}
\newcommand{\sS}{\mathsf{S}} 
\newcommand{\sJ}{\mathsf{J}} 
\newcommand{\sT}{\mathsf{T}}
\newcommand{\cP}{\mathcal{P}}
\newcommand{\cellb}{a}
  \nc{\GL}{\mathrm{GL}}
\newcommand{\arxiv}[1]{\href{http://arxiv.org/abs/#1}{\tt arXiv:\nolinkurl{#1}}}
  \nc{\Hom}{\mathrm{Hom}}
  \nc{\im}{\mathrm{im}\,}
  \nc{\La}{\Lambda}
  \nc{\la}{\lambda}
  \nc{\mult}{b^{\mu}_{\la_0}\!}
  \nc{\mc}[1]{\mathcal{#1}}
  \nc{\om}{\omega}
\nc{\gl}{\mathfrak{gl}}
  \nc{\cF}{\mathcal{F}}
\nc{\cC}{\mathcal{C}}
  \nc{\Mor}{\mathsf{Mor}}
  \nc{\HOM}{\operatorname{HOM}}
  \nc{\Ob}{\mathsf{Ob}}
  \nc{\Vect}{\mathsf{Vect}}
\nc{\gVect}{\mathsf{gVect}}
  \nc{\modu}{\mathsf{-mod}}
\nc{\pmodu}{\mathsf{-pmod}}
  \nc{\qvw}[1]{\La(#1 \Bv,\Bw)}
  \nc{\van}[1]{\nu_{#1}}
  \nc{\Rperp}{R^\vee(X_0)^{\perp}}
  \nc{\si}{\sigma}
\nc{\sgns}{{\boldsymbol{\sigma}}}
  \nc{\croot}[1]{\al^\vee_{#1}}
\nc{\di}{\mathbf{d}}
  \nc{\SL}[1]{\mathrm{SL}_{#1}}
  \nc{\Th}{\theta}
  \nc{\vp}{\varphi}
  \nc{\wt}{\mathrm{wt}}
\nc{\te}{\tilde{e}}
\nc{\tf}{\tilde{f}}
\nc{\hwo}{\mathbb{V}}
\nc{\soc}{\operatorname{soc}}
\nc{\cosoc}{\operatorname{cosoc}}
 \nc{\Q}{\mathbb{Q}}
\nc{\LPC}{\mathsf{LPC}}
  \nc{\Z}{\mathbb{Z}}
  \nc{\Znn}{\Z_{\geq 0}}
  \nc{\ver}{\EuScript{V}}
  \nc{\Res}{\operatorname{Res}}
  \nc{\simples}{X}
  \nc{\edge}{\EuScript{E}}
  \nc{\Spec}{\mathrm{Spec}}
  \nc{\tie}{\EuScript{T}}
  \nc{\ml}[1]{\mathbb{D}^{#1}}
  \nc{\fQ}{\mathfrak{Q}}
        \nc{\fg}{\mathfrak{g}}
        \nc{\ft}{\mathfrak{t}}
  \nc{\Uq}{U_q(\fg)}
        \nc{\bom}{\boldsymbol{\omega}}
\nc{\bla}{{\underline{\boldsymbol{\la}}}}
\nc{\bmu}{{\underline{\boldsymbol{\mu}}}}
\nc{\bal}{{\boldsymbol{\al}}}
\nc{\bet}{{\boldsymbol{\eta}}}
\nc{\rola}{X}
\nc{\wela}{Y}
\nc{\fM}{\mathfrak{M}}
\nc{\fX}{\mathfrak{X}}
\nc{\fH}{\mathfrak{H}}
\nc{\fE}{\mathfrak{E}}
\nc{\fF}{\mathfrak{F}}
\nc{\fI}{\mathfrak{I}}
\nc{\qui}[2]{\fM_{#1}^{#2}}
\nc{\cL}{\mathcal{L}}
\nc{\ca}[2]{\fQ_{#1}^{#2}}
\nc{\cat}{\mathcal{V}}
\nc{\cata}{\mathfrak{V}}
\nc{\catf}{\mathscr{V}}
\nc{\hl}{\mathcal{X}}
\nc{\hld}{\EuScript{X}}
\nc{\hldbK}{\EuScript{X}^{\bla}_{\bar{\mathbb{K}}}}
\nc{\Iwahori}{\EuScript{I}}
\nc{\WC}{\EuScript{C}}

\nc{\pil}{{\boldsymbol{\pi}}^L}
\nc{\pir}{{\boldsymbol{\pi}}^R}
\nc{\cO}{\mathcal{O}}
\nc{\Ko}{\text{\Denarius}}
\nc{\Ei}{\fE_i}
\nc{\Fi}{\fF_i}
\nc{\fil}{\mathcal{H}}
\nc{\brr}[2]{\beta^R_{#1,#2}}
\nc{\brl}[2]{\beta^L_{#1,#2}}
\nc{\so}[2]{\EuScript{Q}^{#1}_{#2}}
\nc{\EW}{\mathbf{W}}
\nc{\rma}[2]{\mathbf{R}_{#1,#2}}
\nc{\Dif}{\EuScript{D}}\nc{\MDif}{\EuScript{E}}
\renc{\mod}{\mathsf{mod}}
\nc{\modg}{\mathsf{mod}^g}
\nc{\fmod}{\mathsf{mod}^{fd}}
\nc{\id}{\operatorname{id}}
\nc{\compat}{\EuScript{K}}
\nc{\DR}{\mathbf{DR}}
\nc{\End}{\operatorname{End}}
\nc{\Fun}{\operatorname{Fun}}
\nc{\Ext}{\operatorname{Ext}}
\nc{\tw}{\tau}
\nc{\A}{\EuScript{A}}
\nc{\Loc}{\mathsf{Loc}}
\nc{\eF}{\EuScript{F}}
\nc{\LAA}{\Loc^{\A}_{A}}
\nc{\gfq}[2]{B_{#1}^{#2}}
\nc{\qgf}[1]{A_{#1}}
\nc{\qgr}{\qgf\rho}
\nc{\tqgf}{\tilde A}
\nc{\Tr}{\operatorname{Tr}}
\nc{\Tor}{\operatorname{Tor}}
\nc{\cQ}{\mathcal{Q}}
\nc{\st}[1]{\Delta(#1)}
\nc{\cst}[1]{\nabla(#1)}
\nc{\ei}{\mathbf{e}_i}
\nc{\Be}{\mathbf{e}}
\nc{\Hck}{\mathfrak{H}}
\renc{\P}{\mathbb{P}}
\nc{\bbB}{\mathbb{B}}
\nc{\ssy}{\mathsf{y}}
\nc{\cI}{\mathcal{I}}
\nc{\cG}{\mathcal{G}}
\nc{\cH}{\mathcal{H}}
\nc{\coe}{\mathfrak{K}}
\nc{\pr}{\operatorname{pr}}
\nc{\bra}{\mathfrak{B}}
\nc{\rcl}{\rho^\vee(\la)}
\nc{\tU}{\mathcal{U}}
\nc{\dU}{{\stackon[8pt]{\tU}{\cdot}}}
\nc{\dT}{{\stackon[8pt]{\cT}{\cdot}}}

\nc{\RHom}{\R\operatorname{Hom}}
\nc{\tcO}{\tilde{\cO}}
\nc{\Yon}{\mathscr{Y}}
\nc{\sI}{{\mathsf{I}}}
\nc{\sptc}{\ft_{1,\Z}}
\nc{\spt}{\ft_{1,\R}}
\nc{\Bpsi}{u}
\nc{\rankm}{m-1}
\nc{\rankp}{m}
\nc{\acham}{\eta}
\nc{\hyper}{\mathsf{H}}
\nc{\AF}{\EuScript{F}\ell}
\nc{\VB}{\EuScript{X}}
\nc{\OHiggs}{\cO_{\operatorname{Higgs}}}
\nc{\OCoulomb}{\cO_{\operatorname{Coulomb}}}
\nc{\tOHiggs}{\tilde\cO_{\operatorname{Higgs}}}
\nc{\tOCoulomb}{\tilde\cO_{\operatorname{Coulomb}}}
\nc{\indx}{\mathcal{I}}
\nc{\redu}{K}
\nc{\Ba}{\mathbf{a}}
\nc{\Bb}{\mathbf{b}}
\nc{\Lotimes}{\overset{L}{\otimes}}
\nc{\AC}{C}
\nc{\ideal}{\mathscr{I}}
\nc{\ACs}{\mathscr{C}}
\nc{\Stein}{\mathscr{X}}
\newcommand{\cOg}{\mathcal{O}_{\!\operatorname{g}}}
\newcommand{\tcOg}{\mathcal{\tilde O}_{\!\operatorname{g}}}
\newcommand{\dOg}{D_{\cOg}}
\newcommand{\preO}{p\cOg}
\newcommand{\dpreO}{D_{p\cOg}}
\nc{\No}{H}
\nc{\To}{Q}
\nc{\tNo}{\tilde{H}}
\nc{\tTo}{\tilde{Q}}
\nc{\flav}{\phi}
\nc{\tF}{\tilde{F}}
\nc{\auto}{\alpha}
\nc{\zetal}{\zeta}
\nc{\zetap}{\beta}
\nc{\DO}{\mathsf{U}}
\nc{\Wei}{\EuScript{W}}
\nc{\bQ}{\mathbf{Q}}
\nc{\lp}{\nicefrac{\ell}{p}}
\newcommand{\CC}{\mathbb{C}}
\newcommand{\RR}{\mathbb{R}}
\newcommand{\ZZ}{\mathbb{Z}}
\newcommand{\cM}{\mathcal{M}}
\newcommand{\cN}{\mathcal{N}}
\newcommand{\sK}{\mathscr{K}}
\newcommand{\sL}{\mathscr{L}}
\newcommand{\sM}{\mathscr{M}}
\newcommand{\sP}{\mathscr{P}}
\newcommand{\sU}{\mathscr{U}}
\newcommand{\T}{\mathbf{T}}
\renewcommand{\U}{\mathbf{U}}
\nc{\vT}{\mathbb{T}}
\nc{\bvT}{T}

\nc{\tHom}{\underline{\operatorname{Hom}}}
\nc{\sHom}{\underline{\mathsf{Hom}}}

\nc{\wgmod}{\operatorname{-wgmod}}
\newcommand{\diag}{\mathrm{diag}}
\newcommand{\Frac}{\mathrm{Frac}}
\nc{\tWei}{\widetilde{\EuScript{W}}}
\setcounter{tocdepth}{2}
\newcommand{\thetitle}{Three perspectives on\\ categorical symmetric Howe duality}
\newcommand{\theshorttitle}{Three perspectives on categorical symmetric Howe duality}
\nc{\Fl}{\operatorname{Fl}}
\nc{\Pervc}[1]{\mathcal{P}erv_{#1}}
\nc{\mGT}{\mathsf{m}}
\nc{\perv}{\mathscr{F}}
\nc{\MaxSpec}{\operatorname{MaxSpec}}
\renc{\theitheorem}{\Alph{itheorem}}
\newcommand{\btodo}{\textsf}
\newcommand{\etodo}{\todo[inline,color=red!20]}

\baselineskip=1.1\baselineskip

 \usetikzlibrary{decorations.pathreplacing,backgrounds,decorations.markings,shapes.geometric,fit}
 \tikzset{tstar/.style={fill=white,draw,star,star points=5,star point ratio=0.45,inner
		sep=3pt}}
\tikzset{ucircle/.style={fill=black,circle,inner sep=2pt}}

\tikzset{wei/.style={draw=red,double=red!40!white,double distance=1.5pt,thin}}
\tikzset{awei/.style={draw=blue,double=blue!40!white,double distance=1.5pt,thin}}
\tikzset{bdot/.style={fill,circle,color=blue,inner sep=3pt,outer
    sep=0}}
\tikzset{dir/.style={postaction={decorate,decoration={markings,
    mark=at position .8 with {\arrow[scale=1.3]{>}}}}}}
\tikzset{rdir/.style={postaction={decorate,decoration={markings,
    mark=at position .8 with {\arrow[scale=1.3]{<}}}}}}
\tikzset{edir/.style={postaction={decorate,decoration={markings,
    mark=at position .2 with {\arrow[scale=1.3]{<}}}}}}

\tikzset{mid/.style={postaction={decorate,decoration={markings,
    mark=at position .5 with {\arrow[scale=1.5]{>}}}}}}  
    
    \begin{center}
\noindent {\large  \bf \thetitle}\\
\bigskip

\author[1,2]{ Ben Webster}
 \@author\\\bigskip

\author[1]{Jerry Guan}
\affil[1]{Department of Pure Mathematics,
   University of Waterloo \&} 
\affil[2]{Perimeter Institute for Mathematical Physics } 
{\rm with an appendix by }\\ 
\@author\\
{\textsl Waterloo, ON, Canada }\\
{\tt ben.webster@uwaterloo.ca}
\end{center}
\bigskip
{\small
\begin{quote}
\noindent {\em Abstract.}
In this paper, we consider the categorical symmetric Howe duality introduced by Khovanov, Lauda, Sussan and Yonezawa.  While originally defined from a purely diagrammatic perspective, this construction also has geometric and representation-theoretic interpretations, corresponding to certain perverse sheaves on spaces of quiver representations and the category of Gelfand-Tsetlin modules over $\mathfrak{gl}_n$.  

In particular, we show that the ``deformed Webster algebras'' discussed in \cite{KLSY} manifest a Koszul duality between blocks of the category of Gelfand-Tsetlin modules over $\mathfrak{gl}_n$, and the constructible sheaves on representations of a linear quiver invariant under a certain parabolic in the group that acts by changing bases.  Furthermore, we show that this duality intertwines translation functors with a diagrammatic categorical action (generalizing that of  \cite{KLSY}). 
\end{quote}
}
\section{Introduction}
In this paper, we discuss 3 different perspectives on a category which has shown up in several guises in recent years.  We can organize these perspectives as (1) diagrammatic, (2) representation-theoretic and (3) geometric. 

For the diagrammatic perspective, work of Khovanov and Lauda \cite{KLI} initiated a great burst of different algebras spanned by diagrams with locally defined relations; while no rubric can contain all of this efflorescence, the author introduced {\bf weighted Khovanov-Lauda-Rouquier algebras} \cite{WebwKLR} which include the algebras discussed in this paper (and many others we will not consider here).  The special case of interest to us was considered in recent work of Khovanov-Lauda-Sussan-Yonezawa \cite{KLSY};  in their terminology, this is a {\bf deformed Webster algebra}.  In the spirit of compromise, we will follow the terminology suggested by our collaborators in \cite{KTWWYO}, and write {\bf KLRW algebras}.  

As suggested by the title, we will focus on the case which is relevant to symmetric Howe duality; specifically, we consider the algebras that categorify $\mathfrak{sl}_\infty$-weight spaces of the symmetric power  $U(\mathfrak{n})\otimes \Sym^n(\C^{\infty}\otimes  \C^{\rankp})$, where $\mathfrak{n}\subset \mathfrak{sl}_{\rankp}$ is the Lie algebra of strictly upper-triangular matrices.   We can identify the $\mathfrak{sl}_\infty$ weights appearing with increasing $n$-tuples $\chi=(\chi_1\leq \chi_2\leq \cdots \leq \chi_n)$, and we denote the algebra categorifying this weight space by $\bT^\chi$. One can easily verify that as an $\mathfrak{sl}_m$-module, this weight space can be identified with $U(\mathfrak{n})\otimes \Sym^{g_1}(\C^m)\otimes \cdots \otimes \Sym^{g_k}(\C^m)$, where $g_*$ are multiplicities with which the distinct elements of $\chi$ repeat.  The algebra $\bT^\chi$ is a deformed version of the algebra $\tilde{T}^\bla$ as introduced in \cite[\S 4]{Webmerged} which is shown there to give a categorification of this tensor product. In  \cite{KLSY}, Khovanov-Lauda-Sussan-Yonezawa consider the case $\rankp=2$ and construct the Howe dual categorical $\mathfrak{sl}_\infty$-action on the categories of $\bT^\chi$-modules.  In this paper, we will generalize this result to all values of $\rankp$.  

We achieve this by considering the other perspectives mentioned above, where the Howe dual actions are consequences of previously constructed $\mathfrak{sl}_\infty$-actions.  
From the representation-theoretic perspective, we consider the category of {\bf Gelfand-Tsetlin modules} over $\mathfrak{gl}_n$.  Recall that a Gelfand-Tsetlin module over $\mathfrak{gl}_n$ is one on which the center of $U(\mathfrak{gl}_k)$ for all $k\leq n$ acts locally finitely; we will also sometimes want to consider  pro-Gelfand-Tsetlin modules, by which we mean topological modules where the action is only topologically locally finite.  This category has received a great deal of interest in recent years \cite{FGRnew,RZ18, FGRZVerma} but its objects have remained relatively mysterious.  Recent work of the author and collaborators \cite{KTWWYO,WebGT} gave a classification of the simples in a block of this category in general, but the combinatorics of the general case is somewhat complicated; some data on the complexity of the $\mathfrak{sl}_3$ and $\mathfrak{sl}_4$ case are presented in \cite{SilverthorneW}.  One of our motivations in this paper is to draw out the structure of this category in the most interesting case, that of an integral central character.
We'll show here (based on the techniques in \cite{WebGT}) that the algebras $\bT^\chi$ attached to the zero weight space for the action of $\mathfrak{sl}_{\rankp}$ control the category  $\mathcal{GT}_\chi$ of integral Gelfand-Tsetlin modules where now we interpret $\chi$ as a  central character of $Z_n=Z(U(\mathfrak{gl}_n))$.  We can also associate a parabolic $P_\chi$ to $\chi$, whose block sizes are the multiplicities of entries in $\chi$ which coincide.  

The algebras $\bT^\chi$ also have a topological interpretation in terms of convolution algebras and perverse sheaves.   We can view this as a
generalization of the well-known theorem of Beilinson-Ginzburg-Soergel which shows that the Koszul dual of an integral block $\mathcal{O}_\chi$ of deformed category $\mathcal{O}$ is the category of $P$-equivariant perverse sheaves on $GL_n/B$ where $P=P_\chi$ corresponds to the central character $\chi$ of the block.  Our main theorem explains how to extend this to Gelfand-Tsetlin modules.

Consider the vector space $V$ defined by  the set of quiver representations on the vector spaces $\C^1\overset{f_1}\to \C^2\overset{f_2}\to \cdots \overset{f_{n-2}}\to\C^{n-1}\overset{f_{n-1}}\to \C^{n}$ divided by the group $G$ that changes bases arbitrarily on $\C^1,\dots, \C^{n-1}$ and on $\C^{n}$ by elements of the group $P\subset GL_n$ (that is, preserving the standard partial flag corresponding to $P$). That is, \newseq
\begin{align*}
\label{eq:OGZ1}\subeqn G&=GL_{{1}}\times \cdots \times GL_{n-1}\times P\\
\label{eq:OGZ2}\subeqn  V&=\Hom(\C^1,\C^2)\oplus \cdots \oplus
      \Hom(\C^{n-2},\C^{n-1})\oplus \Hom(\C^{n-1},\C^n).
\end{align*}
with $\Gz=GL_{{n-1}}\times \cdots \times GL_{1}$.
Note that on an open subset of $V$, the maps $f_i$ are all injective, and the subspaces $$\operatorname{im}(f_{n-1}\cdots f_1)\subset\cdots \subset\operatorname{im}(f_{n-1}f_{n-2})\subset\operatorname{im}(f_{n-1})\subset \C^n$$ give a complete flag.  Thus, we can identify this open subset of $V/\Gz$ with the flag variety $\Fl=GL_n/B$.  In this paper, we will study $G$-equivariant sheaves on $V$ as an enlargement of the category of $P$-equivariant sheaves on $\Fl$.  

We let $\Gc{\chi}=P_{\chi}\times \Gz$ and consider the usual $\Gc{\chi}$-equivariant derived category of $\C$-vector spaces $D^{b}_{\Gc{\chi}}(V)$; we'll show that this category has a graded lift $D^{b,\operatorname{mix}}_{\Gc{\chi}}(V)$.   
\begin{itheorem}\label{thm:main}
We have equivalences of categories between:
\begin{enumerate}
    \item The category of weakly gradable finite-dimensional $\bT^\chi$-modules.
    \item The category $\mathcal{GT}_\chi$ of integral Gelfand-Tsetlin modules.
\end{enumerate}
Thus, the category $\bT^\chi\operatorname{-gmod}$ of all finitely generated graded $\bT^\chi$-modules is a graded lift $\widetilde{\mathcal{GT}}_\chi$ of the category of pro-Gelfand-Tsetlin modules.

We also have an equivalence of categories  between:
\begin{enumerate}
    \item[(1')] The category of linear complexes of projectives over $\bT^\chi$.  
    \item[(3)] The category of $P_\chi$-equivariant perverse sheaves on $V$.
\end{enumerate}
The categories $(1)$ and $(1')$ are in a certain sense Koszul dual, so the same is true of $(2)$ and $(3)$.  These equivalences are induced by equivalences of derived categories
\[D^b(\widetilde{\mathcal{GT}}_\chi) \cong D^b(\bT^\chi\operatorname{-gmod})\cong  D^{b,\operatorname{mix}}_{\Gc{\chi}}(V).\]
\end{itheorem}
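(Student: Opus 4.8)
The plan is to prove the three equivalences more or less independently against the common ``hub'' of $\bT^\chi\gmod$, and then deduce the Koszul-duality statement formally from the structure theory of standard Koszul algebras. I would organize the argument as follows.

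\emph{Step 1: The representation-theoretic equivalence $(1)\cong(2)$.} Here I would lean on the classification of simples in an integral block of Gelfand--Tsetlin modules from \cite{KTWWYO,WebGT}. The strategy is to build a projective generator $P_\chi$ of a suitable truncation of $\mathcal{GT}_\chi$ (or of its pro-completion) and identify $\End(P_\chi)$ with $\bT^\chi$. Concretely, one realizes Gelfand--Tsetlin modules via the ``Gelfand--Tsetlin functor'' relating $U(\gl_n)$-modules to modules over a suitable idempotent-truncated (completed) algebra, using the fact that the Gelfand--Tsetlin subalgebra acts locally finitely; matching the combinatorics of maximal ideals of the GT subalgebra in the block $\chi$ with the bijection $\{$Stuple/loading data$\}\leftrightarrow\{$simples of $\bT^\chi\}$ from \cite{WebGT} gives the identification on objects, and the relations of $\bT^\chi$ are checked diagrammatically against composition of intertwiners. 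Passing from finite-dimensional to pro-objects corresponds to passing from $\fmod$ (weakly gradable finite-dimensional modules) to all of $\gmod$, as in the $\mathfrak{gl}_2$ case of \cite{KLSY}; the word ``weakly gradable'' is what lets one forget the grading to land in the honestly ungraded category $\mathcal{GT}_\chi$.

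\emph{Step 2: The geometric equivalence and the mixed derived equivalence.} I would first establish $D^b(\bT^\chi\gmod)\cong D^{b,\operatorname{mix}}_{G_\chi}(V)$, from which both the perverse-sheaf statement $(1')\cong(3)$ and the first derived equivalence follow. The key input is a convolution-algebra description: on the open locus of $V$ where all $f_i$ are injective one recovers $P_\chi$-equivariant sheaves on $\Fl=GL_n/B$, and Beilinson--Ginzburg--Soergel \cite{} identifies the equivariant-derived category there with the derived category of an (graded) endomorphism algebra of a sum of (parity/IC) sheaves. The task is to show that the corresponding algebra for the \emph{whole} space $V$, stratified by $G$-orbits, is exactly $\bT^\chi$ — i.e.\ that the extra strata coming from non-injective $f_i$ account precisely for the extra idempotents/strands of the KLRW algebra relative to $\tilde T^\bla$. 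This is essentially the ``geometric realization of KLRW algebras'' philosophy of \cite{WebwKLR}; here one must check purity/parity of the relevant complexes (so that $\operatorname{Ext}$-algebras are formal and carry the expected grading), compute the graded dimensions of $\operatorname{Hom}$-spaces between the generating sheaves, and match them with the diagrammatic basis of $\bT^\chi$. Formality plus the parity vanishing then upgrades the $\operatorname{Ext}$-algebra identification to an equivalence of derived categories, and restricting to the heart (resp.\ to linear complexes of projectives, which is the standard model for the Koszul-dual heart) yields $(3)$ and $(1')$.

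\emph{Step 3: Koszulity and assembling the diagram.} To make sense of ``$(1)$ and $(1')$ are Koszul dual'' I would show $\bT^\chi$ is a (standard) Koszul algebra: with the grading from Step 2, $\bT^\chi$ is positively graded with semisimple degree-zero part, and the geometric side gives the Ext-algebra description that forces the Koszul property (pointwise purity of IC sheaves $\Rightarrow$ Koszulity, à la \cite{}). The category of linear complexes of projectives $\LCP(\bT^\chi)$ is then equivalent to $\bT^{\chi\,!}\gmod$ for the Koszul dual $\bT^{\chi\,!}$, and the derived equivalence $D^b(\bT^\chi\gmod)\cong D^b(\bT^{\chi\,!}\gmod)$ is the Koszul duality functor; combined with Steps 1 and 2 this threads all four categories together and gives the displayed chain of derived equivalences. (One should note $\bT^\chi$ need not be Koszul self-dual, so the Koszul dual is identified separately with the linear-complexes category rather than with $\bT^\chi$ itself.)

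\emph{Main obstacle.} The crux is Step 2: proving that $G_\chi$-equivariant (mixed) sheaves on the \emph{non-smooth, non-proper} space $V$ — not just on the open flag-variety locus — are governed by $\bT^\chi$. This requires controlling the geometry of the $G$-orbit stratification of $V$ (semismallness or at least parity of the relevant maps, a clean description of orbit closures and their IC sheaves), establishing the purity/parity vanishing needed for formality and Koszulity, and then matching graded $\operatorname{Hom}$-dimensions with the diagrammatic basis of the KLRW algebra $\bT^\chi$ from \cite{WebwKLR,KLSY}. The $\mathfrak{gl}_2$ picture of \cite{KLSY} is the prototype, but carrying the parity/formality bookkeeping through for general $\rankp$ and the full space $V$ (rather than a resolution of it) is where the real work lies.
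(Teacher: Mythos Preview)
Your outline is broadly correct and hits the right ingredients, but it differs from the paper's actual logic in one structural way worth noting. You propose to prove $(1)\cong(2)$ and the geometric equivalence \emph{independently} against the hub $\bT^\chi$; the paper instead routes the Gelfand--Tsetlin side \emph{through} the geometry. Concretely, the identification $\Hom(\Wei_\la,\Wei_{\la'})\cong e'(\la')\widehat{\bT}^\chi e'(\la)$ is obtained by first invoking \cite[Thm.~4.4 and (4.5)]{WebGT} to identify $\Hom(\Wei_\la,\Wei_{\la'})$ with $\Ext(\perv_{\Bi(\la)},\perv_{\Bi(\la')})$, and \emph{then} applying the geometric realization of $\bT^\chi$ as an Ext-algebra (Theorem~\ref{thm:T-iso}, which in turn is \cite[Thm.~4.5]{WebwKLR}). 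So the ``diagrammatic check of relations against intertwiners'' you describe is replaced by a two-step comparison with perverse-sheaf Ext-groups in the middle; a direct algebraic argument exists (the paper cites \cite{SilverthorneW}) but is not the route taken here.

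Relatedly, your ``main obstacle''---controlling the $G_\chi$-equivariant geometry of all of $V$, purity/parity, formality---is not redone in this paper: the Ext-algebra identification is imported wholesale from \cite{WebwKLR}, and purity comes for free because the maps $\pi^{\Bi}$ are proper from smooth sources. What the paper \emph{does} supply (in the appendix) is the orbit classification and equivariant simple-connectedness, which is what guarantees that every simple perverse sheaf is an $\mathbf{IC}_{\bQ}$ and appears as a summand of some $\perv_{\Bi}$; this is the step that makes $\perv_\chi$ a compact generator and hence gives the derived equivalence by dg-Morita theory. Your Step~3 on Koszulity is more than the paper actually proves: the phrase ``in a certain sense Koszul dual'' in the statement is left informal, and the equivalence $(1')\cong(3)$ is extracted from the derived equivalence rather than from a separate Koszulity argument.
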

Just as with the original Koszul duality of Beilinson--Ginzburg--Soergel, this result seems to be a manifestation of the self-duality of $T^*GL_n/B$ under 3-dimensional mirror symmetry \cite{webster3DimensionalMirror2023}.  In particular, the Koszul duality of (2) and (3) is a special case of \cite[Prop. 4.7]{WebSD}, which relates versions of these categories for arbitrary Higgs and Coulomb branches of 3-dimensional gauge theories.

Furthermore, this equivalence matches two natural actions of 2-categories on the categories appearing in the theorem above.  As discussed previously, we can interpret $\chi$ as a weight of $\mathfrak{sl}_\infty$, and $K^0(\bT^\chi )$ as a weight space of a $\mathfrak{sl}_\infty$-module.  Thus, it's a natural question whether this can be extended to a categorical $\mathfrak{sl}_{\infty}$-action.  Not only is this possible, but in fact, the resulting action is one already known in both the representation-theoretic and geometric perspectives.

For Gelfand-Tsetlin modules, this action is by translation functors.  The functors of $\mathsf{E}(M)=\C^n\otimes M$ and $\mathsf{F}(M)=(\C^n)^*\otimes M$ act on the category of Gelfand-Tsetlin modules, and define an action of the level 0 Heisenberg category (also called the affine oriented Brauer category in \cite{BCNR}) on the category of all Gelfand-Tsetlin modules.  These functors decompose according to how they act on blocks, and by \cite[Th. A]{BSW3}, the summands of this functor define a categorical $\mathfrak{sl}_\infty$ action on the sum of the integral blocks $\mathcal{GT}_\chi$.  

On the other hand, as is always true for equivariant sheaves for different subgroups of a single group, the  $\Gc{\chi}=P_\chi\times \Gz$ equivariant derived categories of $V$ for different $\chi$ carry an action by convolution of the derived categories $D^b_{P_{\chi'}\times P_\chi} ( GL_n)$ where these subgroups act by left and right multiplication.   This is essentially an action of the category $\mathsf{Perv}$ from \cite[Def. 5]{Webcomparison} with minor notational changes to account for working with $\mathfrak{sl}_\infty.$  There is a 2-functor $\Phi$ from the 2-Kac-Moody algebra for $\mathfrak{sl}_\infty$ to $\mathsf{Perv}$ introduced in \cite{Webcomparison}, uniquely characterized by the property that it agrees with Khovanov and Lauda's original functor from \cite{KLIII} to modules over the cohomology rings of $GL_n/P_\chi$.

Both these actions must have an algebraic description in terms of bimodules over $\bT^{\chi'}$ and $\bT^{\chi}$.  In fact, the resulting bimodules are the {\bf ladder bimodules} defined in \cite{KLSY} in the case of $\mathfrak{sl}_2$.  Our comparison of these with the geometric action gives an easy and conceptual proof of the fact that these bimodules induce a categorical $\mathfrak{sl}_\infty$ action in the case not just of $\mathfrak{sl}_2$ (as is shown in \cite{KLSY}), but the more general case of $\mathfrak{sl}_n$.  
\begin{itheorem}\label{thm:action}
  The equivalences of Theorem \ref{thm:main} match:
  \begin{enumerate}
      \item The categorical $\mathfrak{sl}_\infty$-action on $\oplus_\chi \bT^\chi\mmod$ defined by ladder bimodules as in \cite{KLSY}.
      \item The categorical $\mathfrak{sl}_\infty$-action on $\oplus_\chi \mathcal{GT}_\chi$ defined by translation functors.
      \item The categorical $\mathfrak{sl}_\infty$-action on $\oplus_\chi D^{b,\operatorname{mix}}_{\Gc{\chi}}(V)$ defined by convolution with sheaves in $\mathsf{Perv}$.
  \end{enumerate}
\end{itheorem}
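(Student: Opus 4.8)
The plan is to transport the convolution action (3) across the derived equivalences of Theorem~\ref{thm:main} and then identify the result with (1) and (2) on the algebraic side. After transport all three become categorical $\mathfrak{sl}_\infty$-actions on $\bigoplus_\chi D^b(\bT^\chi\gmod)$, so it suffices to check that they agree as $2$-representations of the $2$-Kac-Moody $2$-category $\mathfrak{U}(\mathfrak{sl}_\infty)$. Since $\mathfrak{U}(\mathfrak{sl}_\infty)$ is generated, as a $2$-category, by the $1$-morphisms $\mathsf{E}_i,\mathsf{F}_i$ and the dot and crossing $2$-morphisms, and a $2$-functor out of it is determined by its values on these generators with the relations then automatic, we are reduced to matching, for each $i$, three avatars of the $i$th raising/lowering functor together with its distinguished endomorphisms: tensoring with a ladder bimodule over $(\bT^{\chi'},\bT^\chi)$ as in \cite{KLSY}; the block summand of $\mathsf{E}=\C^n\otimes(-)$ (resp.\ $\mathsf{F}=(\C^n)^*\otimes(-)$) on $\mathcal{GT}_\chi$; and convolution with the image under $\Phi$ of the corresponding generator of $\mathfrak{U}(\mathfrak{sl}_\infty)$ in $\mathsf{Perv}$.

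I would begin with the comparison of (1) and (3), which is the geometric heart of the matter. Recall from the proof of Theorem~\ref{thm:main} that $\bT^\chi$ is realized, up to graded Morita equivalence, as an $\Ext$-algebra $\Ext^\bullet_{G_\chi}(\mathcal{P},\mathcal{P})$ of a semisimple complex $\mathcal{P}$ on $V$, and that the equivalence $D^b(\bT^\chi\gmod)\cong D^{b,\operatorname{mix}}_{G_\chi}(V)$ is the induced Koszul-type duality, sending simple modules to shifts of simple perverse sheaves and projectives to semisimple complexes. Under this dictionary the convolution functors attached to the correspondences defining the $\mathsf{Perv}$-action become, on the algebraic side, tensoring with explicit $(\bT^{\chi'},\bT^\chi)$-bimodules obtained as $\Ext$-groups between $\mathcal{P}$ and the pull-push image of $\mathcal{P}'$; all the complexes involved are pure, so the relevant $\Ext$-algebras and bimodules are formal, and each bimodule is pinned down by its underlying graded vector space together with the induced dot and crossing maps. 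The characterization of $\Phi$ in \cite{Webcomparison}---agreement with Khovanov and Lauda's functor of \cite{KLIII} on modules over the cohomology rings of $GL_n/P_\chi$---together with the fact that the ladder bimodules of \cite{KLSY} are built precisely so as to categorify those Khovanov-Lauda maps, then identifies each transported bimodule with the corresponding ladder bimodule; the matching of the dot and crossing $2$-morphisms is a direct computation in the (nil)Hecke presentation of these $\Ext$-algebras.

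For the comparison of (1) and (2), note that under $\mathcal{GT}_\chi\simeq\bT^\chi\mmod$ (as used in Theorem~\ref{thm:main}, following \cite{WebGT}) the two actions live on the same category, so the comparison is purely algebraic: the functor $\C^n\otimes(-)$, block-decomposed, is tensoring with a bimodule built by ``adding a strand,'' and one checks directly that its block summands are the ladder bimodules of \cite{KLSY} (and likewise for $(\C^n)^*\otimes(-)$). Alternatively one may argue by uniqueness: the ladder-bimodule action makes $\bigoplus_\chi\bT^\chi\mmod$ into a tensor product categorification of $U(\mathfrak{n})\otimes\Sym^{g_1}(\C^{\rankp})\otimes\cdots$ in the sense relevant to \cite{Webmerged}, and by \cite[Th.~A]{BSW3} so does the translation-functor action; these categorifications have the same decategorification and compatible highest-weight structure, hence are intertwined by an equivalence, which one verifies can be taken to be the one of Theorem~\ref{thm:main} by comparing their restrictions to the highest-weight (vacuum) block.

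The main obstacle is the comparison in the second paragraph. The decategorified statement---that all three actions categorify the same $\mathfrak{sl}_\infty$-module map---is immediate, but promoting it to an isomorphism of $2$-representations requires genuine control of the $2$-morphism spaces: one must show that the $\Ext$-algebras of the convolution kernels, with their induced dot and crossing maps, actually satisfy the diagrammatic relations defining the ladder bimodules, and do so compatibly with the Koszul duality relating the two sides. This is where purity and formality of the relevant complexes, together with the rigidity of the $2$-functor $\Phi$, do the real work; one must also keep track of Tate twists throughout so that all identifications respect the graded/mixed structure, a bookkeeping step that is routine but must be carried out with care.
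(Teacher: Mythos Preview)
Your overall architecture matches the paper's: transport (3) to bimodules over $\bT^\chi$, then identify with (1) and (2) separately. But in both comparisons the actual identification is left vague, and the missing steps are the real content.

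For (1) versus (3), you appeal to purity/formality and the ``characterization of $\Phi$,'' concluding the bimodules match because the ladder bimodules ``are built precisely so as to categorify those Khovanov--Lauda maps''---but that is the statement to be proved. The paper's argument (Theorem~\ref{thm:E-iso}) is concrete and short: convolution with the constant sheaf on $P_{\chi',\chi}$ is restriction-then-induction between parabolics, so by adjunction $e(\Bi')B_{\C_{\chi',\chi}}e(\Bi)\cong\Ext_{D^b_{B\times G_0}(V)}(\perv_{\Bi'},\perv_{\Bi})^{S_{\chi',\chi}}$; the Borel case of Theorem~\ref{thm:T-iso} identifies this Ext-group with $e(\Bi')\bT e(\Bi)$, and taking $S_{\chi',\chi}$-invariants is the \emph{definition} of $e(\Bi')\mathbb{E}_i^{(a)}e(\Bi)$. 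You skip both the adjunction step and the reduction to the Borel. For (1) versus (2), neither of your alternatives works as stated: ``adding a strand'' and ``one checks directly'' do not supply the check, and the uniqueness argument is unjustified---\cite[Th.~A]{BSW3} yields only a Kac--Moody $2$-representation, not a tensor product categorification, and $\mathcal{GT}_\chi$ carries no obvious highest-weight structure to invoke. The paper instead identifies $\C^n\otimes U$ with the sub-bimodule $U'\subset U(\mathfrak{gl}_{n+1})$ generated by $e_n$, writes the image of $X_n^+$ in the completed bimodule explicitly via the orthogonal Gelfand--Zetlin polynomial representation of $U(\mathfrak{gl}_{n+1})$, and matches the resulting formula term-by-term with the polynomial representation of $\mathbb{E}_i$ from Lemma~\ref{lem:E-rep}.
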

The actions (1) and (3) make sense when we replace \eqref{eq:OGZ1} and \eqref{eq:OGZ2} with more general dimension vectors, as we'll discuss below, but (2) really depends on the identification with the universal enveloping algebra.  
\excise{arbitrary dimension vectors, that is for 
\[V=\Hom(\C^{v_1},\C^{v_2})\oplus \Hom(\C^{v_2},\C^{v_3})\oplus \cdots \Hom(\C^{v_{m-1}},\C^{v_{m}})\oplus \Hom(\C^{v_k},\C^n)\] for arbitrary $1\leq k\leq m$ and $(v_1,\dots, v_{m})$.  This corresponds to the quiver gauge theory 
\begin{equation*}
\tikz{
	\node[draw, thick, circle, inner sep=5pt,fill=white] (a) at (-6,0) {$v_1$};
	\node[draw, thick, circle, inner sep=5pt,fill=white] (b) at (-4,0) {$v_2$};	
		\node[draw, thick, circle, inner sep=5pt,fill=white] (d) at (0,0) {$v_k$};
		\node[draw, thick, circle, inner sep=5pt,fill=white] (f) at (4,0) {$v_m$};
	\node[draw, thick,inner sep=8pt] (s) at (0,2){$n$};
	\node[inner sep=12pt,fill=white] (c) at (-2,0){$\cdots$};
	\node[inner sep=12pt,fill=white] (e) at (2,0){$\cdots$};
		\draw[thick] (a) -- (b) ;
		\draw[thick] (b) -- (c) ;
		\draw[thick] (c) -- (d) ;
		\draw[thick] (d) -- (e) ;
		\draw[thick] (e) -- (f) ;
		\draw[thick] (d) -- (s) ;
}
\end{equation*}}
Theorems \ref{thm:main} and \ref{thm:action} extend essentially without change to the comparison of the equivariant derived category and $\bT$, and the match of the categorical $\mathfrak{sl}_\infty$-actions.  
The extension of the action (2) will require more effort, though it should be possible in some cases where the quantum Coulomb branch is a finite W-algebra using Brundan and Kleshchev's definition of translation functors for W-algebras in \cite[\S 4.4]{BKrsy}.

Finally, we discuss the slightly tangled relationship of this construction to previous work relating diagrammatic categories and the representation theory of Lie algebras.  

 \subsection*{Acknowledgments}
 J. G. was supported by NSERC and the University of Waterloo through an Undergraduate Student Research Award.  B. W. is supported by an NSERC Discovery Grant.   This research was supported in part by Perimeter
Institute for Theoretical Physics.
Research at Perimeter Institute is supported in part by the Government of Canada through the Department of Innovation, Science and Economic Development Canada and by the Province of Ontario through the Ministry of Colleges and Universities.
\section{Diagrammatic algebras}
\subsection{Algebras}

In this section, we introduce the algebraic construction that will unify the perspectives appearing here.  These algebras have appeared in many previous works of the author and collaborators \cite{WebCB, WebTGK,Webmerged,WebwKLR,KTWWYO,WebGT}, as well as the line of research of Khovanov, Lauda, Sussan and Yonezawa \cite{KSred,KLSY}:
\begin{definition}
  A {\bf KLRW diagram} is a collection of finitely many oriented curves in
  $\R\times [0,1]$ whose projection to the second factor is a diffeomorphism. Each curve is either
  \begin{itemize}
  \item colored red and labeled with the integer $\rankp$ and decorated with finitely many dots;  
  \item colored black and labeled with $i\in [1,\rankm]$ and decorated with finitely many dots.  Let $v_i$ be the number of black strands with label $i$.
  \end{itemize}
 The diagram must be locally of the form \begin{equation*}
\begin{tikzpicture}[scale=.8]
\draw[very thick,postaction={decorate,decoration={markings,
    mark=at position .75 with {\arrow[scale=1.3]{<}}}}] (-4,0) +(-1,-1) -- +(1,1);
\draw[very thick,postaction={decorate,decoration={markings,
    mark=at position .75 with {\arrow[scale=1.3]{<}}}}](-4,0) +(1,-1) -- +(-1,1);

  \draw[very thick,postaction={decorate,decoration={markings,
    mark=at position .75 with {\arrow[scale=1.3]{<}}}}](0,0) +(-1,-1) -- +(1,1);
\draw[wei, very thick,postaction={decorate,decoration={markings,
    mark=at position .75 with {\arrow[scale=1.3]{<}}}}](0,0) +(1,-1) -- +(-1,1);

  \draw[wei,very thick,postaction={decorate,decoration={markings,
    mark=at position .75 with {\arrow[scale=1.3]{<}}}}](4,0) +(-1,-1) -- +(1,1);
\draw [very thick,postaction={decorate,decoration={markings,
    mark=at position .75 with {\arrow[scale=1.3]{<}}}}](4,0) +(1,-1) -- +(-1,1);

  \draw[very thick,postaction={decorate,decoration={markings,
    mark=at position .75 with {\arrow[scale=1.3]{<}}}}](7.5,0) +(0,-1) --  node
  [midway,circle,fill=black,inner sep=2pt]{}
  +(0,1);
    \draw[wei,postaction={decorate,decoration={markings,
    mark=at position .75 with {\arrow[scale=1.3]{<}}}}](10,0) +(0,-1) --  node
  [midway,circle,fill=red,inner sep=2pt]{}
  +(0,1);
\end{tikzpicture}
\end{equation*}
with each curve oriented in the negative direction.  In
particular, no red strands can ever cross.  Each curve must
meet both $y=0$ and $y=1$ at distinct points from the other curves.
\end{definition} 
Readers familiar with the conventions of \cite{WebTGK,Webmerged}, etc. might be surprised to see dots on red strands as well as black.  This corresponds to the ``canonical deformation'' discussed in \cite[\S 2.7]{WebwKLR} or the ``redotting'' of \cite{KSred}.  

We'll only consider KLRW diagrams up to isotopy. Since
the orientation on a diagram is clear, we typically won't draw it.

We call the lines $y=0,1$ the {\bf  bottom} and {\bf top} of the
diagram.  Reading across the bottom and top from left to right, we
obtain a sequence $\Bi=(i_1, \dots, i_V)$ of elements of $[1,\rankp]$ labelling both red and black strands, where $V$ is the total number of strands.

\begin{definition}
  Given KLRW diagrams $a$ and $b$, their {\bf composition} $ab$ is
  given by stacking $a$ on top of $b$ and attempting to join the
  bottom of $a$ and top of $b$. If the sequences
  from the bottom of $a$ and top of $b$ don't match, then the
  composition is not defined and by convention is 0, which is not a
  KLRW diagram, just a formal symbol.
\[
ab=
\begin{tikzpicture}[baseline,very thick,yscale=.5]
  \draw (-.5,-2) to[out=90,in=-90] node[below,at start]{$i$} (-1,-.8)
  to[out=90,in=-90](1,.2) to[out=90,in=-90] node[midway,circle,fill=black,inner sep=2pt]{}  (0,2);
  \draw (.5,-2) to[out=90,in=-90] node[below,at start]{$j$} (.5,0) to[out=90,in=-90] (1,2);
  \draw  (1,-2) to[out=90,in=-90] node[below,at start]{$i$} (-1,.8) to[out=90,in=-90] (.5,2);
  \draw[wei] (0,-2) to[out=90,in=-90] node[below,at start]{$\la_2$}
  (0,0) to[out=90,in=-90]
  (-.5,2);
  \draw[wei] (-1, -2) to[out=90,in=-90] node[below,at start]{$\la_1$}
  (-.5,-1) to[out=90,in=-90] (-1,0) to[out=90,in=-90] (-.5,1)
  to[out=90,in=-90]   (-1,2);
\end{tikzpicture}\qquad \qquad ba=0
\]
Fix a field $\K$ and let $\ttalg$ be the formal span over
$\K$ of KLRW diagrams (up to isotopy).  The composition law
induces an algebra structure on $\ttalg$.
\end{definition}
 Let $e(\Bi)$ be the unique
crossingless, dotless  diagram where the sequence at top and bottom are both $\Bi$.    

\begin{definition}  The {\bf degree} of a KLRW diagram is the sum over
  crossings and dots in the diagram of 
  \begin{itemize}
  \item$-\langle\al_i,\al_j\rangle$ for each crossing of a black strand
    labeled $i$ with one labeled $j$;
  \item $2$ for each dot on a red strand or a black
    strand;
  \item $\langle\al_i,\la\rangle=\la^i$ for each crossing of a
    black strand labeled $i$ with a red strand labeled $\la$.
  \end{itemize}
The degree of diagrams is additive under composition.  Thus, the
algebra $\ttalg$ inherits a grading from this degree function.
\end{definition}

Throughout, we fix integers $\rankp,n$ and $\chi$ an integral weight $(\chi_1,\dots, \chi_n)\in \Z^n$ such that $\chi_1\leq \cdots \leq \chi_n$.  \notation{$\chi$}{A weakly increasing sequence }Fix a dimension vector $\Bv=(v_1,\dots, v_{\rankp-1})\in \Z_{\geq 0}^{\rankp-1}$; by convention, we take $v_{\rankp}=n$. Consider the set $\Omega=\{(i,j) \mid i\in [1,\rankp], j\in [1,v_i]\}$.\notation{$\Omega$}{The indexing set $\Omega=\{(i,j) \mid i\in [1,\rankp], j\in [1,v_i]\}$ for bases of $\C^{v_i}$ for all $i$ or for black strands in a KLRW diagram.}
Let $\prec$ be a total order on $\Omega$ such that 
\begin{equation}
    (i,1)\preceq \cdots \preceq (i,v_i). \label{eq:order-1}
\end{equation}  
This is equivalent to choosing a word $\Bi =(i_1,\dots, i_{N})$ where $N=|\Omega|$ and $i_k=i$ for $v_i$ different indices $k$.  

We will want to weaken this definition a bit and allow $\preceq$ to be a total preorder (that is, a relation which is transitive and reflexive, but not necessarily anti-symmetric).  In this case, we have an induced equivalence relation $(i,k)\approx (j,\ell)$ if $(i,k)\preceq (j,\ell)$ and $(i,k)\succeq (j,\ell)$.  We assume that our preorder satisfies the condition that
\begin{equation}
    (i,k)\not\approx (j,\ell)\text{ whenever } i\neq j.\label{eq:order-2}
\end{equation}
We can still attach a word $\Bi$ to such a preorder; two equivalent elements give the same letter in the word $\Bi$, so it doesn't matter whether they have a chosen order.  We can thus think of a preorder as corresponding to a word in the generators with some subsets where the same letter appears multiple times together grouped together.  We can represent this within the word itself by replacing $(i,\dots, i)$ with $i^{(a)}$.  Thus, for our purposes $(3,2,2,3,1,3)$ and $(3,2^{(2)},3,1,3)$ are different words with different associated preorders.  Every such word has a unique {\bf totalization} satisfying \eqref{eq:order-1}.  
\begin{definition}
We say the preorder $\prec$ and word  $\Bi$ is $\chi$-parabolic if whenever $\chi_k=\chi_{k+1}$ then $(\rankp,k)\approx (\rankp,k+1)$. In particular, the corresponding appearances of $\rankp$  in $\Bi$ are consecutive.
\end{definition}

\begin{definition}
  Let $\bT$ be the quotient  of $\ttalg$ by
  the following local
  relations between KLRW diagrams.  We draw these below as black, but the same relations apply to red strands (always taken with the label $\rankp$):
\begin{equation*}\subeqn\label{first-QH}
    \begin{tikzpicture}[scale=.6,baseline]
      \draw[very thick,postaction={decorate,decoration={markings,
    mark=at position .2 with {\arrow[scale=1.3]{<}}}}](-4,0) +(-1,-1) -- +(1,1) node[below,at start]
      {$i$}; \draw[very thick,postaction={decorate,decoration={markings,
    mark=at position .2 with {\arrow[scale=1.3]{<}}}}](-4,0) +(1,-1) -- +(-1,1) node[below,at
      start] {$j$}; \fill (-4.5,.5) circle (3pt);
\node at (-2,0){=}; \draw[very thick,postaction={decorate,decoration={markings,
    mark=at position .8 with {\arrow[scale=1.3]{<}}}}](0,0) +(-1,-1) -- +(1,1)
      node[below,at start] {$i$}; \draw[very thick,postaction={decorate,decoration={markings,
    mark=at position .8 with {\arrow[scale=1.3]{<}}}}](0,0) +(1,-1) --
      +(-1,1) node[below,at start] {$j$}; \fill (.5,-.5) circle (3pt);
   \end{tikzpicture}
 \qquad 
    \begin{tikzpicture}[scale=.6,baseline]
      \draw[very thick,postaction={decorate,decoration={markings,
    mark=at position .2 with {\arrow[scale=1.3]{<}}}}](-4,0) +(-1,-1) -- +(1,1) node[below,at start]
      {$i$}; \draw[very thick,postaction={decorate,decoration={markings,
    mark=at position .2 with {\arrow[scale=1.3]{<}}}}](-4,0) +(1,-1) -- +(-1,1) node[below,at
      start] {$j$}; \fill (-3.5,.5) circle (3pt);
\node at (-2,0){=}; \draw[very thick,postaction={decorate,decoration={markings,
    mark=at position .8 with {\arrow[scale=1.3]{<}}}}](0,0) +(-1,-1) -- +(1,1)
      node[below,at start] {$i$}; \draw[very thick,postaction={decorate,decoration={markings,
    mark=at position .8 with {\arrow[scale=1.3]{<}}}}](0,0) +(1,-1) --
      +(-1,1) node[below,at start] {$j$}; \fill (-.5,-.5) circle (3pt);
      \node at (3,0){unless $i=j$};
    \end{tikzpicture}
  \end{equation*}
\begin{equation*}\subeqn\label{nilHecke-1}
    \begin{tikzpicture}[scale=.6,baseline]
      \draw[very thick,postaction={decorate,decoration={markings,
    mark=at position .2 with {\arrow[scale=1.3]{<}}}}](-4,0) +(-1,-1) -- +(1,1) node[below,at start]
      {$i$}; \draw[very thick,postaction={decorate,decoration={markings,
    mark=at position .2 with {\arrow[scale=1.3]{<}}}}](-4,0) +(1,-1) -- +(-1,1) node[below,at
      start] {$i$}; \fill (-4.5,.5) circle (3pt);
\node at (-2,0){$-$}; \draw[very thick,postaction={decorate,decoration={markings,
    mark=at position .8 with {\arrow[scale=1.3]{<}}}}](0,0) +(-1,-1) -- +(1,1)
      node[below,at start] {$i$}; \draw[very thick,postaction={decorate,decoration={markings,
    mark=at position .8 with {\arrow[scale=1.3]{<}}}}](0,0) +(1,-1) --
      +(-1,1) node[below,at start] {$i$}; \fill (.5,-.5) circle (3pt);
      \node at (1.8,0){$=$}; 
    \end{tikzpicture}\,\,
\begin{tikzpicture}[scale=.6,baseline]
      \draw[very thick,postaction={decorate,decoration={markings,
    mark=at position .8 with {\arrow[scale=1.3]{<}}}}](-4,0) +(-1,-1) -- +(1,1) node[below,at start]
      {$i$}; \draw[very thick,postaction={decorate,decoration={markings,
    mark=at position .8 with {\arrow[scale=1.3]{<}}}}](-4,0) +(1,-1) -- +(-1,1) node[below,at
      start] {$i$}; \fill (-4.5,-.5) circle (3pt);
\node at (-2,0){$-$}; \draw[very thick,postaction={decorate,decoration={markings,
    mark=at position .2 with {\arrow[scale=1.3]{<}}}}](0,0) +(-1,-1) -- +(1,1)
      node[below,at start] {$i$}; \draw[very thick,postaction={decorate,decoration={markings,
    mark=at position .2 with {\arrow[scale=1.3]{<}}}}](0,0) +(1,-1) --
      +(-1,1) node[below,at start] {$i$}; \fill (.5,.5) circle (3pt);
      \node at (2,0){$=$}; \draw[very thick,postaction={decorate,decoration={markings,
    mark=at position .5 with {\arrow[scale=1.3]{<}}}}](4,0) +(-1,-1) -- +(-1,1)
      node[below,at start] {$i$}; \draw[very thick,postaction={decorate,decoration={markings,
    mark=at position .5 with {\arrow[scale=1.3]{<}}}}](4,0) +(0,-1) --
      +(0,1) node[below,at start] {$i$};
    \end{tikzpicture}
  \end{equation*}
\begin{equation*}\subeqn\label{black-bigon}
    \begin{tikzpicture}[very thick,scale=.8,baseline]
      \draw[postaction={decorate,decoration={markings,
    mark=at position .55 with {\arrow[scale=1.3]{<}}}}] (-2.8,0) +(0,-1) .. controls (-1.2,0) ..  +(0,1)
      node[below,at start]{$i$}; \draw[postaction={decorate,decoration={markings,
    mark=at position .55 with {\arrow[scale=1.3]{<}}}}] (-1.2,0) +(0,-1) .. controls
      (-2.8,0) ..  +(0,1) node[below,at start]{$j$}; 
   \end{tikzpicture}=\quad
   \begin{cases}
0 & i=j\\
     \begin{tikzpicture}[very thick,yscale=.6,xscale=.8,baseline=-3pt]
       \draw[postaction={decorate,decoration={markings,
    mark=at position .5 with {\arrow[scale=1.3]{<}}}}] (2,0) +(0,-1) -- +(0,1) node[below,at start]{$j$};
       \draw[postaction={decorate,decoration={markings,
    mark=at position .5 with {\arrow[scale=1.3]{<}}}}] (1,0) +(0,-1) -- +(0,1) node[below,at start]{$i$};
     \end{tikzpicture} & i\neq j,j\pm 1\\
   \begin{tikzpicture}[very thick,yscale=.6,xscale=.8,baseline=-3pt]
       \draw[postaction={decorate,decoration={markings,
    mark=at position .8 with {\arrow[scale=1.3]{<}}}}] (2,0) +(0,-1) -- +(0,1) node[below,at start]{$j$};
       \draw[postaction={decorate,decoration={markings,
    mark=at position .8 with {\arrow[scale=1.3]{<}}}}] (1,0) +(0,-1) -- +(0,1) node[below,at start]{$i$};\fill (2,0) circle (4pt);
     \end{tikzpicture}-\begin{tikzpicture}[very thick,yscale=.6,xscale=.8,baseline=-3pt]
       \draw[postaction={decorate,decoration={markings,
    mark=at position .8 with {\arrow[scale=1.3]{<}}}}] (2,0) +(0,-1) -- +(0,1) node[below,at start]{$j$};
       \draw[postaction={decorate,decoration={markings,
    mark=at position .8 with {\arrow[scale=1.3]{<}}}}] (1,0) +(0,-1) -- +(0,1) node[below,at start]{$i$};\fill (1,0) circle (4pt);
     \end{tikzpicture}& i=j-1\\
  \begin{tikzpicture}[very thick,baseline=-3pt,yscale=.6,xscale=.8]
       \draw[postaction={decorate,decoration={markings,
    mark=at position .8 with {\arrow[scale=1.3]{<}}}}] (2,0) +(0,-1) -- +(0,1) node[below,at start]{$j$};
       \draw[postaction={decorate,decoration={markings,
    mark=at position .8 with {\arrow[scale=1.3]{<}}}}] (1,0) +(0,-1) -- +(0,1) node[below,at start]{$i$};\fill (1,0) circle (4pt);
     \end{tikzpicture}-\begin{tikzpicture}[very thick,yscale=.6,xscale=.8,baseline=-3pt]
       \draw[postaction={decorate,decoration={markings,
    mark=at position .8 with {\arrow[scale=1.3]{<}}}}] (2,0) +(0,-1) -- +(0,1) node[below,at start]{$j$};
       \draw[postaction={decorate,decoration={markings,
    mark=at position .8 with {\arrow[scale=1.3]{<}}}}] (1,0) +(0,-1) -- +(0,1) node[below,at start]{$i$};\fill (2,0) circle (4pt);
     \end{tikzpicture}& i=j+1
   \end{cases}
  \end{equation*}
 \begin{equation*}\subeqn\label{triple-dumb}
    \begin{tikzpicture}[very thick,scale=.8,baseline=-3pt]
      \draw[postaction={decorate,decoration={markings,
    mark=at position .2 with {\arrow[scale=1.3]{<}}}}] (-2,0) +(1,-1) -- +(-1,1) node[below,at start]{$k$}; \draw[postaction={decorate,decoration={markings,
    mark=at position .8 with {\arrow[scale=1.3]{<}}}}]
      (-2,0) +(-1,-1) -- +(1,1) node[below,at start]{$i$}; \draw[postaction={decorate,decoration={markings,
    mark=at position .5 with {\arrow[scale=1.3]{<}}}}]
      (-2,0) +(0,-1) .. controls (-3,0) ..  +(0,1) node[below,at
      start]{$j$}; \node at (-.5,0) {$-$}; \draw[postaction={decorate,decoration={markings,
    mark=at position .8 with {\arrow[scale=1.3]{<}}}}] (1,0) +(1,-1) -- +(-1,1)
      node[below,at start]{$k$}; \draw[postaction={decorate,decoration={markings,
    mark=at position .2 with {\arrow[scale=1.3]{<}}}}] (1,0) +(-1,-1) -- +(1,1)
      node[below,at start]{$i$}; \draw[postaction={decorate,decoration={markings,
    mark=at position .5 with {\arrow[scale=1.3]{<}}}}] (1,0) +(0,-1) .. controls
      (2,0) ..  +(0,1) node[below,at start]{$j$}; \end{tikzpicture}=\quad
      \begin{cases} 
    \begin{tikzpicture}[very thick,yscale=.6,xscale=.8,baseline=-3pt]
     \draw[postaction={decorate,decoration={markings,
    mark=at position .5 with {\arrow[scale=1.3]{<}}}}] (6.2,0)
      +(1,-1) -- +(1,1) node[below,at start]{$k$}; \draw[postaction={decorate,decoration={markings,
    mark=at position .5 with {\arrow[scale=1.3]{<}}}}] (6.2,0)
      +(-1,-1) -- +(-1,1) node[below,at start]{$i$}; \draw[postaction={decorate,decoration={markings,
    mark=at position .5 with {\arrow[scale=1.3]{<}}}}] (6.2,0)
      +(0,-1) -- +(0,1) node[below,at
      start]{$j$};     \end{tikzpicture}& i=k=j+1\\
    -\begin{tikzpicture}[very thick,yscale=.6,xscale=.8,baseline]
     \draw[postaction={decorate,decoration={markings,
    mark=at position .5 with {\arrow[scale=1.3]{<}}}}] (6.2,0)
      +(1,-1) -- +(1,1) node[below,at start]{$k$}; \draw[postaction={decorate,decoration={markings,
    mark=at position .5 with {\arrow[scale=1.3]{<}}}}] (6.2,0)
      +(-1,-1) -- +(-1,1) node[below,at start]{$i$}; \draw[postaction={decorate,decoration={markings,
    mark=at position .5 with {\arrow[scale=1.3]{<}}}}] (6.2,0)
      +(0,-1) -- +(0,1) node[below,at
      start]{$j$};     \end{tikzpicture}& i=k=j-1\\
     0& \text{otherwise}
      \end{cases}
  \end{equation*}
\end{definition}
Let $e_\chi$ be the idempotent given by summing $e(\Bi)$ for all $\chi$-parabolic total orders satisfying \eqref{eq:order-1}.  For a non-total order satisfying \eqref{eq:order-1} and \eqref{eq:order-2}, we associate the ``divided power''  idempotent $e'(\Bi)$ which acts on each  equivalence class of consecutive strands by a primitive idempotent in the nilHecke algebra (for example, that introduced in \cite[2.18]{KLMS}).  So for example, we associate to $(3,2^{(2)},3,1,3)$ the idempotent
\[\begin{tikzpicture}[baseline,very thick,yscale=.5]
  \draw[wei] (-1, -1) to[out=90,in=-90] node[below,at start]{$3$}   (-1,1);
  \draw (-.5,-1) to[out=90,in=-90]   node[below,at start]{$2$} (0,1);
  \draw (0,-1) to[out=90,in=-90] node[below,at start]{$2$} node[pos=.8,circle,fill=black,inner sep=2pt]{} (-.5,1);
  \draw[wei] (.5,-1) to[out=90,in=-90] node[below,at start]{$3$}  (.5,1);
  \draw  (1,-1) to[out=90,in=-90] node[below,at start]{$1$}(1,1);
  \draw[wei] (1.5,-1) to[out=90,in=-90] node[below,at start]{$3$}  (1.5,1);
\end{tikzpicture}\]

Let $S_\chi$\notation{$S_\chi$}{The stabilizer of $\chi$ in $S_n$} be the stabilizer of $\chi$ in $S_n$; this naturally acts on the subalgebra $e_\chi \bT e_\chi$ by permuting groups of red strands (or equivalently, dots on those red strands).  
\begin{definition}
  Let \notation{$\bT^\chi$}{A deformation of the usual KLRW algebra.}$\bT^\chi=(e_\chi \bT e_{\chi})^{S_\chi}$ be the invariants of $S_\chi$ acting on the subalgebra $e_\chi \bT e_{\chi}$.
\end{definition} 
This is a canonical deformation of the algebra $\tilde{T}^{\bla}$ of \cite{Webmerged} attached to the sequence of dominant weights $\bla=(g_1\omega_{\rankm},\dots, g_k\omega_{\rankm})$ where $g_1,\dots, g_k$ are the sizes of the blocks of consecutive equal entries in $\chi$, i.e. $S_\chi=S_{g_1}\times \cdots\times  S_{g_k}$.  

Note that this algebra breaks up into a sum of subalgebras where we fix the number of strands with each label; as usual, we let $v_i$ denote the number with label $i$.  We'll be particularly interested in the case when $v_1\leq  v_2\leq \dots \leq v_m$. This is the condition that the corresponding weight of $\mathfrak{sl}_m$ is dominant; in our usual correspondence, it corresponds to the $n$-tuple $\nu=(1,\dots, 1,2,\dots, 2,\dots)$ where $i$ appears $v_i-v_{i-1}$ times.  
This same algebra is considered in \cite[\S 4]{KLSY} in the case $\rankp=2$ and is denoted $W(\mathbf{g},v_1)$ (using our $g_*$ and $v_*$ as above).

From its realization as a weighted KLR algebra, the algebra $\bT$ inherits a polynomial representation. 
\begin{definition}\label{def:poly-rep}
  The polynomial representation of $\bT$  is the vector space 
  \begin{equation*}
      \poly=\bigoplus_{\Bi}\K[Y_1, \dots, Y_{V}]e({\Bi}),
  \end{equation*}  with sum running over total orders on $\Omega$ satisfying \eqref{eq:order-1}.  The action is given by the rules:
\begin{itemize}
    \item $e({\Bi})$ acts by projection to the corresponding summand,
    \item a dot on the $k$th strand from the left acts by multiplication by $Y_k$,
    \item a crossing of the $k$th and $k+1$st strands with $\Bi$ at the bottom and $\Bi'$ at top acts by
    \begin{itemize}
        \item  If $i_k=i_{k+1}$, the divided difference operator \[fe_{\Bi}\mapsto \frac{f^{(k,k+1)}-f}{Y_{k+1}-Y_k}e_{\Bi'}.\]
        \item If $i_k+1=i_{k+1}$, the permutation $(k,k+1)$ followed by a multiplication \[fe_{\Bi}\mapsto (Y_{k+1}-Y_k) f^{(k,k+1)}e_{\Bi'}.\]    
        \item Otherwise, the permutation $(k,k+1)$  \[fe_{\Bi}\mapsto f^{(k,k+1)}e_{\Bi'}.\]
    \end{itemize}
\end{itemize}
The polynomial representation $\poly^\chi$ for $\bT^\chi$ is given by $(e_\chi \poly)^{S_\chi}$ where $S_\chi$ acts by permuting red dots as usual.  
\end{definition}

\subsection{Violating quotients}

\begin{definition}
  We call an idempotent $e(\Bi)$ {\bf violating} if $i_1\neq \rankp$; that is, if $(\rankp,1)$ is not minimal in $\prec$.
Let $\vT^\chi$\notation{$\vT^\chi$}{The quotient of $\bT^\chi$ by the 2-sided ideal generated by all violating idempotents.} be the quotient of $\bT^\chi$ by the 2-sided ideal generated by all violating idempotents.
\end{definition}

The algebra $\vT^\chi$ is not precisely the algebra $\bvT^\chi$ defined in \cite{Webmerged}, but a deformation of it which we've considered in several contexts, in particular, in \cite[\S 4]{Webunfurl}. This deformation is flat, since it is a special case of deforming the polynomials defining the KLRW algebra (as discussed in \cite[Prop. 2.23]{WebwKLR}); in the case $\rankp=2$, this is the redotted algebra discussed by Khovanov-Sussan in \cite{KSred}.  The algebra $\bvT^\chi$ is the quotient of $\vT^\chi$ by all red dots.
Since the red dots are central, and the polynomial ring is graded local, every gradable simple $\vT^\chi$-module factors through $\bvT^\chi$, and so the Grothendieck group of $\bvT^\chi\wgmod$ agrees with the Grothendieck group of  $\vT^\chi\wgmod$.  From \cite[\S 4]{Webunfurl}, we have:
\begin{theorem}
	The categories of $\vT^\chi$-modules and $\bvT^\chi$-modules are categorifications of $\Sym^{\mathbf{g}}(\C^m)$, with the categorical $\mathfrak{sl}_m$-action given by induction and restriction functors changing the number of black strands.  
\end{theorem}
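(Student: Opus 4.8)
The plan is to deduce this from Webster's tensor product categorification theorem \cite[\S 4]{Webmerged}, using that $\bvT^\chi$ is precisely his (undeformed) algebra $T^\bla$, together with the flatness of the present deformation; I would not try to build the categorical $\mathfrak{sl}_m$-action from scratch. The first step reduces $\vT^\chi$ to $\bvT^\chi$. As recorded just above, the red dots of $\vT^\chi$ are central of positive degree and the polynomial ring they generate is graded local, so every gradable simple $\vT^\chi$-module is killed by all red dots and hence factors through $\bvT^\chi$, while every simple $\bvT^\chi$-module inflates to a gradable simple of $\vT^\chi$. Consequently $K^0(\vT^\chi\wgmod)\cong K^0(\bvT^\chi\mmod)$ canonically; and since the induction and restriction functors on the two sides are given by the same diagrammatic bimodules --- adding, resp.\ removing, a black strand labeled $i$ at the far right --- which are manifestly compatible with the quotient map $\vT^\chi\twoheadrightarrow\bvT^\chi$, this isomorphism intertwines the two categorical actions. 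So it is enough to prove the statement for $\bvT^\chi$.

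For $\bvT^\chi$ the statement is Webster's theorem. One identifies $\bvT^\chi$, as a graded algebra with its induction/restriction functors, with $T^\bla$ of \cite[\S 4]{Webmerged} for $\bla=(g_1\om_{\rankm},\dots,g_k\om_{\rankm})$, where $g_1,\dots,g_k$ are the block sizes of $\chi$: the $S_\chi$-invariants-with-divided-powers packaging of $e_\chi\bT e_\chi$ (with red dots killed) is exactly the weighted-KLR presentation of a row of thick red strands labeled by the dominant weights $g_j\om_{\rankm}$. By \cite{Webmerged}, $T^\bla\gmod$ has Grothendieck group isomorphic, as a $U_q(\mathfrak{sl}_m)$-module, to $V_{g_1\om_{\rankm}}\otimes\cdots\otimes V_{g_k\om_{\rankm}}$, with the categorical action (in the sense of Rouquier and Khovanov--Lauda) carried by the functors of adding and removing black strands, and with standard modules and indecomposable projectives realizing the standard monomial and dual canonical bases. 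Since $V_{g\om_{\rankm}}\cong\Sym^g(\C^m)$ as $\mathfrak{sl}_m$-modules (at $q=1$), this tensor product is $\Sym^{\mathbf{g}}(\C^m)$ and the functors are precisely the induction and restriction functors of the theorem, which completes the proof of the $K^0$-level statement.

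For the sharper assertion that the categorical $\mathfrak{sl}_m$-action is realized by honest functors on the deformed category --- which is the content of \cite[\S 4]{Webunfurl} --- one uses that the deformation is flat: by \cite[Prop.\ 2.23]{WebwKLR}, $\vT^\chi$ is free over the polynomial ring in the red dots with fiber $\bvT^\chi$, so the decomposition numbers, standard modules, and projective covers all deform flatly, and the bimodules defining the induction and restriction functors together with the $2$-morphisms witnessing the categorical relations among them --- which already live in $\bT$ modulo the deformation-independent relations \eqref{first-QH}--\eqref{triple-dumb} --- descend unchanged to $\vT^\chi$ and $\bvT^\chi$ and agree under the quotient. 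I expect the only genuine technical point to be exactly this flatness/non-degeneracy input, that imposing the deformed polynomial relations does not collapse the algebra; once \cite[Prop.\ 2.23]{WebwKLR} is granted, the remainder is bookkeeping --- matching the labelling of simples across the two descriptions, verifying compatibility of the induction and restriction bimodules with the quotient maps, and unwinding the identification $V_{g\om_{\rankm}}\cong\Sym^g(\C^m)$.
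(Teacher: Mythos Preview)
Your proposal is correct and aligns with the paper's approach: the paper does not prove this theorem at all but simply cites it as following from \cite[\S 4]{Webunfurl}, and your sketch correctly unpacks what that citation entails---the reduction from $\vT^\chi$ to $\bvT^\chi$ via the nilpotence of the positively-graded central red dots (exactly as the paragraph preceding the theorem records), the identification of $\bvT^\chi$ with $T^\bla$ for $\bla=(g_1\omega_{\rankm},\dots,g_k\omega_{\rankm})$, and the appeal to the main categorification theorem of \cite{Webmerged} together with flatness from \cite[Prop.~2.23]{WebwKLR} for the deformed statement. One small point: $V_{g\omega_{\rankm}}$ is literally $\Sym^g((\C^m)^*)$ rather than $\Sym^g(\C^m)$, but the paper's phrasing is already loose on this dual and it makes no structural difference.
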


\subsection{Ladder bimodules}
In our notation, we identify the dominant weight $\chi$ with a weight of  $\mathfrak{sl}_\infty$ by $\mu_{\chi}=\sum_{i=1}^n \epsilon_{\chi_i}$.  This is an injective map, but is far from surjective, since it only hits weights where the coefficients of the $\ep_i$'s are positive and sum to $n$ (in the usual parlance, they are level $n$).  In particular, \[\al_j^\vee(\mu_{\chi})=\#\{j\mid \chi_j=i+1\}-\#\{j \mid \chi_j=i\}.\] 
 \begin{definition}
 \notation{$\chi^{\pm i}$}{The sequence $\chi$ with an entry $i$ changed to $i\pm 1$ if such a dominant weight exists.}Let $\chi^{+ i}$ denote $\chi$ with an entry $i$ increased to $i+1$ if such a dominant weight exists, and $\chi^{-i}$ denote $\chi$ with an entry $i+1$ decreased to $i$ if such a dominant weight exists. Let $\chi^{\pm i^a}$ be the result of doing this operation $a$ times.
\end{definition} 
These operations are uniquely characterized by the fact that:
\begin{lemma}
If $\mu_{\chi^{\pm i}}$ and $\mu_{\chi}$ exist, then $\mu_{\chi^{\pm i}}=\mu_{\chi}\pm\al_i$.  
\end{lemma}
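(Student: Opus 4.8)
The plan is simply to unwind the definitions: the statement is a bookkeeping identity about the multiset of entries of $\chi$. Recall that $\mu_{\chi}=\sum_{k=1}^{n}\epsilon_{\chi_k}$, so the coefficient of $\epsilon_j$ in $\mu_\chi$ is the number of entries of $\chi$ equal to $j$, and that in our normalization the $i$-th simple root of $\mathfrak{sl}_\infty$ is $\al_i=\epsilon_{i+1}-\epsilon_i$ (this is the sign convention under which the displayed identity $\al_i^\vee(\mu_\chi)=\#\{k\mid\chi_k=i+1\}-\#\{k\mid\chi_k=i\}$ holds).

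First I would treat $\chi^{+i}$. By definition it exists precisely when $i$ occurs among the entries of $\chi$, and then $\chi^{+i}$ is the increasing tuple whose underlying multiset is obtained from that of $\chi$ by replacing one copy of $i$ with a copy of $i+1$. Summing the $\epsilon$'s over this multiset gives
\[
\mu_{\chi^{+i}}=\mu_\chi-\epsilon_i+\epsilon_{i+1}=\mu_\chi+\al_i .
\]
The argument for $\chi^{-i}$ is identical with the roles of $i$ and $i+1$ exchanged: its multiset is that of $\chi$ with a copy of $i+1$ replaced by a copy of $i$, so $\mu_{\chi^{-i}}=\mu_\chi+\epsilon_i-\epsilon_{i+1}=\mu_\chi-\al_i$. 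Iterating $a$ times gives $\mu_{\chi^{\pm i^a}}=\mu_\chi\pm a\al_i$, the corresponding statement for the iterated (divided-power) operations.

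I would also record why this pins the operations down, as the surrounding sentence claims. The map $\chi\mapsto\mu_\chi$ is injective (as noted in the excerpt), with image the set of weights $\sum_j a_j\epsilon_j$ with all $a_j\in\Znn$ and $\sum_j a_j=n$. A weight of the form $\mu_\chi\pm\al_i$ determines every coefficient $a_j$, hence the multiset of entries, hence the sorted tuple; so whenever $\mu_\chi\pm\al_i$ lies in this image there is exactly one $n$-tuple with that weight, and it must be $\chi^{\pm i}$, while if $\mu_\chi\pm\al_i$ is not in the image the operation is undefined. Thus $\chi^{\pm i}$ is characterized by the identity of the lemma together with the requirement that the output again be an increasing (dominant) tuple.

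There is no real obstacle here; the content is purely combinatorial. The only point demanding care is the consistency of the labeling and sign convention for $\al_i$: the conclusion is stated for the normalization matching the displayed formula for $\al_i^\vee(\mu_\chi)$, and under the opposite convention the two symbols $\chi^{+i}$ and $\chi^{-i}$ in the conclusion would simply be interchanged.
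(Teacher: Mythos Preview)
Your proof is correct and follows essentially the same bookkeeping approach as the paper: both simply track how the multiset of entries of $\chi$ changes under the operation. Your argument is in fact slightly more direct, since you work immediately in the $\epsilon$-basis (one $\epsilon_i$ is traded for an $\epsilon_{i+1}$), whereas the paper verifies the identity by computing the pairing $\al_j^\vee(\mu_{\chi^{+i}})$ for each $j$ and then deduces the $\chi^{-i}$ case from the relation $(\chi^{+i})^{-i}=\chi$; your symmetric treatment of the two cases and your explicit justification of the uniqueness claim are welcome additions.
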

\begin{proof}
If $\chi^{+ i}$ exists, then for some $k$, we have  $i=\chi_k<\chi_{k+1}$.  In this case, the dominant weight \[\chi^{+ i}=\chi+\ep_k=(\chi_1,\dots, \chi_k+1,\dots,\chi_n)\] satisfies
\[\al_j^\vee(\chi+\ep_k)=\begin{cases} \al_j^\vee(\mu_{\chi})-1 & j=\chi_k\pm 1,\\
\al_j^\vee(\mu_{\chi})+2 & j=\chi_k,\\
\al_j^\vee(\mu_{\chi}) & \text{otherwise}
\end{cases}
\]
Thus, we have that $\mu_{\chi+\ep_k}=\mu_\chi+\al_{i}$ as desired.  The second half of the result follows from the fact that $(\chi^{+ i})^{-i}=\chi$.
\end{proof}

Assume that $i$ appears at least $a$ times in $\chi$.  Let $\chi'=\chi^{+i^a}$ be the dominant weight obtained by changing $a$ instances of $i$ to $i+1$.  Let $S_{\chi,\chi'}=S_{\chi}\cap S_{\chi'}$.  
\begin{definition}
  The {\bf ladder bimodules} are the subspace \notation{$\mathbb{E}_i^{(a)}, \mathbb{F}_i^{(a)}$}{The ladder bimodules defining a categorical action.}$\mathbb{E}_i^{(a)}=(e_\chi \bT e_{\chi'})^{S_{\chi,\chi'}}$ considered as a $\bT^{\chi'}$-$\bT^\chi$ bimodule, and similarly $\mathbb{F}_i^{(a)}=(e_{\chi'} \bT e_\chi)^{S_{\chi,\chi'}}$ simply swaps the roles of $\chi$ and $\chi'$.
\end{definition}
These are generalizations of the ladder bimodules defined in \cite[\S 5.2]{KLSY}.
We draw these by pinching together the red strands in a single equivalence class under the $\chi$-parabolic preorder giving an idempotent into a single red strand.  Thus, elements of this bimodule look like:
\begin{equation}
    \tikz[baseline, yscale=1.2]{\draw[wei] (-2,-1) to[out=90,in=-90] (-2,1);\draw[very thick] (-1.5,-1)to[out=90,in=-90](.5,1);\draw[very thick] (-1,-1)to[out=90,in=-90] node [pos=.7,circle,fill=black,inner sep=2pt] {} (-1.5,1);\draw[very thick] (0,-1)--(0,1);\draw[very thick] (.5,-1)to[out=90,in=-90](1.5,1);
    \draw[wei] (-.5,-.85) to[out=90,in=-90] node [pos=.7,circle,fill=red,inner sep=2pt] {} (1,.85) ;\draw[wei] (1,-1)--(1,1); \draw[wei] (-.5,-1)--(-.5,1);\draw[very thick] (1.5,-1)to[out=90,in=-90](-1,1);\draw[wei] (2,-1)--(2,1); }\label{eq:ladder}
\end{equation}

These bimodules have a ``representation'' as well.  By a representation of a bimodule $B$ over algebras $A_1$ and $A_2$, we mean a representation of the Morita context:
\[A_B=\begin{bmatrix} A_1 & B \\ 0& A_2\end{bmatrix}\]
with the obvious matrix multiplication.  That is, a left module $V_i$ of $A_i$, and a bimodule map $B\to \Hom_{\K}(V_2, V_1)$.  In our case, we will use the polynomial representations $\poly^\chi$ of $\bT^\chi$.  Diagrams other than the split and join of red strands act by the formulas in Definition \ref{def:poly-rep}.  The formulas for splitting and joining are the same as in  \cite{SWschur}:
\begin{itemize}
    \item split corresponds to the inclusion $\poly^\chi \hookrightarrow \poly^{\chi,\chi'}$, where the latter is the invariants under $S_{\chi,\chi'}\subset S_{\chi}$.
    \item merge corresponds to the divided difference operator $\poly^{\chi,\chi'}\to \poly^{\chi'}$ given by 
    \[f\mapsto \sum_{\sigma\in S_{\chi'}/S_{\chi,\chi'}}\frac{\displaystyle f^{\sigma}}{\Delta_{+i^{(a)}}^\sigma},\]
    where $\Delta_{+i^{(a)}}$ is the product of $Y_k-Y_\ell$ where $k$ ranges over the red strands with $\la_{n,j}=i+1$, and $\ell$ over the red strands in the ``rung'' of the ladder.  This corresponds to the operator in equivariant cohomology which integrates a $P_{\chi,\chi'}$-equivariant class over $P_{\chi'}/P_{\chi,\chi'}$ to give a $P_{\chi'}$-equivariant class.  In terms of the nilHecke algebra, this corresponds to the diagram:
    \[\tikz[very thick]{\draw (-3,-1) -- (1,1);\draw (-1,-1) -- (3,1);\draw (3,-1) -- (-1,1);\draw (1,-1) -- (-3,1); \node at (1.4,.7){$\cdots$};\node at (-1.4,.7){$\cdots$};\node at (1.4,-.7){$\cdots$};\node at (-1.4,-.7){$\cdots$};}\]
\end{itemize}
\begin{lemma}\label{lem:E-rep}
The formulas above define a representation of the bimodule $\mathbb{E}_i^{(a)}$.
\end{lemma}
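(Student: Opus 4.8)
The statement is a verification that an explicit list of operators on the polynomial modules $\poly^\chi$ and $\poly^{\chi'}$ (write $\chi'=\chi^{+i^{(a)}}$) respects all the defining relations of the Morita context $A_{\mathbb{E}_i^{(a)}}$, so the plan is a relation check, organized according to which relations involve the new split and merge maps. Since $\mathbb{E}_i^{(a)}=(e_\chi\bT e_{\chi'})^{S_{\chi,\chi'}}$ is cut out of the quotient $\bT$ of the free diagram algebra $\ttalg$, and since $\poly$ is a representation of $\bT$ (Definition \ref{def:poly-rep}), it will be enough to check: (a) that the operators descend through the local relations \eqref{first-QH}--\eqref{triple-dumb} and are consistent with the passage to $S_\chi$-invariants and $S_{\chi'}$-invariants built into $\bT^\chi$ and $\bT^{\chi'}$; (b) that the split and merge operators are mutually compatible (the ``thick red strand'' relations); and (c) that left multiplication by $\bT^{\chi'}$ and right multiplication by $\bT^\chi$ are respected. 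Item (c) is automatic once (a) and (b) hold, since those actions are realized by gluing diagrams onto the top and bottom of an element of $\mathbb{E}_i^{(a)}$.

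First I would dispose of everything not involving a split or a merge. Unpinching a thick red strand turns it into a block of ordinary red strands of label $\rankp$ carrying a primitive nilHecke idempotent in the red variables of that block; hence isotopy invariance and the relations \eqref{first-QH}--\eqref{triple-dumb}---including their red instances---hold verbatim, because they already hold in $\poly$. The only genuinely new decoration is a dot on a red strand, but such a dot acts by multiplication by a single variable $Y_k$, which commutes with every crossing operator of Definition \ref{def:poly-rep} that does not involve that strand; the ``redotting'' relations are precisely the red instances of \eqref{first-QH}, so they are inherited as well. This reduces the whole problem to the interaction of the split and merge maps with each other, with dots, and with black strands passing through.

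For those relations, the key point is that the split is the inclusion $\poly^\chi\hookrightarrow\poly^{\chi,\chi'}$ of invariants for $S_{\chi,\chi'}\subseteq S_\chi$, and the merge is the push-forward operator $f\mapsto\sum_{\sigma\in S_{\chi'}/S_{\chi,\chi'}} f^\sigma/\Delta_{+i^{(a)}}^\sigma$; these are, verbatim, the operators used in \cite{SWschur} to build the analogous split/merge bimodules for the undeformed KLRW algebra of $\mathfrak{sl}_{\rankp}$. So the thick-calculus relations---associativity of iterated splits, the relations relating a split composed with a merge (and a merge composed with a split) to the displayed nilHecke picture, and naturality of split/merge past dots---follow from the same computations as there, which ultimately reduce to standard identities for divided-difference operators and to the projection formula for the fibration $P_{\chi'}/P_{\chi,\chi'}$. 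The red dots cause no trouble: both split and merge are morphisms of modules over the ring of symmetric polynomials in the red variables, so a red dot on either side of a split or a merge slides across it exactly as in the undeformed setting; this being a purely formal check, no finiteness or flatness hypothesis is used.

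The main obstacle, I expect, is the bookkeeping of how black strands are dragged through a split or a merge. A crossing of a black strand with a red strand acts, depending on the side, either by a transposition or by a transposition followed by multiplication by $Y_{k+1}-Y_k$ (the rules of Definition \ref{def:poly-rep}), so pulling a black strand across the rung of a ladder produces a product of linear forms that must be reconciled---on one side with the crossing operator for the whole rung strand, on the other with the denominators $\Delta_{+i^{(a)}}^\sigma$ of the merge---for the two orders of operations to agree. I anticipate this reducing to the same identity between a divided-difference operator and multiplication by a product of linear forms that underlies the corresponding lemma of \cite{SWschur}, proved by expanding the push-forward over a chosen set of coset representatives for $S_{\chi'}/S_{\chi,\chi'}$ and matching terms; once this is settled, the bimodule-map property follows formally and the lemma is proved.
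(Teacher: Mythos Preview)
Your approach is a direct relation check: you propose to verify that the operators assigned to split, merge, dots, and crossings satisfy every defining relation of a thick-calculus presentation, reducing the new content to divided-difference identities from \cite{SWschur}. The paper's proof is quite different and considerably shorter: it never checks a single relation. Instead, it observes that $\mathbb{E}_i^{(a)}$ is \emph{by definition} a subspace of $e_{\chi'}\bT e_\chi$, so any representation of $\bT$ restricts to one of the bimodule automatically. The only thing to verify is that the restricted action of the polynomial representation $\poly$ of $\bT$ agrees with the stated split and merge formulas. For this the paper notes that the ladder picture corresponds to ``expanding'' the pinched red strands back into thin strands of label $\rankp$; split then becomes a trivial inclusion, and merge becomes the nilHecke half-twist on the red strands that join---whose action in the polynomial representation is exactly the divided-difference push-forward you wrote down. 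Thus no thick-calculus relations, no dragging of black strands through rungs, and no case analysis are needed: everything is inherited from Definition~\ref{def:poly-rep}. Your route would work, but it presupposes a generators-and-relations presentation of $\mathbb{E}_i^{(a)}$ (via split/merge) which the paper never states and would itself require justification; the paper sidesteps this entirely by working with the subspace definition.
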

\begin{proof}
Here, we use the fact that $\mathbb{E}_i$ by definition is the subbimodule of $e_{\chi'}\bT e_{\chi}$ invariant under $S_{\chi,\chi'}$.  This embedding corresponds to taking a diagram as in \eqref{eq:ladder}, and simply expanding red strands.
\begin{equation*}
    \tikz[baseline, yscale=1.2]{\draw[wei] (-2,-1) to[out=90,in=-90] (-2,1);\draw[very thick] (-1.5,-1)to[out=90,in=-90](.5,1);\draw[very thick] (-1,-1)to[out=90,in=-90] node [pos=.7,circle,fill=black,inner sep=2pt] {} (-1.5,1);\draw[very thick] (0,-1)--(0,1);\draw[very thick] (.5,-1)to[out=90,in=-90](2,1);
    \draw[wei] (-.25,-1) to[out=90,in=-90] node [pos=.7,circle,fill=red,inner sep=2pt] {} (1.25,1) ;\draw[wei] (1.5,-1)--(1.5,1); \draw[wei] (-.5,-1)--(-.5,1);\draw[very thick] (2,-1)to[out=90,in=-90](-1,1);\draw[wei] (2.5,-1)--(2.5,1); }
\end{equation*}
This does not precisely match the operators above, but it does after we add a crossing of the red strands that joined at the top:
\begin{equation*}
    \tikz[baseline, yscale=1.2]{\draw[wei] (-2,-1) to[out=90,in=-90] (-2,1);\draw[very thick] (-1.5,-1)to[out=90,in=-90](.5,1);\draw[very thick] (-1,-1)to[out=90,in=-90] node [pos=.7,circle,fill=black,inner sep=2pt] {} (-1.5,1);\draw[very thick] (0,-1)--(0,1);\draw[very thick] (.5,-1)to[out=90,in=-90](2,1);
    \draw[wei] (-.25,-1) to[out=90,in=-90] node [pos=.75,circle,fill=red,inner sep=2pt] {} (1.25,.65) to[out=90,in=-90] (1.5,1) ;\draw[wei] (1.5,-1)to[out=90,in=-90](1.5,.65) to[out=90,in=-90](1.25,1); \draw[wei] (-.5,-1)--(-.5,1);\draw[very thick] (2,-1)to[out=90,in=-90](-1,1);\draw[wei] (2.5,-1)--(2.5,1); }
\end{equation*}
The action of this in the usual polynomial representation of the KLR algebra of $A_{\rankp}$ matches the formulas we have given.  The fact that this is a bimodule map follows from the faithfulness of the polynomial representations.
\end{proof} \section{The geometry of quivers and perverse sheaves}

\subsection{Quiver representations} As in the previous section, we fix  $\rankp,n$ and $\chi$.  We can think of this as giving a cocharacter into $GL_n$, and let \notation{$P_\chi$}{The parabolic associated to $\chi$; this is block upper-triangular matrices with blocks corresponding to repeats of $\chi$.}$P_\chi\subset GL_n$ be the parabolic whose Lie algebra is the non-positive weight space for this cocharacter.  This is the standard Borel if $\chi_i\neq \chi_j$ for all $i,j$, and in general is block upper-triangular matrices, with blocks corresponding to the consecutive $\chi_i$ which are equal. 

Fix a dimension vector $\Bv=(v_1,\dots, v_{\rankp-1})\in \Z_{\geq 0}^{\rankp-1}$ as in the previous section.  Consider the representation 
\[V=\Hom(\C^{v_1},\C^{v_2})\oplus \Hom(\C^{v_2},\C^{v_3})\oplus \cdots \oplus \Hom(\C^{v_{\rankp-2}},\C^{v_{\rankp-1}})\oplus \Hom(\C^{v_{\rankp-1}},\C^n)\]  of the group \notation{$\Gc{\chi}$}{The group $P_\chi\times
\Gz$ reflecting change of basis for quiver representations together with a flag.}$G=\Gc{\chi}=P_\chi\times GL_{v_1}\times \cdots \times GL_{v_{\rankm}}$.  Recall that we introduced the notation \notation{$\Gz$}{The group $GL_{v_1}\times \cdots \times GL_{v_{\rankm}}$.}$\Gz=GL_{v_1}\times \cdots \times GL_{v_{\rankm}}$ in the introduction.
In the notation popular with physicists, this corresponds to the following quiver: 
\begin{equation*}
\tikz{
	\node[draw, thick, circle, inner sep=6pt,fill=white] (a) at (-6,0) {$v_1$};
	\node[draw, thick, circle, inner sep=6pt,fill=white] (b) at (-4,0) {$v_2$};	
		\node[draw, thick, circle, inner sep=2pt,fill=white] (d) at (0,0) {$v_{\rankm}$};
	\node[draw, thick,inner sep=12pt] (s) at (2,0){$n$};
	\node[inner sep=12pt,fill=white] (c) at (-2,0){$\cdots$};
		\draw[thick,mid] (a) -- (b) ;
		\draw[thick,mid] (b) -- (c) ;
		\draw[thick,mid] (c) -- (d) ;
		\draw[thick,mid] (d) -- (s) ;
}
\end{equation*}
Attached to the space $V$ with the action of $G$, we have an equivariant derived category $D^b_G(V)$\notation{$D^b_G(V)$}{The derived category of constructible sheaves on the quotient stack $V/G$---that is, the equivariant derived category for $G$ acting on $V$.} as introduced in \cite{BL}; in modern terminology, we would think of this as the derived category of constructible sheaves on the quotient stack $V/G$.  There are various other avatars of this category, such as strongly equivariant D-modules, but we will only use a few basic facts about this category, such as the decomposition theorem and the computation of Ext-algebras as Borel-Moore homology familiar from \cite[\S 8.6]{CG97}.

First, we simply need to classify the orbits of $P_\chi$ in $V$.  Recall that a {\bf segment} in $[1,\rankp]$ is a list of consecutive integers $(k,k+1,\dots, \ell)$, and a multi-segment is a multi-set of segments.  The {\bf dimension vector} of a segment is the vector $(0,\dots, 1,\dots, 1,\dots, 0)\in \Z^{\rankp}$ with 1 in every position in $[k,\ell]$ and 0 in all others, and the dimension vector of a multi-segment is the sum of those for the constituent segments.  That is, it is the vector that records how many times an index $i\in [1,\rankp]$ appears in the constituent segments.
\begin{definition}\label{def:segments}
A flavored segment is a pair consisting of a segment and an integer $\beta\in \Z$.  A $\chi$-flavored multi-segment is a multi-segment with a choice of flavoring on each segment with $\ell=\rankp$ (and {\it no} additional information about other segments) such that the flavors of the different segments agree with $\chi$ up to permutation.

We call the segments with $\ell=\rankp$ {\bf flavored} and those with $\ell<\rankp$ {\bf unflavored}.
\end{definition}
\begin{example}
If $n=2$ and $\rankp=2$ and $v_1=1$ then there are two multisegments with the correct dimension vector: $\{(1), (2),(2)\}$ and $\{(1,2), (2)\}$.  There is only one way of flavoring $\{(1), (2),(2)\}$, mapping the two copies of $(2)$ to the two coordinates of $\chi$. 

On the other hand, for $\{(1,2), (2)\}$, there are two different possible flavors, as long as $\chi_1\neq \chi_2$,  depending on the bijection we choose between the sets $\{(1,2), (2)\}$ and $\{\chi_1,\chi_2\}$.  If $\chi_1=\chi_2$, then we are back to having a single possible choice of flavor.
\end{example}
These are relevant because of the following fact from the appendix:
\begin{theorem}[Lemmata \ref{lem:orbits} \& \ref{lem:simply-connected}]
The $P_\chi$-orbits in $V$ are in bijection with $\chi$-flavored multi-segments with the corresponding dimension vector.  Each of these orbits is equivariantly simply connected.  
\end{theorem}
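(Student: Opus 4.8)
The plan is to reduce the classification of orbits to the representation theory of the equioriented type $A_{\rankp}$ quiver together with the combinatorics of parabolic double cosets in $GL_n$. Write $G = P_\chi\times G_0$ with $G_0 = GL_{v_1}\times\cdots\times GL_{v_{\rankp-1}}$, so that the orbits in the statement are the $G$-orbits on $V$ (equivalently, the $P_\chi$-orbits on the quotient stack $V/G_0$). I would first use the standard device that, for a closed subgroup $H$ of a group $\widetilde G$ acting on a space $X$, the $H$-orbits on $X$ correspond to the $\widetilde G$-orbits on $\widetilde G/H\times X$, with matching point stabilizers; applied with $\widetilde G = GL_n\times G_0$ and $H = G$, this identifies the $G$-orbits on $V$ with the $(GL_n\times G_0)$-orbits on $(GL_n/P_\chi)\times V$, where $GL_n$ acts diagonally on the flag variety and on the framing node $\C^n$. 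Concretely, each $\mathcal F\in V$ carries a canonical partial flag in $\C^n$ with terms $\operatorname{im}(f_{\rankp-1}\cdots f_a)$, and the $G$-orbit of $\mathcal F$ is determined by its isomorphism type as a quiver representation together with the relative position of that flag against the standard flag fixed by $P_\chi$; this is the generalization to all of $V$ of the remark already made in the text, that on the open locus where the $f_i$ are injective these images form a complete flag and one recovers $GL_n/B$.

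Next I would classify the $(GL_n\times G_0)$-orbits on $V$ by itself. By Gabriel's theorem and the Krull--Schmidt theorem for the $A_{\rankp}$ quiver these are in bijection with multisegments $\bM$ of dimension vector $(v_1,\dots,v_{\rankp-1},n)$, a representative being $\mathcal F_\bM = \bigoplus_S \mathcal F_S^{\oplus c_S}$ with $S$ running over segments, $\mathcal F_S$ the interval module, and $c_S$ the multiplicity of $S$ in $\bM$. The orbits lying over the $\bM$-orbit are the orbits of $\Aut(\mathcal F_\bM)$ on $GL_n/P_\chi$, the action factoring through the image of $\Aut(\mathcal F_\bM)$ in $GL_n = GL(\C^n)$. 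The key computation is that this image is exactly the standard parabolic $Q_\bM$ with successive block sizes $c_{[1,\rankp]}, c_{[2,\rankp]},\dots, c_{[\rankp,\rankp]}$ (the multiplicities of the flavored segments): this rests on the elementary fact that $\Hom(\mathcal F_{[a,\rankp]},\mathcal F_{[a',\rankp]})$ is one-dimensional when $a'\le a$ and zero otherwise, and that a nonzero such map restricts to an isomorphism on the node-$\rankp$ component; hence the restriction homomorphism $\End(\mathcal F_\bM)\to\End(\C^n)$ is onto the block-triangular parabolic subalgebra $\mathfrak q_\bM$ and $\Aut(\mathcal F_\bM)\twoheadrightarrow Q_\bM$. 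Therefore the orbits over $\bM$ are the double cosets $Q_\bM\backslash GL_n/P_\chi$, which by the Bruhat decomposition biject with the double cosets of the Young subgroups $S_{c_{[1,\rankp]}}\times\cdots\times S_{c_{[\rankp,\rankp]}}$ and $S_\chi = S_{g_1}\times\cdots\times S_{g_k}$ in $S_n$, i.e.\ with nonnegative integer matrices having row sums $(c_{[1,\rankp]},\dots,c_{[\rankp,\rankp]})$ and column sums $(g_1,\dots,g_k)$. Unwinding Definition \ref{def:segments}, such a matrix is precisely a $\chi$-flavoring of $\bM$, the $(a,j)$ entry recording how many of the $c_{[a,\rankp]}$ copies of the segment $[a,\rankp]$ receive the $j$-th distinct value of $\chi$ (which occurs $g_j$ times), with two flavorings defining the same $\chi$-flavored multisegment exactly when they give the same matrix. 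Letting $\bM$ vary gives the claimed bijection, and the dimension-vector condition of \eqref{eq:OGZ2} is automatic.

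For equivariant simple connectedness, observe that $G$ is connected (a product of a parabolic in $GL_n$ with general linear groups), so it suffices to show the stabilizer of a point on each orbit is connected. A representative of a fixed orbit is a pair $(\mathcal F_\bM, W_\bullet)$ with $W_\bullet$ a flag in $\C^n$ in the relevant double coset, and its stabilizer in $GL_n\times G_0$ is $\{\phi\in\Aut(\mathcal F_\bM)\mid\phi_{\rankp}\text{ preserves }W_\bullet\}$, where $\phi_{\rankp}$ is the action of $\phi$ at the node $\rankp$. This is the unit group of the finite-dimensional algebra $A' = \{\phi\in\End(\mathcal F_\bM)\mid\phi_{\rankp}(W_i)\subseteq W_i\text{ for all }i\}$: it is a subalgebra because $\phi\mapsto\phi_{\rankp}$ is an algebra homomorphism, and a unit of $\End(\mathcal F_\bM)$ lying in $A'$ has $\phi_{\rankp}$ invertible, hence $\phi_{\rankp}(W_i)=W_i$ and $\phi^{-1}\in A'$. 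The unit group of a finite-dimensional $\C$-algebra is the complement, in the ambient affine space, of the proper hypersurface where the determinant of the regular representation vanishes, hence is connected; so the stabilizer is connected and the orbit is equivariantly simply connected.

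I expect the main obstacle to be the orbit bookkeeping in the second paragraph: one must pin down the $\Hom$-spaces between interval modules carefully enough to see that $Q_\bM$ and its block sizes are exactly as stated, and then check that the parabolic double cosets match $\chi$-flavorings precisely --- in particular getting right the degenerate situations where $\chi$ has repeated entries or $\bM$ has repeated flavored segments, so that distinct double cosets (resp.\ distinct flavorings) collapse, as in the example following Definition \ref{def:segments}. By contrast, the reduction to $(GL_n/P_\chi)\times V$ and the connectedness argument are formal once the orbit picture is in place.
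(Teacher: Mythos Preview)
Your proof is correct and takes a genuinely different route from the paper's. The paper argues by hand: it writes down explicit orbit representatives $f_\Sigma$ indexed by segmentations of $\Omega$, shows by an inductive rank computation (peeling off the segments containing $1$ via the flags $\ker f_{k;1}$ and $f_{\rankp;1}^{-1}(F_{p_\bullet})$, then passing to the quotient by the subrepresentation generated by $\C^{v_1}$) that the $G_\chi$-orbit determines the flavored multisegment, and then proves that every orbit meets some $f_\Sigma$ by an explicit Gram--Schmidt-style construction of bases adapted to the flags \eqref{eq:big-flag}. For simple connectedness it describes the stabilizer of $f_\Sigma$ layer by layer and identifies it as a product of general linear groups extended by a unipotent radical.

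Your argument is more structural: you trade the $P_\chi$-action on $V$ for the $(GL_n\times G_0)$-action on $GL_n/P_\chi\times V$, invoke Gabriel's theorem to reduce to $\Aut(\mathcal F_\bM)$-orbits on a partial flag variety, compute (via the Hom spaces between interval modules ending at $\rankp$) that the image of $\Aut(\mathcal F_\bM)$ in $GL_n$ is the parabolic $Q_\bM$, and then read off the orbits as parabolic double cosets, i.e.\ contingency tables with the prescribed margins, i.e.\ $\chi$-flavorings. The connectedness argument via ``unit group of a finite-dimensional $\C$-algebra'' is slicker than the paper's explicit layer-by-layer description. What the paper's approach buys is the explicit representatives $f_\Sigma$, which are used immediately afterward to identify the image of $X(\Bi_\bQ)$ with an orbit closure; your approach yields the bijection more cleanly but would require an extra step to name representatives. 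Conversely, your argument makes transparent that the orbit count is a Bruhat-type statement and would adapt readily to other Dynkin quivers or other choices of framing group.
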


We view the subset $\{(i,1),\cdots,(i,v_i)\}\subset \Omega $ as corresponding to an ordered basis 
$\{b_{(i,1)},\dots, b_{(i,v_i)}\}$ of $\C^{v_i}$.  Giving this copy of $\C^{v_i}$ degree $i$, we can view
\[\C^\Omega \cong \C^{v_1}\oplus \cdots \oplus \C^{v_{\rankp}}\] as a graded vector space, and we can view a degree 1 map $f\colon \C^\Omega \to \C^\Omega $ as an element of $V$, that is, of quiver representation of $A_{\rankp}$ with dimension vector $(v_1,\dots, v_{\rankp})$.  

\subsection{Quiver flag varieties and the equivariant derived category} As before, consider a total preorder $\preceq$ satisfying \eqref{eq:order-1} and  \eqref{eq:order-2}. This choice of preorder induces a flag $F^{\prec}_\bullet$, with each subspace given by the formula:
\[F^{\prec}_{(i,k)}=\operatorname{span}\{b_{(j,\ell)}\mid (j,\ell)\preceq (i,k)\}.\]
If the preorder is not an order,  equivalent elements give the same subspace, so this flag will have some redundancies in it.
We say that a flag on $\C^\Omega $ indexed by the equivalence classes   has {\bf type $\preceq$} or {\bf type $\Bi$} if it is conjugate to a flag of this form under $\Gz$, and let  $\Fl(\Bi)$ be the set of such flags.  Note that $\Gz$ acts on this space transitively, with the stabilizer of $F^{\prec}_\bullet$ given by a parabolic $P_0$, which only depends on the equivalence relation of $\preceq$; in particular, for any total order, we get the same Borel $B_0$.  Consider the $\Gz$-space  
\[X(\Bi)=\{ (f,F_\bullet)\in V\times \Fl(\Bi) \mid f(F^{\prec}_{(i,k)})\subset F^{\prec}_{(i,k)}\}.\notation{$X(\Bi)$}{The set of quiver representations with a flag of type $\Bi$.}\]
\begin{lemma}
If $\Bi$ is $\chi$-parabolic, then $X(\Bi)$ has an action of $P_\chi$ by the post-composition action on $V$ and the trivial action on $\Fl(\Bi)$.  This commutes with the $\Gz$-action, inducing a $P_\chi\times \Gz$ action for which projection to $V$ is equivariant.
\end{lemma}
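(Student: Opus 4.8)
The plan is to write the $P_\chi$-action on $X(\Bi)$ down explicitly and then verify by hand that it lands in $X(\Bi)$, commutes with the $G_0$-action, and is compatible with the projection $\pi\colon X(\Bi)\to V$. The one piece of real content is a combinatorial consequence of the $\chi$-parabolic hypothesis, so I will isolate it first. Note that $G_0$ preserves the grading on $\C^\Omega\cong\C^{v_1}\oplus\cdots\oplus\C^{v_\rankp}$ — indeed it acts as the identity on $\C^{v_\rankp}=\C^n$ — so every flag in $\Fl(\Bi)$ is a flag of graded subspaces, and for $F_\bullet\in\Fl(\Bi)$ the degree-$\rankp$ part of any step does not change under the $G_0$-action; by \eqref{eq:order-1} it has the form $\operatorname{span}(b_{(\rankp,1)},\dots,b_{(\rankp,m)})$ for $m=\max\{\ell\mid (\rankp,\ell)\preceq(i,k)\}$ (or $0$ if there is no such $\ell$).

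The key claim is that $\chi$-parabolicity forces each such $m$ to be $0$, $n$, or an index with $\chi_m\neq\chi_{m+1}$, so that $\operatorname{span}(b_{(\rankp,1)},\dots,b_{(\rankp,m)})$ is a union of complete blocks of $P_\chi$ and hence one of the subspaces in the standard partial flag fixed by $P_\chi$. Indeed, if $m<n$ and $\chi_m=\chi_{m+1}$ then $(\rankp,m)\approx(\rankp,m+1)$, so $(\rankp,m+1)\preceq(\rankp,m)\preceq(i,k)$, contradicting the maximality of $m$. This is the step that genuinely uses the hypothesis and where I expect whatever care is needed; everything else is formal.

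Given this, define $p\cdot(f,F_\bullet):=(p\cdot f,F_\bullet)$ for $p\in P_\chi$, where $p\cdot f=\tilde p\circ f$ with $\tilde p\in GL(\C^\Omega)$ equal to $p$ on $\C^n$ and the identity on $\C^{v_1}\oplus\cdots\oplus\C^{v_\rankm}$; unwinding this, it is exactly the post-composition of the last arrow of the quiver representation $f$ with $p$. Since each flag step $F_{(i,k)}$ is graded and $\tilde p$ preserves every graded summand — the ones of degree $<\rankp$ trivially and the degree-$\rankp$ one by the key claim — we get $\tilde p(F_{(i,k)})=F_{(i,k)}$. This shows simultaneously that $\tilde p$ fixes $F_\bullet$ (so the induced action on $\Fl(\Bi)$ is trivial) and that $(\tilde p\circ f)(F_{(i,k)})\subseteq\tilde p(F_{(i,k)})=F_{(i,k)}$, using the defining condition $f(F_{(i,k)})\subseteq F_{(i,k)}$ of $X(\Bi)$; hence $(p\cdot f,F_\bullet)\in X(\Bi)$. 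Functoriality in $p$ is clear, so this is an action of $P_\chi$.

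Finally, commutation with $G_0$ is formal: on $V$ the post-composition of the last arrow with $p\in GL_n$ commutes with the $G_0$-action (which acts on the earlier arrows and by pre-composition on the last one), while on $\Fl(\Bi)$ the $P_\chi$-action is trivial and commutes with everything; this produces the $P_\chi\times G_0$-action. Equivariance of $\pi\colon X(\Bi)\to V$ is then immediate, since for both groups the action on the $V$-coordinate of $X(\Bi)$ is given by the same formula as the action on $V$ itself.
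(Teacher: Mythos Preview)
Your proof is correct and follows essentially the same approach as the paper: both arguments hinge on showing that the extension $\tilde p$ of $p\in P_\chi$ to $GL(\C^\Omega)$ fixes the relevant flag steps, by analyzing the degree-$\rankp$ component and invoking the $\chi$-parabolic hypothesis. Your presentation is arguably more direct---you show in one stroke that the degree-$\rankp$ part of \emph{every} step lies in the standard $P_\chi$-invariant partial flag, whereas the paper first reduces the compatibility check to the steps with $i<\rankp$ or $\chi_k\neq\chi_{k-1}$ and then checks $P_\chi$ fixes those---but the content is the same.
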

\begin{proof}
Assume that $\Bi$ is $\chi$-parabolic.  Thus, we have that if $\chi_k=\chi_{k+1}$, then  $F^{\prec}_{(\rankp,k+1)}=F^{\prec}_{(\rankp,k)}+\C\cdot b_{\rankp,k+1}$.  By degree considerations, $f(b_{\rankp,k+1})=0$, so $f(F^{\prec}_{(\rankp,k)})\subset F^{\prec}_{(\rankp,k)}$, then automatically, we have
\[f(F^{\prec}_{(\rankp,k+1)})=f(F^{\prec}_{(\rankp,k)})\subset F^{\prec}_{(\rankp,k)}\subset F^{\prec}_{(\rankp,k+1)}.\]
Thus, for a $\chi$-parabolic flag, we only need to check that $f(F^{\prec}_{(i,k)})\subset F^{\prec}_{(i,k)}$ for $i<\rankp$ or when $i=\rankp$ and $\chi_k\neq \chi_{k-1}$.  

Consider  $(f,F_\bullet)\in X(\Bi)$ and $g\in P_\chi$.  Consider the map $gf\colon \C^{\Omega} \to \C^{\Omega}$.  This is again a quiver representation, which is compatible with the flag $gF^{\prec}_{(i,k)}$.  Since $g\in P_\chi$, we have that $gF^{\prec}_{(i,k)}=F^{\prec}_{(i,k)}$ if $i<\rankp$ or if $i=\rankp$ and $\chi_k\neq \chi_{k-1}$.  Thus, by our observation above, our original flag is still compatible with $gf$.  The fact that this commutes with $\Gz$ is clear.
\end{proof}

For each segment $(k,\dots, \ell)$, 
let $\Bi_{(k,\dots, \ell)}=(\ell, \ell-1,\dots, k)$ be the word where we list the entries in reverse order.  
\begin{definition}
For a $\chi$-flavored multi-segment $\mathbf{Q}$, the corresponding {\bf good word} $\Bi_{\bQ}$\notation{$\Bi_{\bQ}$}{The good word corresponding to a $\chi$-flavored multi-segment $\mathbf{Q}$.} is the result of concatenating \begin{enumerate}
    \item the words for the unflavored segments in increasing lexicographic order (with the convention that attaching any suffix makes a word lower).  
    \item the words for the flavored segments sorted first by the attached flavor (in increasing order), and with the entries of the words for a single flavor shuffled together in decreasing order.  
\end{enumerate} 
\end{definition}
\begin{example}
Let $n=2,\rankp=3$ and consider the multi-segment \[\bQ=\{(1),(2),(2,1), (3,2,1) ,(3,2)\}.\]  If $\chi_1>\chi_2$, a flavoring of this multi-segment is a bijection between the sets $\{(3,2,1) ,(3,2)\}$ and $\{\chi_1,\chi_2\}$. In this case, good words for this multi-segment with the flavorings for the order we've written the sets above and its opposite are, respectively:
\[(1,2,1,2,3,2,1,3,2)\qquad (1,2,1,2,3,2,3,2,1).\]
On the other hand, if $\chi_1=\chi_2$, there is only one possible flavoring with
\[(1,2,1,2,3,3,2,2,1).\]
\end{example}
\begin{lemma}
The good word $\Bi_{\bQ}$ is always $\chi$-parabolic, and the image of $X(\Bi_{\bQ})$ is precisely the closure of the corresponding $\Gc{\chi}$-orbit.
\end{lemma}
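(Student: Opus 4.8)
The claim has two parts: first that the good word $\Bi_{\bQ}$ is $\chi$-parabolic, and second that the image of $X(\Bi_{\bQ})$ under projection to $V$ is the closure of the $G_\chi$-orbit indexed by $\bQ$. For the first part, the plan is to unwind the definition of the good word. The flavored segments are sorted first by their attached flavor in increasing order; whenever two consecutive flavors $\chi_k = \chi_{k+1}$ coincide, the corresponding flavored segments appear adjacently in the concatenation, and each ends with the letter $\rankp$ (since $\Bi_{(k,\dots,\rankp)} = (\rankp, \rankp-1, \dots, k)$ begins with $\rankp$, and we should be careful: concatenating $\Bi_{(\dots)}$ for segments sorted by flavor, the letter $\rankp$ from each flavored segment occurs at the start of that segment's block). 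I would check that the occurrences of $\rankp$ coming from segments with equal flavor are consecutive in $\Bi_{\bQ}$, so that the induced preorder groups them into one equivalence class; this is exactly the $\chi$-parabolic condition. The unflavored segments contribute no $\rankp$ at all (they have $\ell < \rankp$), so they don't interfere.

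For the second part, I would argue by a dimension count plus a closure argument, in the style familiar from quiver varieties and the Borel–Moore homology computations cited from \cite{CG97}. First I would describe the image of $X(\Bi_{\bQ})$ concretely: a point $(f, F_\bullet)$ has $f$ preserving the reference flag of type $\Bi_{\bQ}$, and the good word is designed so that the generic such $f$ is a direct sum of indecomposable quiver representations, one for each segment of $\bQ$, with the flavored indecomposables landing in the graded pieces of the flag dictated by $\chi$. Thus the generic point of $X(\Bi_{\bQ})$ maps to a representation in the $P_\chi$-orbit $\cO_{\bQ}$ (using the classification of $P_\chi$-orbits by $\chi$-flavored multisegments from the quoted theorem of the appendix). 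Since $X(\Bi_{\bQ})$ is irreducible ($G_0$ acts transitively on $\Fl(\Bi_{\bQ})$ with irreducible — indeed affine-space — fibers, and the total space fibers over $\Fl(\Bi_{\bQ})$ with irreducible fibers cut out by the linear incidence conditions), its image is irreducible, and being the image of a projective-over-$V$ — well, proper — map, actually here one should note $\Fl(\Bi_{\bQ})$ is proper so the projection $X(\Bi_{\bQ}) \to V$ is proper, hence has closed image. So the image is a closed irreducible $P_\chi$-stable subset containing $\cO_{\bQ}$, hence contains $\overline{\cO_{\bQ}}$.

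For the reverse inclusion I would show the image is contained in $\overline{\cO_{\bQ}}$: any $(f, F_\bullet) \in X(\Bi_{\bQ})$ has $f$ compatible with a flag of type $\Bi_{\bQ}$, and such $f$ degenerate from the generic one, so by upper-semicontinuity of orbit closures (a representation compatible with a fixed flag type lies in the closure of the generic orbit of that stratum), $f \in \overline{\cO_{\bQ}}$. Combining the two inclusions gives the image equals $\overline{\cO_{\bQ}}$.

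The main obstacle is the bookkeeping in identifying the generic point of $X(\Bi_{\bQ})$: one must verify that the specific ordering conventions in the definition of the good word — unflavored segments in increasing lexicographic order with the suffix convention, then flavored segments sorted by flavor then in decreasing integer order — really do produce a reference flag whose generic compatible representation decomposes as $\bigoplus$ of the segment indecomposables with the correct flavoring, rather than some more degenerate representation. This is essentially a careful matching of the combinatorics of segments against the standard description of indecomposable $A_{\rankp}$-quiver representations as interval modules, and showing the flag type $\Bi_{\bQ}$ is "as non-degenerate as possible" among $\chi$-parabolic words with the given dimension vector. Everything else is a routine application of properness, irreducibility, and semicontinuity of orbit closures.
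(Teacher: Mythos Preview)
Your plan is correct and structurally matches the paper's proof: both argue that $X(\Bi_{\bQ})$ is irreducible with proper projection to $V$, hence has closed irreducible image equal to the closure of a single $G_\chi$-orbit, and then identify that orbit by computing the generic representation type. Your reverse-inclusion step via semicontinuity is harmless but redundant once you know the image is closed irreducible and hits $\cO_{\bQ}$.

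The one place where the paper has content beyond your outline is exactly your flagged ``main obstacle.'' Rather than a direct combinatorial matching, the paper peels off segments one at a time by induction: the first block of the good word cuts out a submodule $M\subset\C^\Omega$ which is generically the indecomposable for that segment, the quotient $\C^\Omega/M$ lands in $X(\Bi')$ for the good word with that segment removed, and---crucially---the ordering conventions in the definition of the good word are arranged precisely so that $\Ext^1(\C^\Omega/M,M)=0$, forcing the extension to split on the open locus where both pieces have the expected type. That $\Ext^1$ vanishing is the payoff of the lexicographic/flavor ordering, and is the cleanest way to justify that the generic $f$ really is the direct sum you want. The paper also treats separately the case where several flavored segments share the same flavor $\chi_1=\cdots=\chi_p$: there the first block of the good word is not a single segment word but a merged block $(\rankp^{(p)},(\rankp-1)^{(?)},\dots)$, and one must peel off that entire parabolic block at once rather than one segment at a time---this is also where the $\chi$-parabolic condition is visibly satisfied, since the equal-flavor $\rankp$'s are grouped into a single equivalence class in the preorder rather than appearing in separate concatenated segment words.
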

\begin{proof}
We prove this by induction on the number of segments.  Note that since $X(\Bi)$ is irreducible, the same is true of its image in $V$, so its image is the closure of some orbit.  

Consider the segment $(k,\dots, \ell)$ which appears first in the good word; if any unflavored segments appear in $\bQ$, then this will be unflavored.  Assume for now that $\ell< \rankp$.  This portion of the word gives a submodule $M\subset \C^\Omega$, and on an open subset of $X(\Bi)$, this submodule is the unique indecomposible module with this dimension vector. Also, by assumption, the quotient $\C^\Omega/M$ gives a point in $X(\Bi')$, the good word obtained by removing this segment. By induction, on an open subset of $X(\Bi)$, the quotient $\C^\Omega/M$ has the representation type given by $\bQ$ with this segment removed.  

By the lexicographic condition $\Ext^1(\C^\Omega/M,M)=0$, so on the open set where both $M$ and $\C^\Omega/M$ have the correct representation type, we have a split extension, and the result follows.  

We need to be a bit careful in the case of a flavored word with $\chi_1=\chi_{2}=\cdots =\chi_{p}$; in this case, we don't have a single segment appearing at the bottom, but rather a word of the form 
$(\rankp,\dots, \rankp,\rankp-1,\dots, \rankp-1,\dots)$, which again has a corresponding subrepresentation $M$, which by construction satisfies $M\cap \C^n=\operatorname{span}(b_{\rankp,1},\dots, b_{\rankp,p})$.  The generic representation with this dimension vector is lies in the orbit given by the corresponding multi-segment, and by induction, the same is true $\C^\Omega/M$.  In particular, $\C^{\Omega}/M$ is compatible as desired with the action of $P_{\chi'}$ where $\chi'=(\chi_{p+1},\dots, \chi_n)$.  As before, we have $\Ext^1(\C^\Omega/M,M)=0$, so generically on $X(\Bi)$, we have a split extension, and the desired generic representation type.
\end{proof}

Let $\pi^{\Bi}\colon X(\Bi)\to V$ be the projection map, and $\perv_{\Bi}=\pi_*^{\Bi}\C_{X(\Bi)}[\dim X(\Bi)]$ be the pushforward of the sheaf of locally constant  $\C$-valued functions on $X(\Bi)$.\notation{$\perv_{\Bi}$}{The pushforward of the sheaf of locally constant  $\C$-valued functions on $X(\Bi)$ for the projection map $\pi^{\Bi}\colon X(\Bi)\to V$.} 
\begin{lemma}
If $\Bi'$ is the totalization of $\Bi$,  then there is a natural map $\phi\colon X(\Bi')\to X(\Bi)$ satisfying $\pi^{\Bi}\circ \phi =\pi^{\Bi'}$.  The map $\phi$ is a fiber bundle with fiber given by a product of complete flag varieties.
Thus, $\perv_{\Bi'}$ is a sum of copies of shifts of $\perv_{\Bi}$.   
\end{lemma}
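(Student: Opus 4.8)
The plan is to build $\phi$ as the map forgetting the redundant steps of the refined flag, and then to recognize $\phi$ as the pullback of a flag bundle. First I would set $\phi(f,F'_\bullet)=(f,F_\bullet)$, where $F_\bullet$ is the subflag of $F'_\bullet$ keeping only the terms indexed by equivalence classes of $\preceq$; since $\Bi'$ is the totalization of $\Bi$, the flag $F_\bullet$ has type $\Bi$, and its terms are among those of $F'_\bullet$, hence still $f$-invariant, so $(f,F_\bullet)\in X(\Bi)$. The identity $\pi^{\Bi}\circ\phi=\pi^{\Bi'}$ is immediate, since both maps send $(f,F'_\bullet)$ to $f$, and $\phi$ is manifestly $G_0$-equivariant (and $P_\chi$-equivariant when $\Bi$ is $\chi$-parabolic), so it is ``natural''.

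The key step is to show that the obvious map $X(\Bi')\to X(\Bi)\times_{\Fl(\Bi)}\Fl(\Bi')$ is an isomorphism, i.e.\ that for $(f,F_\bullet)\in X(\Bi)$ and any refinement $F'_\bullet$ of $F_\bullet$ of type $\Bi'$, every term of $F'_\bullet$ is automatically $f$-invariant. This is exactly where the linear ($A$-type) quiver structure of $f$ enters. Fix an equivalence class of $\preceq$; by \eqref{eq:order-2} all of its elements have the same first coordinate $i$, and by \eqref{eq:order-1} they form a consecutive block, so if $W\subsetneq W'$ denote the terms of $F_\bullet$ attached to this class and to its predecessor, then $W'/W$ is concentrated in degree $i$ (the relevant basis vectors lie in $\C^{v_i}$, and $G_0$ preserves the grading). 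Since $f$ raises the grading by one and $f(W')\subset W'$, the image $f(W')$ lies in the degree-$(\ne i)$ part of $W'$, which coincides with that of $W$; hence $f(W')\subset W$, and therefore $f$ preserves every subspace $U$ with $W\subset U\subset W'$, in particular every new term of $F'_\bullet$. Granting this, $\phi$ is the pullback along $X(\Bi)\to\Fl(\Bi)$ of the projection $\Fl(\Bi')=G_0/B_0\to G_0/P_0=\Fl(\Bi)$, a flag bundle with fiber $P_0/B_0$, which is a product of complete flag varieties --- one factor $\Fl(\C^{g})$ for each equivalence class of $\preceq$ of size $g$; being a flag bundle it is Zariski-locally trivial, and so is its pullback $\phi$.

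The sheaf-theoretic conclusion follows formally. Being a Zariski-locally trivial bundle with projective fiber, $\phi$ is proper, so $\perv_{\Bi'}=\pi^{\Bi'}_*\C_{X(\Bi')}[\dim X(\Bi')]=\pi^{\Bi}_*\big(R\phi_*\C_{X(\Bi')}[\dim X(\Bi')]\big)$. The fiber $P_0/B_0$ has cohomology concentrated in even degrees, so the Leray spectral sequence of the bundle $\phi$ degenerates --- equivalently, Leray--Hirsch applies, with Chern classes of the tautological bundles on $X(\Bi')$ restricting to generators of the fiber's cohomology --- giving $R\phi_*\C_{X(\Bi')}\cong\bigoplus_j\C_{X(\Bi)}[-2j]^{\oplus m_j}$ with $m_j=\dim H^{2j}(P_0/B_0;\C)$. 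Applying $\pi^{\Bi}_*$ and shifting yields $\perv_{\Bi'}\cong\bigoplus_j\perv_{\Bi}[2d-2j]^{\oplus m_j}$, where $d=\dim_\C(P_0/B_0)$; in particular $\perv_{\Bi'}$ is a direct sum of even shifts of $\perv_{\Bi}$, with multiplicities the Betti numbers of the fiber.

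The main obstacle is the isomorphism $X(\Bi')\cong X(\Bi)\times_{\Fl(\Bi)}\Fl(\Bi')$ --- the automatic $f$-invariance of refinements --- which is precisely the point at which one must exploit that $f$ is a representation of a type $A$ quiver, so that it shifts the grading by one and kills the single graded piece $W'/W$ of each equivalence-class block. Once the identity $f(W')\subset W$ is established, the remaining steps are a standard package: pullbacks of flag bundles, proper pushforward, and parity vanishing for the cohomology of flag varieties.
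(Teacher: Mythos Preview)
Your proof is correct and follows essentially the same route as the paper's: define $\phi$ by forgetting the extra terms of the refined flag, observe that each successive quotient $W'/W$ for an equivalence class of $\preceq$ is concentrated in a single degree so that $f$ automatically preserves every intermediate subspace, and conclude that $\phi$ is a fiber bundle with fiber a product of complete flag varieties. The paper phrases the key step as ``the induced action of $f$ is trivial for degree reasons,'' which is exactly your $f(W')\subset W$ argument.

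The only difference is in the last paragraph. The paper invokes the decomposition theorem (via the Hodge theorem for a proper map with smooth projective fiber) to write $\phi_*\C_{X(\Bi')}[\dim X(\Bi')]$ as a sum of shifts of local systems on $X(\Bi)$, and then notes that $X(\Bi)$ is homotopy equivalent to $G_0/P_0$, hence simply connected, so these local systems are trivial. You instead use Leray--Hirsch and the parity of $H^*(P_0/B_0)$ to obtain the decomposition directly, with explicit multiplicities given by the Betti numbers of the fiber. Your argument is slightly more elementary and yields a sharper statement (the exact shifts and multiplicities); the paper's argument generalizes more readily to situations where the fiber cohomology is not known so explicitly. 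Both are valid here.
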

\begin{proof}
The map $\phi$ is defined by forgetting the spaces attached to elements which are not maximal in their equivalence class in $\Bi$.  This  indeed gives an element of $X(\Bi)$ and a point in the fiber over a given point in $X(\Bi)$ is given by choosing a total flag in the subquotient of consecutive spaces in the flag.  This has an induced grading, for which it is homogeneous of a single degree.  Thus, the induced action of $f$ is trivial for degree reasons, and any choice of total flag gives an element of $X(\Bi').$

Since the map $\phi$ is a fiber bundle whose fiber is a smooth projective variety, the pushforward $\phi_*\C_{X(\Bi')}[\dim X(\Bi')]$ is a sum of shifts of local systems by the Hodge theorem.  The space $X(\Bi')$ is homotopy equivalent to $\Gz/P_0$ which is simply connected, so these local systems are all trivial.  This gives the result.   
\end{proof}

Recall that for each $\Gc{\chi}$-orbit, there is a unique $\Gc{\chi}$-equivariant perverse sheaf $\mathbf{IC}_{\bQ}$ which extends the trivial local system on the orbit.  

\begin{theorem}\label{thm:all-ICs}
Every simple $\Gc{\chi}$-equivariant perverse sheaf on $V$ is the intersection cohomology complex $\mathbf{IC}_{\bQ}$ for the orbit of a flavored multi-segment with the trivial local system, and $\mathbf{IC}_{\bQ}$ is a summand of $\perv_{\Bi_{\bQ}}$ up to shift.\notation{$\mathbf{IC}_{\bQ}$}{The intersection cohomology complex for the orbit of a flavored multi-segment with the trivial local system.}
\end{theorem}
\begin{proof}
The fact that $\mathbf{IC}_{\bQ}$ is a complete list of simple perverse sheaves follows from the classification of orbits and their equivariant 1-connectedness.  

Since $\pi^{\Bi}$ is a proper $\Gc{\chi}$-equivariant map with $X(\Bi)$ smooth, the Decomposition Theorem shows that $\perv_{\Bi}$ is a Verdier self-dual direct sum of shifts of perverse sheaves supported on the orbit closure $\overline{\mathbb{O}_{\bQ}}$.  

At least one of these summands must have support precisely equal to $\overline{\mathbb{O}_{\bQ}}$, and $\mathbf{IC}_{\bQ}$ is the only such option.  Thus, it must appear as a summand up to shift. 
\end{proof} 

\subsection{Graded lifts}
\nc{\Wpre}{W^{\operatorname{pre}}}
Let $\perv_{\chi}=\oplus_{\Bi}\perv_{\Bi}$ where $\Bi$ ranges over totalizations of $\chi$-parabolic words and $X_{\chi}=\sqcup_{\Bi} X(\Bi)$.  
As usual, by \cite{CG97}, we have that 
\[A_\chi=H^{BM,\Gc{\chi}}_*(X_{\chi}\times_V X_{\chi})\cong \Ext^\bullet_{D^b_{\Gc{\chi}}(V)}(\perv_{\chi},\perv_{\chi})\notation{$A_\chi$}{The Ext algebra $\Ext^\bullet_{D^b_{\Gc{\chi}}(V)}(\perv_{\chi},\perv_{\chi})$.}\] as  dg-algebras, where we give the left-hand side a non-standard grading. 

The sheaves $\perv_{\chi}$ have an additional structure which is hard to see directly on the sheaf level: they are the solutions of complexes of D-modules $\mathsf{F}_{\chi}$ which carry a mixed Hodge structure.  See \cite{saito2016young} for more details on mixed Hodge structures.   We don't need much of the full power of this mixed Hodge structure, but as in \cite[\S 2.8]{WebSD}, this gives us a natural dgg (differential-graded and graded) structure on $A_{\chi}$.  A dgg-algebra $A$ is a dg-algebra in the category of graded vector spaces;  that is, $A$ has a homological grading and an internal grading, with the differential having degree $(1,0)$. Given a dgg-algebra $A$, we let $A\dggmod$ be the category of representations of $A$, again, in the category of graded vector spaces, that is, dg-modules over $A$ equipped with an additional grading.  These form a graded dg-category where the morphism complexes are the degree $(*,0)$ part of the usual morphism complexes of dg-modules---again, this just means that we only use degree 0 morphisms between graded vector spaces.

One useful observation is that a dgg-algebra where the internal grading coincides with the homological grading is necessarily formal, since it is quasi-isomorphic to a graded $A_{\infty}$ algebra, and all the higher operations are trivial for grading reasons (see \cite[Th. 2.38]{WebSD} and \cite[Prop. 5.4]{ciriciFilteredAinfinity2022}).  

We can use this dgg-algebra structure to construct a graded lift:
\begin{definition}
	A {\bf grading} on an object $M$ in 
$D^{b}_{\Gc{\chi}}(V)$ is a dgg-module $N$ over $A_{\chi}$ and a quasi-isomorphism of $A_{\chi}$ dg-modules $\Hom^{\bullet}(\perv_{\chi},M)\cong N$, forgetting the second grading on $N$.  We can define a graded lift of 
$D^{b,\operatorname{mix}}_{\Gc{\chi}}(V)$ whose:
\begin{itemize}
	\item objects are graded objects in $D^{b}_{\Gc{\chi}}(V)$
	\item morphisms are degree 0 morphisms in $D^{b,\operatorname{mix}}_{\Gc{\chi}}(V)$ under the isomorphism \[\Hom^{\bullet}_{D^{b}_{\Gc{\chi}}(V)}(M,M')\cong \Hom^{\bullet}_{A\dgmod}(N,N').\]  That is, the functor sending a graded sheaf to the corresponding dgg-module is fully faithful.
\end{itemize} 
\end{definition}
\begin{remark}
	It should be more geometrically natural to define a grading as a Hodge, or perhaps twistor, structure on the sheaf $M$.  Unfortunately, the existence of non-Tate Hodge/twistor structures means that there are ``too many'' such structures lifting a given dgg-module structure.  It seems likely that a canonical ``most Tate'' choice of Hodge structure exists, but constructing it is too complicated a job for this paper.  This comes down to the question of whether the Hodge structure on $A_{\chi}$ makes this into a formal dg-algebra in the category of mixed Hodge structures.  See \cite[Rmk.\ 2.59]{WebSD} for more discussion of this phenomenon.
\end{remark}

By \cite[Thm. 4.5]{WebwKLR}, we have that:
\begin{theorem}\label{thm:T-iso}
For any preorders $\Bi,\Bj$ satisfying \eqref{eq:order-1} and \eqref{eq:order-2}, we have an isomorphism 
\[\Ext^\bullet(\perv_{\Bi},\perv_{\Bj})\cong e'(\Bi)\bT^\chi e'(\Bj)\]
matching convolution product with multiplication in $\bT^\chi$, the red dots with the cohomology ring $H^*(BP_\chi)$ and the black dots with Chern classes of the tautological bundles on $X(\Bi)$.  In particular, we have an isomorphism $A_\chi\cong \bT^\chi$.  
\end{theorem}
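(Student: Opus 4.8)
The plan is to deduce this from the general comparison theorem \cite[Thm.~4.5]{WebwKLR}, which identifies the $\Ext$-algebra of the pushforwards of constant sheaves from quiver flag varieties with a weighted KLR algebra, matching convolution with multiplication and tautological Chern classes with dots. So the real content is a bookkeeping exercise: recognize our $X(\Bi)$ as quiver flag varieties of the required form, match the resulting weighted KLR algebra with $\bT^\chi=(e_\chi\bT e_\chi)^{S_\chi}$, and pass from total orders to the divided-power idempotents $e'(\Bi)$.

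First I would set up the dictionary. The black strands of $\bT$ realize the quiver $A_{\rankp}$ with dimension vector $\Bv$, the red strands labelled $\rankp$ form the framing at the terminal vertex, and dots on red strands are the ``canonical deformation'' parameters of \cite{WebwKLR}, which geometrically are the equivariant parameters of the $GL_n$-action on $\C^n$; the $\chi$-parabolic grouping of red strands, with its residual $S_\chi$-symmetry, corresponds to replacing $GL_n$-equivariance by $P_\chi$-equivariance, since $H^*(BP_\chi)=\K[Y_1,\dots,Y_n]^{S_\chi}$ with $S_\chi$ permuting red strands within a block. For a total preorder $\Bi$, the variety $X(\Bi)=\{(f,F_\bullet)\mid f(F^{\prec}_{(i,k)})\subset F^{\prec}_{(i,k)}\}$ is exactly the quiver flag variety attached to the loading determined by $\Bi$, the map $\pi^{\Bi}$ is the proper map used in \cite[Thm.~4.5]{WebwKLR}, the tautological bundles on $X(\Bi)$ carry the Chern classes matched with black dots, and $H^*_{G_\chi}(\mathrm{pt})=H^*(BP_\chi)$ supplies the red dots. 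Applying that theorem in the $P_\chi\times G_0$-equivariant setting then gives $\Ext^\bullet(\perv_{\Bi},\perv_{\Bj})\cong e(\Bi)\bT^\chi e(\Bj)$ for total $\Bi,\Bj$, with convolution going to multiplication. The step I expect to need the most care is verifying that the weighted KLR algebra output by \cite[Thm.~4.5]{WebwKLR} in this equivariant setting is \emph{on the nose} $(e_\chi\bT e_\chi)^{S_\chi}$ --- that $P_\chi$-equivariance corresponds exactly to $S_\chi$-invariants on $e_\chi\bT e_\chi$ --- and that the two product structures agree.

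For a non-total preorder $\Bi$ I would reduce to the total case via the Lemma preceding Theorem~\ref{thm:all-ICs}: if $\Bi'$ is the totalization of $\Bi$, the map $\phi\colon X(\Bi')\to X(\Bi)$ is a fibre bundle in a product of complete flag varieties, so $\perv_{\Bi'}$ is a sum of shifts of $\perv_{\Bi}$ whose multiplicity space is the cohomology of the fibre --- a polynomial ring carrying the action of the relevant product of symmetric groups, i.e.\ a product of nilHecke algebras. The summand $\perv_{\Bi}$ is cut out by a primitive idempotent of this nilHecke algebra, and by construction $e'(\Bi)$ is the matching primitive idempotent, so $\Ext^\bullet(\perv_{\Bi},\perv_{\Bj})=e'(\Bi)\bigl(e(\Bi')\bT^\chi e(\Bj')\bigr)e'(\Bj)=e'(\Bi)\bT^\chi e'(\Bj)$, compatibly with all the structure. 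Summing over totalizations of $\chi$-parabolic words and invoking the identification $A_\chi=H^{BM,G_\chi}(X_{\chi}\times_V X_{\chi})\cong\Ext^\bullet(\perv_\chi,\perv_\chi)$ of \cite{CG97} recalled above gives $A_\chi\cong\bT^\chi$, with diagrammatic degree matching cohomological degree. Finally, purity is immediate from the geometric origin: each $\perv_{\Bi}$ is pure of weight $0$ (proper pushforward of $\C_{X(\Bi)}[\dim]$ from a smooth variety) and $\perv_\chi$ is Verdier self-dual, and $A_\chi$ is the $G_\chi$-equivariant Borel--Moore homology of the Steinberg-type variety $X_\chi\times_V X_\chi$, which is paved by affines --- its finitely many diagonal orbits being affine-space bundles over flag varieties by the description of $P_\chi$-orbits on $V$ in the appendix --- so its homology is spanned by fundamental classes of orbit closures and is pure of weight $0$; equivalently, this purity is part of the conclusion of \cite[Thm.~4.5]{WebwKLR}.
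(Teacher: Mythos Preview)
Your proposal is correct and follows precisely the paper's approach: the paper simply invokes \cite[Thm.~4.5]{WebwKLR} with no further argument, and your write-up is an explicit unpacking of the bookkeeping (the $P_\chi$-equivariance $\leftrightarrow$ $S_\chi$-invariants identification, the reduction from preorders to total orders via the fibre-bundle lemma, and the purity) that this citation is meant to encapsulate. If anything, you have been more careful than the paper itself.
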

It might seem strange that in the definition of the algebra $A_\chi$, we used only honest orders, not preorders, when the first sentence of the theorem covers preorders. This is needed to match the usual definition of $\bT^\chi$.  If we incorporated preorders, the result would be a larger, Morita equivalent algebra which contained thick strands as in the extended graphical calculus of \cite{KLMS,SWschur}.  It is much more difficult to give a nice presentation of this larger algebra.

These results combine to show that the algebra $\bT^\chi$ controls the derived category $D^b_{\Gc{\chi}}(V)$:
\begin{theorem}\label{thm:generator}
The object $\perv_{\chi}$ is a compact generator of $D^b_{\Gc{\chi}}(V)$.  Thus, the functor $\Ext(\perv_{\chi},-)$ induces equivalences:
\[D^b_{\Gc{\chi}}(V)\cong \bT^\chi\dgmod\qquad D^{b,\operatorname{mix}}_{\Gc{\chi}}(V)\cong\bT^\chi\dggmod\cong  D^b(\bT^\chi\operatorname{-gmod}).\]
The functor of forgetting graded structure corresponds to the functor considering a complex of graded modules as a dg-module by collapsing gradings.
\end{theorem}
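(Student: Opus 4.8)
The plan is to deduce everything from the general ``derived Morita'' principle: if a triangulated category with a dg-enhancement is compactly generated by an object $P$, then $\RHom(P,-)$ identifies it with the derived category of (perfect) dg-modules over $\RHom(P,P)$, and this equivalence turns generators into free modules and ``forgetful'' functors into restriction of scalars. Everything needed about $\RHom(\perv_\chi,\perv_\chi)$ is already supplied by Theorem~\ref{thm:T-iso}: it is $\bT^\chi$, as a formal dg-algebra, with cohomological grading equal to the internal grading and with a weight filtration which is pure. So the only real work is to check that $\perv_\chi$ is a compact generator of $D^b_{G_\chi}(V)$ and then to track the mixed structure.

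For compact generation: each $\perv_\Bi$ is a proper pushforward of the shifted constant sheaf on the smooth variety $X(\Bi)$, hence a bounded constructible complex, and such objects are compact in the ind-completion $\operatorname{Ind} D^b_{G_\chi}(V)$, whose subcategory of compact objects is $D^b_{G_\chi}(V)$ itself. For generation I would show that the thick subcategory $\langle\perv_\chi\rangle$ contains every simple equivariant perverse sheaf: by Theorem~\ref{thm:all-ICs} each $\mathbf{IC}_\bQ$ is a shift-summand of $\perv_{\Bi_\bQ}$; the good word $\Bi_\bQ$ is $\chi$-parabolic, so its totalization $\Bi$ is one of the words indexing summands of $\perv_\chi$, and by the totalization lemma $\perv_{\Bi}$ is a sum of shifts of $\perv_{\Bi_\bQ}$, whence $\mathbf{IC}_\bQ\in\langle\perv_\chi\rangle$. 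Since the appendix shows there are only finitely many $P_\chi$-orbits and each is equivariantly simply connected, the heart of the perverse $t$-structure on $D^b_{G_\chi}(V)$ is a finite-length abelian category with simple objects exactly the $\mathbf{IC}_\bQ$; as every object of $D^b_{G_\chi}(V)$ has finitely many nonzero, finite-length perverse cohomology sheaves, it lies in the triangulated subcategory generated by the $\mathbf{IC}_\bQ$, hence $D^b_{G_\chi}(V)=\langle\perv_\chi\rangle$.

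Now apply derived Morita theory: $\operatorname{Ind} D^b_{G_\chi}(V)\cong D(\RHom(\perv_\chi,\perv_\chi))$ with $\perv_\chi$ going to the free rank-one module, and restricting to compact objects gives $D^b_{G_\chi}(V)\cong\bT^\chi\dgmod$ via $\RHom(\perv_\chi,-)$, using Theorem~\ref{thm:T-iso} and the formality it records (formality follows from purity: the higher $A_\infty$-products raise cohomological degree by at least one while preserving weight, which is impossible on a weight-pure algebra where weight and cohomological degree coincide). The mixed statement is the same argument run inside $D^{b,\operatorname{mix}}_{G_\chi}(V)$, which by construction is generated by Tate twists of the $\perv_\Bi$; purity of $\perv_\chi$ locks the weight and cohomological gradings together, so the mixed endomorphism algebra is again $\bT^\chi$ but now with the internal grading seen as the \emph{weight}, and the graded version of derived Morita gives $D^{b,\operatorname{mix}}_{G_\chi}(V)\cong D^b(\bT^\chi\operatorname{-gmod})$.

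Finally, the forgetful functor $D^{b,\operatorname{mix}}_{G_\chi}(V)\to D^b_{G_\chi}(V)$ sends the mixed generator $\perv_\chi$ to the plain generator $\perv_\chi$, so under the two equivalences it is induced by the evident map from the weight-graded algebra $\bT^\chi$ to the same algebra regarded as a dg-algebra with the weight grading placed in cohomological degree; on module categories this is precisely the functor that takes a bounded complex of graded $\bT^\chi$-modules to the dg-module obtained by collapsing the internal grading into the cohomological one, as claimed. The step I expect to be the main obstacle is the mixed/Hodge-theoretic bookkeeping --- setting up $D^{b,\operatorname{mix}}_{G_\chi}(V)$ as a genuine triangulated category with a dg-enhancement in which $\perv_\chi$ is a compact generator, and pinning down the normalizations so that ``weight $=$ internal grading'' and ``purity $\Rightarrow$ formality'' hold on the nose --- precisely the point the authors flag as being treated lightly; the Morita step and the generation argument are routine given Theorems~\ref{thm:all-ICs} and~\ref{thm:T-iso}.
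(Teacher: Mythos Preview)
Your proposal is correct and follows essentially the same approach as the paper's proof: compactness from constructibility, generation via the perverse $t$-structure and Theorem~\ref{thm:all-ICs}, then dg-Morita theory together with the purity/formality of Theorem~\ref{thm:T-iso}, and finally the collapsing-of-gradings description of the forgetful functor. Your version is considerably more explicit than the paper's (which dispatches generation with ``Clearly, by the $t$-structure properties''), and your closing remark about the mixed/Hodge bookkeeping being the genuinely delicate point is well taken and matches the paper's own caveat.
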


\subsection{The categorical action}

Consider the 2-category $\mathsf{Perv}$\notation{$\mathsf{Perv}$}{The 2-category whose 1-morphisms are semi-simple $P_{\chi'}\times P_{\chi}$-equivariant perverse sheaves on $GL_n$.} whose
\begin{itemize}
    \item objects are dominant integral weights $\chi=(\chi_1\leq \cdots \leq \chi_n)$,
    \item 1-morphisms $\chi\to \chi'$ are sums of shifts of semi-simple $P_{\chi'}\times P_{\chi}$-equivariant perverse sheaves on $GL_n$, or equivalently, $GL_n$-equivariant perverse sheaves on $GL_n/P_{\chi'}\times GL_n/P_{\chi}$ with composition given by convolution,
    \item 2-morphisms in the equivariant derived category. 
\end{itemize}
As discussed in \cite[Thm. 6]{Webcomparison}, this is equivalent to the category  $\mathsf{Flag}$\notation{$\mathsf{Flag}$}{The 2-category whose 1-morphisms are singular Soergel bimodules over $H^*(BP_{\chi'})\times H^*(BP_{\chi})$} whose
\begin{itemize}
    \item objects are dominant integral weights $\chi=(\chi_1\leq \cdots \leq \chi_n)$,
    \item 1-morphisms $\chi\to \chi'$ are sums of singular Soergel  $H^*(BP_{\chi'})\operatorname{-}H^*(BP_{\chi})$-bimodules.
    \item 2-morphisms are degree 0 homomorphisms of bimodules. 
\end{itemize}
via the functor that takes hypercohomology of a sum of shifts of equivariant perverse sheaves.  For technical reasons, we add to these categories an object $\emptyset$, which has Hom with any other object given by the zero category.  

Consider the fiber product \[P_{\chi'}\setminus GL_n/P_{\chi}\times_{BP_{\chi}}V/\Gc{\chi}=\frac{GL_{n}\times V}{P_{\chi'}\times \Gc{\chi}}.\] where the action is via $(p',p)\cdot (g,v)=(p'gp^{-1},pv)$.  This is, of course, equipped with an action map \[a\colon P_{\chi'}\setminus GL_n/P_{\chi}\times_{BP_{\chi}}V/\Gc{\chi}\to V/\Gc{\chi'}, \] and as usual, we define convolution of sheaves $\mathcal{F}\in D^b_{P_{\chi'}\times P_\chi}(GL_n)$ and $\mathcal{G}\in D^b_{\Gc{\chi}}(V)$ as $\mathcal{F}\star \mathcal{G}=a_* (\mathcal{F}\boxtimes \mathcal{G}).$
\begin{lemma}
Convolution of sheaves induces a representation of $\mathsf{Perv}$ in $\mathsf{Cat}$ sending $\chi\mapsto D^b_{\Gc{\chi}}(V)$.
\end{lemma}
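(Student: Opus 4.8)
The plan is to verify the axioms that make the assignment $\chi\mapsto D^b_{G_\chi}(V)$, $\mathcal{F}\mapsto \mathcal{F}\star(-)$ into a pseudofunctor $\mathsf{Perv}\to\mathsf{Cat}$; this is the standard convolution formalism, entirely parallel to the action of $\mathsf{Perv}$ on the categories of sheaves on $GL_n/P$ that appears in \cite{Webcomparison}. That $\star$ is a bifunctor, and that $2$-morphisms of $\mathsf{Perv}$ (maps in the equivariant derived category of $GL_n$) induce natural transformations of the functors $\mathcal{F}\star(-)$, is immediate, since $\boxtimes$ and $a_*$ are functors. What requires work is the construction of the associativity and unit constraints and the verification of the pentagon and triangle coherences.

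For the associativity constraint, fix weights $\chi,\chi',\chi''$ and sheaves $\mathcal{F}\in D^b_{P_{\chi''}\times P_{\chi'}}(GL_n)$, $\mathcal{G}\in D^b_{P_{\chi'}\times P_\chi}(GL_n)$, $\mathcal{H}\in D^b_{G_\chi}(V)$. One forms the triple quotient $(GL_n\times GL_n\times V)/(P_{\chi''}\times P_{\chi'}\times G_\chi)$, with $P_{\chi'}$ acting on the right of the first $GL_n$ and on the left of the second. This stack maps to $V/G_{\chi''}$ in two visibly related ways: first multiply the two copies of $GL_n$ and then act on $V$, or first act on $V$ and then multiply in the remaining $GL_n$; both are restrictions of the iterated action map. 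Since composition in $\mathsf{Perv}$ is itself convolution over $GL_n$, both $(\mathcal{F}\circ\mathcal{G})\star\mathcal{H}$ and $\mathcal{F}\star(\mathcal{G}\star\mathcal{H})$ are computed by pushing $\mathcal{F}\boxtimes\mathcal{G}\boxtimes\mathcal{H}$ forward along this stack, in two stages differing only in the order of contraction. Because $GL_n/P_{\chi'}$ is projective, the relevant partial action maps are proper, so $a_*=a_!$, and proper base change together with the projection formula identify the two pushforwards, yielding the associator; smooth base change for the stack-quotient projections handles the passage between the various quotients.

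For the unit, the identity $1_\chi$ of $\mathsf{Perv}$ is the $\mathbf{IC}$-sheaf of the closed $P_\chi\times P_\chi$-orbit on $GL_n$, i.e.\ the constant sheaf on $P_\chi\subset GL_n$; convolving with it amounts to restricting $\mathcal{F}\boxtimes\mathcal{H}$ to $\{e\}\times V$ before pushing to $V/G_\chi$, which recovers $\mathcal{H}$, so the unitors are the evident isomorphisms. The pentagon and triangle axioms then follow because every structure isomorphism in sight is an instance of the canonical proper-base-change and projection-formula isomorphisms, and these are coherent by the standard compatibilities of the six-functor formalism (equivalently, by uniqueness of the iterated-convolution description). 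I expect the only real friction to be the bookkeeping: writing the convolution diagrams correctly as diagrams of quotient stacks and checking that each map appearing is either smooth or proper, so that base change applies unchanged, exactly as in the sheaf-theoretic half of \cite{Webcomparison}; once this is set up the argument is formal. Finally, since $\pi^{\Bi}$ and the action map $a$ are proper, this convolution preserves weights, so the same construction upgrades verbatim to an action on $D^{b,\operatorname{mix}}_{G_\chi}(V)$, which is what is needed for Theorem \ref{thm:action}.
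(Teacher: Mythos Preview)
Your sketch is correct: this is the standard convolution/base-change argument one gives to show that equivariant sheaves on a group act on equivariant sheaves on any space the group acts on. Note, however, that the paper offers no proof of this lemma at all---it is stated and then the text moves on, treating it as a routine consequence of the convolution formalism (with \cite{Webcomparison} in the background). So there is nothing to compare against; you have simply supplied the argument the paper leaves implicit. Your identification of the unit $1_\chi$ as the constant sheaf on the closed orbit $P_\chi\subset GL_n$ is consistent with how the paper treats $\Phi_{\mathsf P}(\eE_i^{(a)})$ a few lines later, and your remark that properness of $a$ lets the construction lift to the mixed setting is exactly what the paper uses without comment.
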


The category  $\mathsf{Flag}$ carries a well-known categorical action defined by Khovanov and Lauda \cite[\S 6]{KLIII}. 
We define $\mathcal{U}$\notation{$\mathcal{U}$}{The 2-category categorifying $U_q(\mathfrak{sl}_\infty)$} to be the 2-category categorifying the quantum group of $\mathfrak{sl}_\infty$ (following the definition of \cite[Def. 1.1]{Brundandef} which unifies all previous definitions).  In this category: 
\begin{itemize}
    \item objects are integral weights of $\mathfrak{sl}_\infty$, that is, finite sums of the unit vectors $\ep_k$ for $k\in \Z$.
    \item 1-morphisms are generated by formal symbols
    \[\eE_i\colon \mu \to \mu+\al_i=\mu+\ep_{i+1}-\ep_{i}\qquad \eF_i\colon \mu \to \mu-\al_i=\mu-\ep_{i+1}+\ep_{i}\]
    \item 2-morphisms are given by certain string diagrams modulo the relations in \cite{Brundandef}. 
\end{itemize}

As discussed in \cite[\S 2.2]{Webcomparison}, we thus have a 2-functor $\Phi_{\mathsf{P}}\colon \tU\to \mathsf{Perv}$\notation{$\Phi_{\mathsf{P}}$}{The 2-functor mapping $\tU$ to $\mathsf{Perv}$.} sending $\mu_\chi \to \chi$ for weights of the correct form, and to $\emptyset$ otherwise. 

If $\chi'=\chi^{+i^a}$, then the $P_{\chi}\times P_{\chi'}$-orbits on $G$ are in bijection with orbits of double cosets for $(S_\chi',S_\chi)$.  In particular, there is a unique closed orbit, given by the product $P_{\chi',\chi}=P_{\chi'}P_{\chi}$.  The cohomology $H^*_{P_{\chi'}\times P_{\chi}}(P_{\chi',\chi})$ of this orbit gives the action of \cite[\S 6]{KLIII}.  As discussed below the proof of Theorem 6 in \cite[\S 2.2]{Webcomparison}, we have that:  
\begin{lemma}
The image $\Phi_{\mathsf{P}}(\eE_i^{(a)})$ is the Verdier-self-dual shift of the constant sheaf $\C_{\chi',\chi}$ on $P_{\chi',\chi}$.  Similarly $\Phi_{\mathsf{P}}(\eF_i^{(a)})$ is the same sheaf with the role of the factors switched.
\end{lemma}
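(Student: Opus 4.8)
The plan is to reduce the statement to a computation of equivariant cohomology of a single closed orbit, using the defining characterization of $\Phi_{\mathsf{P}}$ recalled above together with the equivalence $\mathsf{Perv}\simeq\mathsf{Flag}$. First I would record the relevant geometry: set $\chi'=\chi^{+i^a}$ and note that $P_{\chi,\chi'}:=P_\chi\cap P_{\chi'}$ is again a standard parabolic, since the set partition of $[1,n]$ into blocks that simultaneously refines the blocks of $\chi$ and those of $\chi'$ still consists of intervals. Among the $P_{\chi'}\times P_\chi$-orbits on $GL_n$ --- equivalently the $(S_{\chi'},S_\chi)$-double cosets in $S_n$ --- the orbit of the identity, $P_{\chi',\chi}=P_{\chi'}P_\chi$, is the unique closed one, of minimal dimension $\dim P_{\chi'}+\dim P_\chi-\dim P_{\chi,\chi'}$. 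Its stabilizer under $(p',p)\cdot g=p'gp^{-1}$ is the diagonal copy of $P_{\chi,\chi'}$, so $P_{\chi',\chi}\cong(P_{\chi'}\times P_\chi)/P_{\chi,\chi'}$ is smooth; hence the constant sheaf $\C_{\chi',\chi}:=\C_{P_{\chi',\chi}}[\dim P_{\chi',\chi}]$, with the evident half-Tate twist making it pure of weight $0$, is a simple, Verdier self-dual, $P_{\chi'}\times P_\chi$-equivariant perverse sheaf on $GL_n$ --- it is the intersection cohomology sheaf of the closed orbit, which for a smooth closed orbit is just this shifted constant sheaf.

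Next I would identify the bimodule attached to $\C_{\chi',\chi}$ with the one Khovanov and Lauda attach to $\eE_i^{(a)}$. Under the hypercohomology equivalence $\mathsf{Perv}\simeq\mathsf{Flag}$ of \cite[Thm.~6]{Webcomparison}, $\C_{\chi',\chi}$ corresponds to $H^*_{P_{\chi'}\times P_\chi}(P_{\chi',\chi})\cong H^*(BP_{\chi,\chi'})$, regarded as a singular Soergel bimodule over $(H^*(BP_{\chi'}),H^*(BP_\chi))$ by restriction along the two inclusions $P_{\chi,\chi'}\hookrightarrow P_{\chi'}$ and $P_{\chi,\chi'}\hookrightarrow P_\chi$. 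This is exactly the elementary ``merge--split'' bimodule, and, as recalled just above the statement, $H^*_{P_{\chi'}\times P_\chi}(P_{\chi',\chi})$ is precisely the $1$-morphism through which the categorical action of \cite[\S 6]{KLIII} realizes $\eE_i^{(a)}$. Since $\Phi_{\mathsf{P}}$ is, by its characterization in \cite{Webcomparison}, the unique $2$-functor $\tU\to\mathsf{Perv}$ whose composite with hypercohomology agrees with the Khovanov--Lauda functor, $\Phi_{\mathsf{P}}(\eE_i^{(a)})$ and $\C_{\chi',\chi}$ have the same image in $\mathsf{Flag}$; both being objects of $\mathsf{Perv}$, they are isomorphic there, so
\[\Phi_{\mathsf{P}}(\eE_i^{(a)})\cong \C_{\chi',\chi}.\]
For $\eF_i^{(a)}\colon\chi'\to\chi$ I would invoke the symmetry of $\tU$ exchanging source and target (the $\tU$-analogue of the transpose of a bimodule), under which $\eF_i^{(a)}$ is obtained from $\eE_i^{(a)}$; on the $\mathsf{Perv}$ side this is realized by the equivalence $D^b_{P_{\chi'}\times P_\chi}(GL_n)\cong D^b_{P_\chi\times P_{\chi'}}(GL_n)$ induced by inversion $g\mapsto g^{-1}$, which carries $P_{\chi'}P_\chi$ to $P_\chi P_{\chi'}$. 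Hence $\Phi_{\mathsf{P}}(\eF_i^{(a)})$ is the constant-sheaf intersection cohomology complex of that opposite closed orbit, i.e.\ $\C_{\chi',\chi}$ with the roles of the two parabolic factors interchanged. (Equivalently, $\eF_i^{(a)}$ is biadjoint to $\eE_i^{(a)}$ up to shift, and the adjoint of the constant sheaf on a smooth closed orbit is again such a constant sheaf.)

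The step I expect to be the main obstacle is the bookkeeping around divided powers: the characterization of $\Phi_{\mathsf{P}}$ is phrased on the generators $\eE_i,\eF_i$, so one must check that the \emph{divided power} $\eE_i^{(a)}$ --- the image of a primitive idempotent $e$ in the nilHecke algebra on $a$ strands acting inside the $a$-fold composite $\eE_i^{\,a}$ --- lands on the closed-orbit sheaf rather than on some other summand of $\eE_i^{\,a}$. This I would handle by a Bott--Samelson argument: the $a$-fold convolution of the one-step closed-orbit sheaves is $\pi_*\C[\dim]$ for a resolution $\pi$ of $\overline{P_{\chi',\chi}}$ whose fibres are products of flag varieties, the nilHecke algebra acts by correspondences on the cohomology of the $GL_a/B_a$-fibre, and the primitive idempotent $e$ cuts out the one-dimensional trivial isotypic piece --- the unique summand of $\pi_*\C[\dim]$ supported on all of $P_{\chi',\chi}$ with multiplicity one, which by the decomposition theorem and smoothness of the orbit is $\C_{\chi',\chi}$. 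I would also fix once and for all the normalization of the cohomological shift and Tate twist so that ``Verdier-self-dual shift of the constant sheaf'' is unambiguous, taking $\C_{\chi',\chi}=\C_{P_{\chi',\chi}}[\dim P_{\chi',\chi}](\tfrac12\dim P_{\chi',\chi})$, consistent with the weight-$0$ convention used for $\perv_{\Bi}$.
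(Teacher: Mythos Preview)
Your proof is correct and is precisely an expansion of what the paper only cites: the lemma is not proved in the paper but referred to \cite[\S 2.2]{Webcomparison}, and the sentence immediately preceding it (that $H^*_{P_{\chi'}\times P_\chi}(P_{\chi',\chi})$ realizes the Khovanov--Lauda action of \cite[\S 6]{KLIII}) already contains your key step of matching the hypercohomology bimodule under $\mathsf{Perv}\simeq\mathsf{Flag}$. Your divided-power worry is slightly overcautious---the Khovanov--Lauda functor of \cite[\S 6]{KLIII} is defined directly on $\eE_i^{(a)}$, so the uniqueness characterization of $\Phi_{\mathsf P}$ already pins down its image without the Bott--Samelson decomposition---but the extra argument is harmless.
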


For us, the important new ingredient here is that since we have an algebraic manifestation of the category $D^b_{P_\chi}(V)$ given by the algebra $\bT^\chi$, the action of the 2-category $\tU$ has a similar manifestation.

For every 1-morphism $\mathcal{F}\colon \chi\to \chi'$ in the 2-category $\Perv$, we can define a bimodule 
\[B_{\mathcal{F}}=\Ext_{D^b_{\Gc{\chi'}}(V)}(\perv_{\chi'}, \mathcal{F}\star \perv_{\chi}).\notation{$B_{\mathcal{F}}$}{The bimodule corresponding to a 1-morphism in the 2-category $\Perv$.} \]
By Theorem \ref{thm:generator}, we have a commutative diagram   \[\tikz[->,thick]{
\matrix[row sep=18mm,column sep=35mm,ampersand replacement=\&]{
\node (d) {$D^{b,\operatorname{mix}}_{\Gc{\chi}}(V)$}; \& \node (e)
{$ D^{b,\operatorname{mix}}_{\Gc{\chi'}}(V)$}; \\
\node (a) {$D^b(\bT^\chi\operatorname{-gmod})$}; \& \node (b)
{$D^b(\bT^{\chi'}\operatorname{-gmod})$}; \\
};
\draw (a) -- (b) node[below,midway]{$B_{\mathcal{F}}\Lotimes_{A_\chi}-$}; 
\draw (d) -- (a) node[left,midway]{$\Ext_{D^b_{\Gc{\chi}}(V)}(\perv_{\chi}, -)$} ; 
\draw (e) -- (b) node[right,midway]{$\Ext_{D^b_{\Gc{\chi'}}(V)}(\perv_{\chi'}, -)$}; 
\draw (d) -- (e) node[above,midway]{$\mathcal{F}\star$}; 
}\]
\begin{lemma}
For any sum of shifts of semi-simple perverse sheaves $\mathcal{F}$ in $D^b_{P_{\chi'}\times P_{\chi}}(GL_n)$, the bimodule  $B_{\mathcal{F}}$ is sweet, that is, it is projective as a left module over $\bT^{\chi'}$ and as a right module over $\bT^\chi$.
\end{lemma}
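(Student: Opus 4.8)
\emph{Plan.} The idea is to push everything through the equivalences of Theorem~\ref{thm:generator}, which turn $B_{\mathcal{F}}$ into $\Ext_{D^b_{G_{\chi'}}(V)}(\perv_{\chi'},\mathcal{F}\star\perv_\chi)$ and reduce sweetness to a purity statement about the complex $\mathcal{F}\star\perv_\chi$ on $V$, which we handle with the decomposition theorem. First note that sweetness is stable under direct sums, grading shifts, passage to direct summands, and (using the commutative square preceding the lemma together with associativity of convolution, which give $B_{\mathcal{F}\star\mathcal{G}}\cong B_{\mathcal{F}}\Lotimes_{\bT^{\chi'}}B_{\mathcal{G}}$) under convolution of $1$-morphisms. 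Since every $1$-morphism of $\mathsf{Perv}$ is a summand of a shift of an iterated convolution of the elementary sheaves $\Phi_{\mathsf{P}}(\eE_i^{(a)})$ and $\Phi_{\mathsf{P}}(\eF_i^{(a)})$ — the singular Soergel bimodule analogue of the fact that Bott--Samelson bimodules generate, via the equivalence $\mathsf{Perv}\cong\mathsf{Flag}$ of \cite[Thm.~6]{Webcomparison} and the orbit classification of the appendix — it suffices to treat $\mathcal{F}=\Phi_{\mathsf{P}}(\eE_i^{(a)})$ with $\chi'=\chi^{+i^a}$ (and its transpose $\Phi_{\mathsf{P}}(\eF_i^{(a)})$, handled symmetrically).

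\emph{Key claim.} For such an elementary $\mathcal{F}$, the complex $\mathcal{F}\star\perv_\chi$ is a finite direct sum of shifts of direct summands of $\perv_{\chi'}$. To see this, recall that $\mathcal{F}$ is a Verdier-self-dual shift of the constant sheaf on the closed orbit $P_{\chi',\chi}=P_{\chi'}P_\chi$, so convolution by it is computed by the smooth proper correspondence
\[
V/G_\chi \xleftarrow{\ p\ } V/G_{\chi,\chi'} \xrightarrow{\ q\ } V/G_{\chi'},
\qquad G_{\chi,\chi'}=(P_\chi\cap P_{\chi'})\times G_0,
\]
as $\mathcal{F}\star(-)\cong q_*p^*(-)$ up to shift, where $p$ and $q$ are fibre bundles in partial flag varieties. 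Writing $\perv_\chi=\bigoplus_{\Bi}\pi^{\Bi}_*\C_{X(\Bi)}[\dim X(\Bi)]$ and applying proper base change to the smooth morphism $p$, the pullback $p^*\perv_\chi$ is, up to shift, a sum of $\tilde\pi_{\Bi*}\C_{\tilde X_\Bi}[\dim \tilde X_\Bi]$ where $\tilde X_\Bi=X(\Bi)\times_{V/G_\chi}V/G_{\chi,\chi'}$ is smooth and $\tilde\pi_\Bi$ proper; composing with $q$, we get $\mathcal{F}\star\perv_\chi\cong\bigoplus_\Bi\rho_{\Bi*}\C_{\tilde X_\Bi}[\dim \tilde X_\Bi]$ up to shift with each $\rho_\Bi$ proper and $G_{\chi'}$-equivariant. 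By the decomposition theorem each summand is a semisimple perverse complex, and by Theorem~\ref{thm:all-ICs} every simple $G_{\chi'}$-equivariant perverse sheaf on $V$ is a summand of some $\perv_{\Bj}$ up to shift; this proves the claim, and by the bootstrapping in the previous paragraph it then holds for all $1$-morphisms $\mathcal{F}$ of $\mathsf{Perv}$.

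\emph{Conclusion.} Given the claim, apply the equivalence $\Ext(\perv_{\chi'},-)\colon D^b_{G_{\chi'}}(V)\to \bT^{\chi'}\dgmod$: it is additive and sends $\perv_{\chi'}$ to $\Ext^\bullet(\perv_{\chi'},\perv_{\chi'})=\bT^{\chi'}$ (the free rank-one left module, by Theorem~\ref{thm:T-iso}), hence sends direct summands of $\perv_{\chi'}$ to direct summands of $\bT^{\chi'}$. Therefore $B_{\mathcal{F}}$ is a finite direct sum of grading shifts of projective left $\bT^{\chi'}$-modules, i.e.\ projective as a left module; the homological-degree bookkeeping that ensures $B_{\mathcal{F}}$ is an honest bimodule (rather than a complex) is routine, using that $\mathcal{F}$ is perverse after the conventional shift. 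For the right $\bT^\chi$-module structure, use that $\mathcal{F}^\vee=\Phi_{\mathsf{P}}(\eF_i^{(a)})$ is biadjoint to $\mathcal{F}$, whence $B_{\mathcal{F}^\vee}\cong\RHom_{\bT^{\chi'}}(B_{\mathcal{F}},\bT^{\chi'})$; applying the left-module argument verbatim to $\mathcal{F}^\vee$ shows $B_{\mathcal{F}^\vee}$ is projective as a left $\bT^\chi$-module, and duality then gives that $B_{\mathcal{F}}$ is projective as a right $\bT^\chi$-module. This completes the proof.

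\emph{Main obstacle.} The crux is the geometric input in the Key claim: identifying convolution by the closed-orbit constant sheaf with $q_*p^*$ for the displayed correspondence, and in particular verifying that the base changes $\tilde X_\Bi$ are smooth with proper $G_{\chi'}$-equivariant projection to $V/G_{\chi'}$ — this relies on the precise orbit structure established in the appendix, so that $p$ is genuinely a flag bundle. A secondary point is the Bott--Samelson-type generation reducing to elementary $\mathcal{F}$, and, less seriously, the biadjointness statement $B_{\mathcal{F}^\vee}\cong\RHom_{\bT^{\chi'}}(B_{\mathcal{F}},\bT^{\chi'})$, which is standard once one works with the appropriate Verdier duality on $D^b_{P_{\chi'}\times P_\chi}(GL_n)$ despite $GL_n$ being non-proper.
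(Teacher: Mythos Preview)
Your argument is correct, but it is considerably more elaborate than the paper's. The paper dispenses with the reduction to elementary $\mathcal{F}$ entirely: since $\perv_\chi$ is itself a pushforward of a constant sheaf from a smooth variety, it is semi-simple, and convolution of semi-simple complexes is again semi-simple (this is the standard consequence of the decomposition theorem, via purity of weights). Hence $\mathcal{F}\star\perv_\chi$ is a sum of shifts of $\mathbf{IC}_{\bQ}$'s for \emph{any} semi-simple $\mathcal{F}$, with no need to first write $\mathcal{F}$ as a summand of a Bott--Samelson convolution. Combined with Theorem~\ref{thm:all-ICs} (each $\mathbf{IC}_{\bQ}$ is a summand of $\perv_{\chi'}$), this gives left-projectivity in one stroke. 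For the right module structure, the paper observes that the anti-automorphism $g\mapsto g^{-1}$ on $GL_n$ interchanges the $P_{\chi'}\times P_\chi$- and $P_\chi\times P_{\chi'}$-equivariant categories and swaps the left and right module structures on $B_{\mathcal{F}}$, reducing immediately to the left-module case; your biadjointness/$\RHom$ argument works but is unnecessary.

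Two small remarks on your write-up: the appendix you cite classifies $G_\chi$-orbits on $V$, not $P_{\chi'}\times P_\chi$-orbits on $GL_n$, so it does not directly support the Bott--Samelson generation claim (that claim is true, but comes from the singular Soergel bimodule theory, not from the appendix). And your bootstrap via $B_{\mathcal{F}\star\mathcal{G}}\cong B_{\mathcal{F}}\Lotimes B_{\mathcal{G}}$ is exactly the paper's Lemma~\ref{lem:BFG}, which appears \emph{after} this lemma precisely because the paper's order of logic avoids needing it here.
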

\begin{proof}
Since the anti-automorphism of taking inverse switches left and right module structures, it's enough to prove this for the left module structure.  

Note that for any flavored multi-segment $\bQ$ for $\chi'$, we have that $\mathbf{IC}_{\bQ}$ is a summand of  $\perv_{\chi'}$ by Theorem \ref{thm:all-ICs}. Thus, $\Ext_{D^b_{\Gc{\chi'}}(V)}(\perv_{\chi'}, \mathbf{IC}_{\bQ})$ is a projective as a left module over $\bT^{\chi'}$; in fact it is of the form $\bT^{\chi'} e_{\bQ}$ for an idempotent projecting to $\mathbf{IC}_{\bQ}$ as a summand of $\perv_{\chi'}$.

Since $\mathcal{F}$ is semi-simple, by the Decomposition Theorem, the complex $\mathcal{F}\star \perv_{\chi}$ is a sum of shifts of simple perverse sheaves in $D^b_{\Gc{\chi'}}(V)$.   Thus,  $B_{\mathcal{F}}=\Ext_{D^b_{\Gc{\chi'}}(V)}(\perv_{\chi'}, \mathcal{F}\star \perv_{\chi})$ is a sum of projective  left $\bT^{\chi'}$-modules, and thus projective. 
\end{proof}

It follows immediately that the functor of tensor product with $B_{\mathcal{F}}$ is exact and:
\begin{lemma}\label{lem:BFG}
$B_{\mathcal{F}\star \mathcal{G}}\cong B_{\mathcal{F}}\Lotimes B_{\mathcal{G}}=B_{\mathcal{F}}\otimes B_{\mathcal{G}}$.
\end{lemma}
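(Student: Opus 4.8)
The plan is to read the isomorphism off the commutative square displayed above, applied to the three $1$-morphisms $\mathcal{F}$, $\mathcal{G}$, and $\mathcal{F}\star\mathcal{G}$, using nothing beyond associativity of convolution and the preceding (sweetness) lemma. Write $\mathcal{G}\colon\chi''\to\chi$, so that $\mathcal{F}\star\mathcal{G}\colon\chi''\to\chi'$ and both $B_{\mathcal{F}}\otimes_{A_\chi}B_{\mathcal{G}}$ and $B_{\mathcal{F}\star\mathcal{G}}$ are $(A_{\chi'},A_{\chi''})$-bimodules. Recall that the square of Theorem~\ref{thm:generator} says precisely that the equivalence $\Ext_{D^b_{G_\chi}(V)}(\perv_\chi,-)$ carries the rank-one free module to $\perv_\chi$ and intertwines the functor $\mathcal{F}\star(-)$ with $B_{\mathcal{F}}\Lotimes_{A_\chi}(-)$, where $B_{\mathcal{F}}$ is by construction the image of the rank-one module under the transported functor; and likewise for $\mathcal{G}$ and for $\mathcal{F}\star\mathcal{G}$.

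First I would splice the squares for $\mathcal{F}$ and $\mathcal{G}$ together. Since convolution on $V$ is (part of) an action of the $2$-category $\mathsf{Perv}$, there is a canonical isomorphism $(\mathcal{F}\star\mathcal{G})\star(-)\cong\mathcal{F}\star(\mathcal{G}\star(-))$ of functors $D^b_{G_{\chi''}}(V)\to D^b_{G_{\chi'}}(V)$. Transporting this through the equivalences of Theorem~\ref{thm:generator} and using the three squares, the functor $B_{\mathcal{F}\star\mathcal{G}}\Lotimes_{A_{\chi''}}(-)$ becomes isomorphic to the composite $(B_{\mathcal{F}}\Lotimes_{A_\chi}(-))\circ(B_{\mathcal{G}}\Lotimes_{A_{\chi''}}(-))\cong (B_{\mathcal{F}}\Lotimes_{A_\chi}B_{\mathcal{G}})\Lotimes_{A_{\chi''}}(-)$. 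Evaluating this natural isomorphism on the rank-one free $A_{\chi''}$-module — equivalently, applying $\Ext_{D^b_{G_{\chi'}}(V)}(\perv_{\chi'},-)$ to the tautological identity $(\mathcal{F}\star\mathcal{G})\star\perv_{\chi''}=\mathcal{F}\star(\mathcal{G}\star\perv_{\chi''})$ — yields an isomorphism $B_{\mathcal{F}\star\mathcal{G}}\cong B_{\mathcal{F}}\Lotimes_{A_\chi}B_{\mathcal{G}}$ of left $A_{\chi'}$-modules; since on both sides the right $A_{\chi''}$-action is induced from the $\End(\perv_{\chi''})$-action on $\perv_{\chi''}$, this is in fact an isomorphism of $(A_{\chi'},A_{\chi''})$-bimodules. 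Finally, by the preceding lemma $B_{\mathcal{F}}$ is projective, hence flat, as a right $A_\chi$-module, so the derived tensor product coincides with the ordinary one: $B_{\mathcal{F}}\Lotimes_{A_\chi}B_{\mathcal{G}}=B_{\mathcal{F}}\otimes_{A_\chi}B_{\mathcal{G}}$.

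The step I expect to require the most care is the coherence bookkeeping hidden in the middle paragraph: one must check that the structure isomorphisms of the three commutative squares are compatible with the $2$-categorical composition in $\mathsf{Perv}$, so that the resulting comparison is genuinely an isomorphism of $(A_{\chi'},A_{\chi''})$-bimodules — and, with a view to Theorem~\ref{thm:action}, natural in $\mathcal{F}$ and $\mathcal{G}$ — rather than merely an isomorphism of underlying complexes or one-sided modules. A more concrete alternative that sidesteps most of this uses the decomposition theorem directly: $\mathcal{G}\star\perv_{\chi''}$ is a sum of shifts of simple perverse sheaves $\mathbf{IC}_{\bQ}$, each a summand of $\perv_\chi$ by Theorem~\ref{thm:all-ICs}, so $B_{\mathcal{G}}$ is a sum of shifts of projectives of the form $A_\chi e_{\bQ}$; then both $B_{\mathcal{F}\star\mathcal{G}}$ and $B_{\mathcal{F}}\otimes_{A_\chi}B_{\mathcal{G}}$ are visibly the corresponding sum of shifts of $B_{\mathcal{F}}e_{\bQ}$, and only the matching of the right module structures remains, which again follows from functoriality of $\Ext_{D^b_{G_{\chi'}}(V)}(\perv_{\chi'},-)$.
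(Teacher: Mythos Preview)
Your argument is correct and is precisely the unpacking of what the paper intends: in the text the lemma is stated with no proof at all, prefaced only by ``It follows immediately that the functor of tensor product with $B_{\mathcal{F}}$ is exact and:'', i.e.\ the author regards it as a formal consequence of the commutative square together with the sweetness lemma. Your first paragraph spells out exactly this formal consequence, and your use of projectivity of $B_{\mathcal{F}}$ as a right module to collapse $\Lotimes$ to $\otimes$ is the intended final step.

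Your caveat about coherence is overly cautious here: the square is obtained by applying the equivalence $\Ext(\perv_\chi,-)$ to a functor, so the comparison isomorphisms are automatically natural and compatible with composition; the bimodule structure on both sides comes from the same $\End(\perv_{\chi''})$-action, as you note. The alternative argument via the decomposition theorem is also fine but unnecessary.
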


\begin{theorem}\label{thm:E-iso}
If $\chi'=\chi^{+i^a}$, then we have that $B_{\C_{\chi',\chi}}=\mathbb{E}_i^{(a)}$ and $B_{\C_{\chi,\chi'}}=\mathbb{F}_i^{(a)}$.
\end{theorem}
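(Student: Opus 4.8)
The plan is to identify both sides as explicit subspaces of $e_{\chi'}\bT e_\chi$ (or its opposite) and match them. First I would unwind the definition of $B_{\C_{\chi',\chi}}$: by Lemma~\ref{lem:BFG} and the fact that $\C_{\chi',\chi}$ is the pushforward of the constant sheaf from the closed orbit $P_{\chi',\chi}\subset GL_n$, convolution with $\C_{\chi',\chi}$ on $D^b_{G_\chi}(V)$ is geometrically the ``change of parabolic'' functor. Concretely, the fiber product that computes $\mathcal{F}\star\perv_\chi$ for $\mathcal{F}=\C_{\chi',\chi}$ is, up to homotopy, the space $X(\Bi)$ attached to a $\chi$-parabolic word $\Bi$ which is simultaneously $\chi'$-parabolic after merging $a$ of the red strands labelled $i$ into strands labelled $i+1$; the orbit $P_{\chi',\chi}$ is exactly the locus where those $a$ strands form a ``rung'' as in \eqref{eq:ladder}. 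So $B_{\C_{\chi',\chi}}$ is computed as an $\Ext$-space between $\perv_{\chi'}$ and a shift of $\perv$ for this intermediate word.

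Then I would invoke Theorem~\ref{thm:T-iso} in the form it is stated for preorders: $\Ext^\bullet(\perv_{\Bi},\perv_{\Bj})\cong e'(\Bi)\bT^\chi e'(\Bj)$, matching convolution with multiplication. Applying this with $\Bj$ a $\chi$-parabolic word and $\Bi$ a word of the intermediate type (which is a preorder for the larger group-theoretic setup, with the $a$-fold rung appearing as a divided-power idempotent), the $\Ext$-space becomes $e_{\chi'}\bT e_\chi$ cut down by the idempotents that enforce $S_{\chi,\chi'}$-invariance — precisely the defining description $\mathbb{E}_i^{(a)}=(e_\chi\bT e_{\chi'})^{S_{\chi,\chi'}}$ of the ladder bimodule (read as a $\bT^{\chi'}$-$\bT^\chi$ bimodule). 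The case of $\C_{\chi,\chi'}$ is handled by the Verdier duality / inverse anti-automorphism that swaps the two factors, giving $\mathbb{F}_i^{(a)}$.

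To make the identification clean I would check two compatibilities. First, that the geometric convolution really produces the intermediate word: this is the content of the lemma identifying $X(\Bi_{\bQ})$ with orbit closures together with the fiber-bundle lemma showing $\perv_{\Bi'}$ is a sum of shifts of $\perv_{\Bi}$ for a totalization; here one uses that merging red strands corresponds to quotienting by the flag-variety fiber $P_{\chi'}/P_{\chi',\chi'}$, which matches the merge operator in Lemma~\ref{lem:E-rep}. Second, that the $\bT^{\chi'}$-$\bT^\chi$ bimodule structure on the $\Ext$-space matches the one on $\mathbb{E}_i^{(a)}$ coming from pre- and post-composition with $\bT$-diagrams; this follows because the isomorphism of Theorem~\ref{thm:T-iso} is an isomorphism of algebras (hence of all bimodules built from idempotent cutdowns), so it suffices to match the idempotents, which is exactly the translation between ``$S_{\chi,\chi'}$-invariant part'' on the algebra side and ``constant sheaf on the closed orbit $P_{\chi',\chi}$'' on the geometric side recorded in the lemma computing $\Phi_{\mathsf P}(\eE_i^{(a)})$.

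The main obstacle is the bookkeeping in the first step: describing the convolution $\C_{\chi',\chi}\star\perv_\chi$ explicitly enough to recognize it as $\perv$ of the intermediate word, rather than just ``some semisimple complex.'' Concretely one must produce the correspondence variety (the space of pairs consisting of a quiver representation compatible with a $\chi$-type flag, together with a point of $P_{\chi'}\backslash GL_n$) and identify it, via the closed-orbit condition, with $X(\Bi)$ for the merged word; the dimension shifts and the purity of the mixed structure then have to line up so that the resulting bimodule sits in a single graded degree $0$ piece, matching $\mathbb{E}_i^{(a)}$ on the nose rather than up to shift. Once that geometric identification is in hand, everything else is a formal consequence of Theorems~\ref{thm:T-iso} and~\ref{thm:generator} and Lemma~\ref{lem:BFG}.
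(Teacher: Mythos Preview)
Your high-level plan --- identify both sides as pieces of $e_{\chi'}\bT e_\chi$ via Theorem~\ref{thm:T-iso} --- is exactly the paper's, and your final sentence about the $\eF$ case via the swap anti-automorphism is right. But you are working much harder than necessary, and the ``main obstacle'' you flag is one the paper sidesteps entirely.

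The paper never tries to describe $\C_{\chi',\chi}\star\perv_{\Bi}$ as a pushforward from some intermediate $X(\Bi'')$. Instead it observes that convolution with the constant sheaf on the closed orbit $P_{\chi',\chi}$ is literally $\mathrm{Ind}_{P_{\chi,\chi'}}^{P_{\chi'}}\circ\mathrm{Res}^{P_\chi}_{P_{\chi,\chi'}}$, and then uses the $(\mathrm{Res},\mathrm{Ind})$ adjunction to move the whole computation into the smaller equivariant category:
\[
\Ext_{G_{\chi'}}(\perv_{\Bi'},\C_{\chi',\chi}\star\perv_{\Bi})\;\cong\;\Ext_{G_\chi\cap G_{\chi'}}(\perv_{\Bi'},\perv_{\Bi}).
\]
Now there is no convolution left to compute. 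One further step --- that $(P_\chi\cap P_{\chi'})$-equivariant $\Ext$ is the $S_{\chi,\chi'}$-invariants in $B$-equivariant $\Ext$ --- together with Theorem~\ref{thm:T-iso} in the Borel case $P=B$ gives $(e(\Bi')\bT e(\Bi))^{S_{\chi,\chi'}}$ on the nose, which is the definition of $e(\Bi')\mathbb{E}_i^{(a)}e(\Bi)$.

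Your route through an ``intermediate word with a divided-power rung'' is not wrong in spirit, but your description of the convolution is off: the convolved sheaf is not a single $\perv_{\Bi''}$ for one merged word, and trying to name the correspondence variety and match dimension shifts is exactly the bookkeeping the adjunction makes unnecessary. The one idea you are missing is that adjunction replaces the explicit computation of $\C_{\chi',\chi}\star\perv_\chi$ by an $\Ext$ in the $P_{\chi,\chi'}$-equivariant category, after which the $S_{\chi,\chi'}$-invariance you want is automatic from the relation between parabolic-equivariant and Borel-equivariant cohomology.
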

\begin{proof}
Of course, we only need to check one idempotent at a time.  That is, let $\Bi$ be a $\chi$-parabolic word, and $\Bi'$ a $\chi'$ parabolic.  We need only show that 
\[e(\Bi')\mathbb{E}_i^{(a)}e(\Bi)\cong e(\Bi')B_{\C_{\chi',\chi}}e(\Bi) =\Ext_{D^b_{\Gc{\chi}}(V)}(\perv_{\Bi'}, \C_{\chi',\chi}\star \perv_{\Bi}).\]
Note that this isomorphism is only correct up to grading shift due to the need to shift $\C_{\chi',\chi}$ to make it Verdier-self-dual.
Since $\C_{\chi',\chi}\star-$ is the composition of restriction of $\perv_\Bi$ to the $P_\chi\cap P_{\chi'}$-equivariant derived category, with the induction to the $P_{\chi'}$-equivariant derived category, we can use adjunction to show that 
\begin{equation}\label{eq:adjunction}
\Ext_{D^b_{\Gc{\chi}}(V)}(\perv_{\Bi'},  \C_{\chi',\chi}\star \perv_{\Bi})\cong \Ext_{D^b_{\Gc{\chi}\cap \Gc{\chi'}}(V)}(\perv_{\Bi'},\perv_{\Bi})\cong\Ext_{D^b_{B\times \Gz}(V)}(\perv_{\Bi'},\perv_{\Bi})^{S_{\chi',\chi}}
\end{equation}
Of course, by Theorem \ref{thm:T-iso} in the case where $P_\chi=B$, we have:
\begin{equation}\label{eq:B-case}
    \Ext_{D^b_{B\times \Gz}(V)}(\perv_{\Bi'},\perv_{\Bi})\cong e(\Bi')\bT e(\Bi).
\end{equation}
Thus, combining \eqref{eq:adjunction} and \eqref{eq:B-case}, we find that:
\[e(\Bi')\mathbb{E}_i^{(a)}e(\Bi)=(e(\Bi')\bT e(\Bi))^{S_{\chi',\chi}}=\Ext_{D^b_{B\times \Gz}(V)}(\perv_{\Bi'},\perv_{\Bi})^{S_{\chi',\chi}}=e(\Bi')B_{\C_{\chi',\chi}}e(\Bi).\qedhere\]
\end{proof}
Combining Lemma \ref{lem:BFG} and Theorem \ref{thm:E-iso}, we find that:
\begin{corollary}
We have a representation of the category $\tU$ sending $\mu_\chi \mapsto \bT^\chi\mmod$ where any 1-morphism $u$ acts by tensor product with the bimodule $B_{\Phi_{\mathsf{P}}(u)}$, and in particular, $\eE_i^{(a)},\eF_i^{(a)}$ act by the bimodules $\mathbb{E}_i^{(a)},\mathbb{F}_i^{(a)}$.
\end{corollary}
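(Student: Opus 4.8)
The plan is to compose three ingredients already in hand. First I would observe that the $2$-functor $\Phi_{\mathsf{P}}\colon \tU\to \mathsf{Perv}$, followed by the convolution representation of $\mathsf{Perv}$ in $\mathsf{Cat}$ established above, produces a $2$-representation of $\tU$ on $\oplus_\chi D^b_{G_\chi}(V)$ in which a $1$-morphism $u\colon \mu_\chi\to\mu_{\chi'}$ acts by $\Phi_{\mathsf{P}}(u)\star-$ and $2$-morphisms act by the induced maps in the equivariant derived category. Next I would transport this along the equivalences $D^b_{G_\chi}(V)\cong \bT^\chi\dgmod$ of Theorem \ref{thm:generator}: by the commuting square preceding Lemma \ref{lem:BFG}, the functor $\Phi_{\mathsf{P}}(u)\star-$ is carried to $B_{\Phi_{\mathsf{P}}(u)}\Lotimes_{\bT^\chi}-$, so one obtains an action of $\tU$ on $\oplus_\chi \bT^\chi\dgmod$ by tensoring with the bimodules $B_{\Phi_{\mathsf{P}}(u)}$; compatibility of composition with tensor product is exactly Lemma \ref{lem:BFG}, and the $2$-morphisms become honest bimodule maps via $\Ext_{D^b_{G_{\chi'}}(V)}(\perv_{\chi'},-)$.

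The second step is to descend from $\bT^\chi\dgmod$ to the abelian category $\bT^\chi\mmod$. Here I would use that every $1$-morphism of $\tU$ is sent by $\Phi_{\mathsf{P}}$ to a sum of shifts of semisimple perverse sheaves — indeed $\Phi_{\mathsf{P}}(\eE_i^{(a)})$ and $\Phi_{\mathsf{P}}(\eF_i^{(a)})$ are Verdier-self-dual shifts of constant sheaves on the closed orbits $P_{\chi',\chi}$, and arbitrary $1$-morphisms are built from these by composition and direct sum. By the sweetness lemma proved above, each $B_{\Phi_{\mathsf{P}}(u)}$ is then projective as a left and as a right module, so $B_{\Phi_{\mathsf{P}}(u)}\Lotimes_{\bT^\chi}-$ agrees with the underived $B_{\Phi_{\mathsf{P}}(u)}\otimes_{\bT^\chi}-$, is exact, and restricts to a functor $\bT^\chi\mmod\to\bT^{\chi'}\mmod$; the bimodule maps coming from $2$-morphisms restrict as well. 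Finally I would specialize: with $\chi'=\chi^{+i^a}$, Theorem \ref{thm:E-iso} gives $B_{\Phi_{\mathsf{P}}(\eE_i^{(a)})}=B_{\C_{\chi',\chi}}=\mathbb{E}_i^{(a)}$ and likewise $B_{\Phi_{\mathsf{P}}(\eF_i^{(a)})}=\mathbb{F}_i^{(a)}$, which is the last assertion.

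The main obstacle, and the step I would spend the most care on, is verifying that the equivalences of Theorem \ref{thm:generator} intertwine the two actions coherently as $2$-representations, not merely on objects and $1$-morphisms. Concretely, one must check that the square in the diagram preceding Lemma \ref{lem:BFG} commutes compatibly with horizontal composition and with $2$-morphisms, so that transporting structure along $\Ext(\perv_\chi,-)$ really yields a $2$-functor out of $\tU$. This is essentially formal once one notes that $\Ext(\perv_\chi,-)$ is a dg-Morita equivalence, hence a functor of dg- (indeed $(\infty,1)$-) categories, and so transports the entire higher-categorical structure; but spelling this out — and in particular pinning down the grading shifts so that $B_{\C_{\chi',\chi}}=\mathbb{E}_i^{(a)}$ of Theorem \ref{thm:E-iso} holds on the nose rather than up to shift — is where the real bookkeeping lies.
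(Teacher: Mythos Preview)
Your proposal is correct and follows essentially the same route as the paper, which simply records ``Combining Lemma \ref{lem:BFG} and Theorem \ref{thm:E-iso}, we find that:'' before stating the Corollary. Your write-up is a careful unpacking of what that one line means: transport the convolution action through the equivalences of Theorem \ref{thm:generator}, invoke sweetness to pass from derived to underived tensor products, and then read off the ladder bimodules via Theorem \ref{thm:E-iso}. The coherence worry you flag in your final paragraph is legitimate bookkeeping but is treated as routine in the paper; it is absorbed into the dg-Morita formalism and the commuting square you cite.
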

This generalizes \cite[Thm. 9.1]{KLSY}, which is the special case where $\rankp=2$, and avoids the long computation required by the direct proof given in that paper.

\subsection{Restriction to the flag variety}
\label{sec:restr-flag-vari}

Assume now that $v_1\leq  v_2\leq \cdots \leq v_n$.  In this case, the stack $V/\Gz$ contains an open subset where the compositions $f_{i;1}=f_i\cdots f_1$ are injective for all $i$.  On this open subset, \[\operatorname{image}(f_1)\subset \operatorname{image}(f_{2;1})\subset \cdots \subset \operatorname{image}(f_{n;1})\subset \C^n\] gives a partial flag.  As before, we let $\nu=(1,\dots, 1,2, \dots, 2,\dots)\in \Z^n$ be the unique increasing sequence with $i$ occurring $v_i-v_{i-1}$ times.  We can then describe our space of partial flags as exactly $GL_n/P_{\nu}.$ Thus we obtain a functor 
\[\Res\colon  D^{b,\operatorname{mix}}_{\Gc{\chi}}(V)\to D^{b,\operatorname{mix}}_{P_\chi}(GL_n/P_{\nu})\]
by restriction to this open set.  This is, of course, compatible with the natural action of $\Perv$.  

\begin{theorem}
We have an equivalence of categories \[\beta=\Ext_{D^b_{P_{\chi}}(GL_n/P_\nu)}(\Res(\perv_{\chi}), -)\colon D^{b,\operatorname{mix}}_{P_\chi}(GL_n/P_\nu)\cong D^b(\vT^\chi_\nu\gmod)\] such that we have a commutative diagram:
\[\tikz[->,thick]{
\matrix[row sep=18mm,column sep=30mm,ampersand replacement=\&]{
\node (d) {$D^{b,\operatorname{mix}}_{\Gc{\chi}}(V)$}; \& \node (e)
{$ D^{b,\operatorname{mix}}_{P_{\chi}}(G/P_{\nu})$}; \\
\node (a) {$D^b(\bT^\chi\operatorname{-gmod})$}; \& \node (b)
{$D^b(\vT^{\chi}\operatorname{-gmod})$}; \\
};
\draw (a) -- (b) node[below,midway]{$\vT^{\chi}\Lotimes_{\bT^\chi}-$}; 
\draw (d) -- (a) node[left,midway]{$\Ext(\perv_{\chi}, -)$} ; 
\draw (e) -- (b) node[right,midway]{$\beta$}; 
\draw (d) -- (e) node[above,midway]{$\Res$}; 
}\]
\end{theorem}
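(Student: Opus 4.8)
The plan is to mirror the proof of Theorem~\ref{thm:generator}. Let $U\subset V$ be the open locus where the compositions $f_{\rankp-1}\cdots f_i\colon \C^{v_i}\to\C^n$ are injective for every $i$, so that $\operatorname{im}(f_{\rankp-1})\supset\operatorname{im}(f_{\rankp-1}f_{\rankp-2})\supset\cdots$ is a partial flag of type $\nu$ in $\C^n$ and $U/G_0\cong GL_n/P_\nu$ compatibly with the $P_\chi$-action; write $j\colon U/G_0\hookrightarrow V/G_0$ for the open immersion, with closed complement $Z=(V\setminus U)/G_0$, so that $\Res=j^*$. The first step is to observe that $\Res(\perv_\chi)=j^*\perv_\chi$ is a compact generator of $D^b_{P_\chi}(GL_n/P_\nu)$: since $j_*$ is a fully faithful right adjoint to $j^*$, the functor $j^*$ is an essentially surjective Verdier quotient functor with kernel $D^b_Z(V)$, so it carries the compact generator $\perv_\chi$ of $D^b_{G_\chi}(V)$ to a compact generator of $D^b_{G_\chi}(V)/D^b_Z(V)\cong D^b_{P_\chi}(GL_n/P_\nu)$. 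Once its endomorphism dg-algebra is identified with $\vT^\chi_\nu$, dg-Morita theory produces $\beta$, and its mixed refinement, exactly as in Theorem~\ref{thm:generator}.

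The technical core is the computation $\End_{D^b_{G_\chi}(V)/D^b_Z(V)}(\perv_\chi)\cong \vT^\chi_\nu$. By Theorem~\ref{thm:T-iso} the source algebra is $\bT^\chi=\End_{D^b_{G_\chi}(V)}(\perv_\chi)$, and in the Verdier quotient this becomes the quotient of $\bT^\chi$ by the ideal of endomorphisms factoring through an object of $D^b_Z(V)$; it therefore suffices to show that this ideal is the one generated by the violating idempotents $e(\Bi)$, which is by definition the kernel of $\bT^\chi\twoheadrightarrow\vT^\chi_\nu$. I would prove this via a geometric dichotomy. If $\Bi$ is violating its first letter $i_1$ is $<\rankp$, so the minimal nonzero member of the flag $F^{\prec}_\bullet$ is a subspace $W\subset\C^{v_{i_1}}$ concentrated in degree $i_1$; as $f$ raises degree by one, any $f$ stabilizing $W$ must kill it, so $\ker(f_{i_1})\neq 0$ and the composite $f_{\rankp-1}\cdots f_{i_1}$ is not injective, whence $X(\Bi)$ does not meet $(\pi^{\Bi})^{-1}(U)$, $\perv_\Bi$ is supported on $Z$, and $e(\Bi)$ factors through a $Z$-supported sheaf. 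Conversely, $D^b_Z(V)$ is generated by the $\mathbf{IC}_\bQ$ whose orbit misses $U$; from the orbit classification of the appendix one checks that $\mathbb{O}_\bQ\cap U\neq\varnothing$ precisely when every segment of $\bQ$ is flavored, and that when $\bQ$ has an unflavored segment its good word $\Bi_\bQ$ begins with the letter $\ell<\rankp$ ending that segment and so is violating; since $\mathbf{IC}_\bQ$ is a summand of $\perv_{\Bi_\bQ}$ up to shift by Theorem~\ref{thm:all-ICs}, we conclude $D^b_Z(V)=\langle \perv_\Bi : \Bi\ \text{violating}\rangle$. Feeding this into the standard recollement/stratification formalism (as for the analogous quotient $\bvT^\chi$ in \cite[\S4]{Webmerged}) identifies the ideal of maps through $D^b_Z(V)$ with the ideal generated by the violating idempotents.

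Finally I would check the square by a generator argument. Both routes around it are triangulated functors $D^{b,\operatorname{mix}}_{G_\chi}(V)\to D^b(\vT^\chi_\nu\gmod)$, so it is enough to exhibit a natural transformation between them and verify it on the generator. The map induced by $j^*$, namely $\Ext_{D^b_{G_\chi}(V)}(\perv_\chi,\mathcal{M})\to \Ext_{D^b_{P_\chi}(GL_n/P_\nu)}(j^*\perv_\chi,j^*\mathcal{M})$, is $\bT^\chi$-linear and lands in a $\vT^\chi_\nu$-module, hence factors uniquely through $\vT^\chi_\nu\Lotimes_{\bT^\chi}\Ext(\perv_\chi,\mathcal{M})$ by the universal property of extension of scalars; on $\mathcal{M}=\perv_\chi$ it is the identity of $\vT^\chi_\nu$ because $\vT^\chi_\nu\Lotimes_{\bT^\chi}\bT^\chi\cong\vT^\chi_\nu\cong\Ext(j^*\perv_\chi,j^*\perv_\chi)$, so it is an isomorphism everywhere and the square commutes.

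The hard part is the middle step. The geometric dichotomy itself is just the short degree count above together with the appendix's orbit classification; what requires genuine care is passing from ``morphism that dies in the Verdier quotient $D^b_{G_\chi}(V)/D^b_Z(V)$'' to the honest idempotent ideal $\bT^\chi e(\Bi)\bT^\chi$, which needs the stratification cut out by $Z$ to be homologically well behaved — equivalently $\vT^\chi_\nu\Lotimes_{\bT^\chi}\vT^\chi_\nu\cong\vT^\chi_\nu$. This holds here for the same reasons as in \cite{Webmerged} and \cite[\S4]{Webunfurl}, but it is the one point that uses the structure of these algebras rather than formal nonsense.
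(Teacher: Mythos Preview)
Your proof is correct and follows essentially the same route as the paper: the open--closed decomposition $U\hookrightarrow V\hookleftarrow Z$, the geometric dichotomy identifying $D^b_Z(V)$ with the thick subcategory generated by the $\perv_{\Bi}$ for violating $\Bi$ (via the same degree argument you give), recollement to pass to the dg-quotient, and dg-Morita theory to finish. You are in fact more explicit than the paper about two points it leaves implicit or defers to \cite[Cor.~5.4]{Webqui}: the stratifying condition $\vT^\chi_\nu\Lotimes_{\bT^\chi}\vT^\chi_\nu\cong\vT^\chi_\nu$ needed to identify the Verdier quotient with the honest ideal quotient, and the verification of the commuting square via a natural transformation checked on the generator.
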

\begin{proof}
This is essentially a restatement of \cite[Cor. 5.4]{Webqui}, but let us be a bit more careful about the details of this point.  We have a map $\bT^\chi\to \Ext^*(\Res(\perv_{\chi}), \Res(\perv_{\chi}))$ induced by the functor $\Res$.  

If $\Bi$ is violating, then  $(i_1,1)\prec (\rankp,1)$.  For any point in $\Fl(\Bi)$, by assumption, we have $f_{\rankm;i_1}F_{\preceq(i_1,1)}=0$, and so one of $f_k$ is not injective. This shows that  $\Res(\mathscr{F}_{\Bi})=0$.
On the other hand, if $\Res(\mathbf{IC}_{\bQ})=0$, then $\bQ$ must be a multi-segment that includes a segment without $\rankp$, so the corresponding good word $\Bi_{\bQ}$ is violating.  Thus, $\Res(\mathbf{IC}_{\bQ})=0$ if and only if $\mathbf{IC}_{\bQ}$ is a summand of $\mathscr{F}_{\Bi}$.

The usual formalism of recollement shows that $D^{b,\operatorname{mix}}_{P_\chi}(GL_n/P_\nu)$ (after suitable enhancement) is a dg-quotient of $D^{b,\operatorname{mix}}_{\Gc{\chi}}(V)$ by the subcategory of objects killed by $\Res$; by the paragraph above, this is exactly the subcategory generated by $\mathscr{F}_{\Bi}$ for $\Bi$ violating.
Since $\bT^\chi$ is quasi-isomorphic to $\Ext^*(\perv_{\chi}, \perv_{\chi})$ as a formal dg-algebra, taking dg-quotient by the projectives $\bT^\chi e(\Bi)$ for $\Bi$ violating has the effect of simply modding out by the corresponding 2-sided ideal.  Thus, the map above induces an isomorphism 
\[\vT^\chi=\Ext^*_{D^{b,\operatorname{mix}}_{P_\chi}(GL_n/P_\nu)}(\Res(\perv_{\chi}), \Res(\perv_{\chi})).\] By dg-Morita theory, this completes the proof.
\end{proof}
Of course, we can also interpret $D^b_{P_{\chi}}(GL_n/P_\nu)$ as a Hom-category in the equivalent 2-categories $\Flag\cong \Perv$, and this identification intertwines the left regular action on $\Perv$ with the action we've discussed on $D^b_{P_{\chi}}(GL_n/P_\nu)$.  

To fix this equivalence algebraically, we need to describe the image of the identity.  This corresponds to the constant sheaf on $P_\nu/P_\nu\subset GL_n/P_{\nu}$.  Consider the flavored multi-segment where we take the segment $(i,\dots,m)$ exactly $v_i-v_{i-1}$ times, and flavor these with $i$.

The corresponding word is 
\[ \Bi_{\nu}=(m^{(v_1)},(m-1)^{(v_1)},\dots, 1^{(v_1)},m^{(v_2-v_1)},(m-1)^{(v_2-v_1)},\dots, 2^{(v_2-v_1)},\dots).\]

\begin{lemma}\label{lem:constant-sheaf}
We have an isomorphism $\Res(\mathscr{F}_{\Bi_\nu})\cong \C_{P_{\nu}/P_{\nu}}$.  
\end{lemma}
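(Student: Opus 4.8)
The plan is to compute the proper map $\pi^{\Bi_\nu}\colon X(\Bi_\nu)\to V$ directly over the open ``flag--variety locus'' and show it is an isomorphism onto the closed orbit sitting above the base point. Recall that $\Res$ is restriction along the open inclusion $U\hookrightarrow V$, where $U$ is the locus on which all $f_i$ are injective, followed by the descent $D^b_{G_\chi}(U)\to D^b_{P_\chi}(U/G_0)$ and the identification $U/G_0\cong GL_n/P_\nu$ under which the degree--$\rankp$ image flag $\operatorname{im}(f_{\rankp-1}\cdots f_1)\subset\cdots\subset\operatorname{im}(f_{\rankp-1})\subset\C^n$ of a point $f$ becomes its point of $GL_n/P_\nu$. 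Since $\Bi_\nu$ is the good word of $\bQ_\nu$, we know that $\pi^{\Bi_\nu}(X(\Bi_\nu))=\overline{\mathbb{O}_{\bQ_\nu}}$, and since every segment of $\bQ_\nu$ reaches $\rankp$, the orbit $\mathbb{O}_{\bQ_\nu}$ is contained in $U$. So it is enough to understand $\pi^{\Bi_\nu}$ over $U$.

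The heart of the argument is the claim that, for $f\in U$, there is \emph{at most one} $f$-stable flag of type $\Bi_\nu$, and it exists precisely when $f$ lies in $\mathbb{O}_{\bQ_\nu}$. To see uniqueness, note that an $f$-stable flag of type $\Bi_\nu$ refines a chain of $f$-stable submodules $0=M^{(0)}\subset M^{(1)}\subset\cdots\subset M^{(\rankp)}=\C^{\Omega}$ whose graded dimension vectors are read off from the blocks of $\Bi_\nu$, namely $\dim M^{(i)}_p=v_p$ for $p\le i$ and $\dim M^{(i)}_p=v_i$ for $p\ge i$. A dimension count together with the injectivity of the $f_j$ forces $M^{(i)}_p=\C^{v_p}$ for $p\le i$ and $M^{(i)}_p=\operatorname{im}(f_{p-1}\cdots f_i)$ for $p>i$, and the same device pins down the finer subspaces inside each block of $\Bi_\nu$. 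Now the key point: since $G_0$ does not act on the degree--$\rankp$ space $\C^n$, a flag of type $\Bi_\nu$ must have its degree--$\rankp$ components exactly equal to those of $F^\prec_\bullet$, i.e.\ equal to the standard coordinate subspaces; this says $\operatorname{im}(f_{\rankp-1}\cdots f_i)$ is the $i$-th standard subspace of $\C^n$ for all $i$, which is exactly the condition that $f$ lie in $\mathbb{O}_{\bQ_\nu}$. Conversely, for $f\in\mathbb{O}_{\bQ_\nu}$ the forced flag (obtained by transporting $F^\prec_\bullet$ from the base representation via the group action) really is $f$-stable of type $\Bi_\nu$. Hence $\pi^{\Bi_\nu}$ restricts to a bijective morphism from $X(\Bi_\nu)\cap(\pi^{\Bi_\nu})^{-1}(U)$ onto $\mathbb{O}_{\bQ_\nu}$, with explicit inverse $f\mapsto\bigl(f,(\operatorname{im}(f_{p-1}\cdots f_i))_{i,p}\bigr)$, so it is an isomorphism.

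To finish, observe that $\mathbb{O}_{\bQ_\nu}$ is closed in $U$ (the image flag being standard is a closed condition) and open dense in $\pi^{\Bi_\nu}(X(\Bi_\nu))=\overline{\mathbb{O}_{\bQ_\nu}}$, so $(\pi^{\Bi_\nu})^{-1}(U)=(\pi^{\Bi_\nu})^{-1}(\mathbb{O}_{\bQ_\nu})$ is dense in $X(\Bi_\nu)$ and $\dim X(\Bi_\nu)=\dim\mathbb{O}_{\bQ_\nu}$. Therefore $\mathscr{F}_{\Bi_\nu}|_U\cong\C_{\mathbb{O}_{\bQ_\nu}}[\dim\mathbb{O}_{\bQ_\nu}]$, the constant sheaf on the orbit in the perverse normalization; in particular over $U$ the map $\pi^{\Bi_\nu}$ is an honest small resolution and the decomposition theorem contributes nothing. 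Passing to $U/G_0\cong GL_n/P_\nu$, the orbit $\mathbb{O}_{\bQ_\nu}$ is carried to the single point $P_\nu/P_\nu\in GL_n/P_\nu$ (which is $P_\chi$-fixed, consistently with $\Bi_\nu$ being $\chi$-parabolic), and the (perversely normalized) constant sheaf descends to the skyscraper there, which is $\C_{P_\nu/P_\nu}$ up to the evident normalization shift.

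I expect the main obstacle to be the bookkeeping in the middle paragraph: correctly matching the nested blocks of the word $\Bi_\nu$ with the prescribed graded dimension vectors of the $M^{(i)}$, and in particular isolating the observation that the type constraint on the degree--$\rankp$ part -- the one place where no $G_0$-conjugation is available -- is precisely what collapses the image of $\pi^{\Bi_\nu}$ from a positive--dimensional family inside $U$ down to the single closed orbit $\mathbb{O}_{\bQ_\nu}$. Everything else -- properness, the dimension count, and the descent along $U\to GL_n/P_\nu$ -- is formal and parallels the argument already used for the equivalence $\beta$.
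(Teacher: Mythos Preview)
Your proposal is correct and follows essentially the same approach as the paper: both hinge on the observation that the degree-$\rankp$ components of any flag of type $\Bi_\nu$ are fixed (since $G_0$ does not act on $\C^n$), and that $f$-compatibility then forces $\operatorname{im}(f_{\rankp-1}\cdots f_i)$ to be the standard $v_i$-dimensional coordinate subspace. The paper's proof stops there, leaving implicit the uniqueness of the remaining flag data and the resulting isomorphism over $U$; your version makes these explicit via the preimage description and the inverse $f\mapsto(f,(\operatorname{im}(f_{p-1}\cdots f_i))_{i,p})$, which is a welcome elaboration rather than a different argument.
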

\begin{proof}
Consider the space $\C^{v_{i}}$.  If $f$ is injective on $\C^{v_j}$ for all $j<m$, then $f^{m-i}(\C^{v_{i}})$ is a $v_i$-dimensional subspace in $\C^n$.  Furthermore, this subspace must lie in the span of $b_{m,j}$ with $(m,j)\prec (i,v_i),$ since $F_{\preceq (i,v_i)}$ is a submodule containing $\C^{v_i}.$ In the word $\Bi_{\nu}$, we have $(m,1) \preceq \cdots \preceq (m,v_i)\prec (i,v_i)\prec (m,v_{i+1}).$  This shows that the image of $f^{m-i}$ must be exactly the $v_i$ dimensional space in the standard flag; this completes the proof.  
\end{proof}

\begin{corollary}\label{cor:Soergel-T}
The category of graded projective $\vT^\chi_\nu$-modules is equivalent to the category $\Hom_{\Flag}(\chi,\nu)$ of singular Soergel bimodules for the singularity $(\chi,\nu)$.  This equivalence is characterized by 
\begin{enumerate}
    \item sending the diagonal bimodule in $\Hom_{\Flag}(\nu,\nu)$ to the module $\vT^\nu_\nu e(\Bi_{\nu})$,
    \item intertwining the action of $\Flag\cong \Perv$ by ladder bimodules with the left regular action on singular Soergel bimodules.  
    \item intertwining the action of $\Flag$ by induction and restriction functors with the right regular action on singular Soergel bimodules.
\end{enumerate}
\end{corollary}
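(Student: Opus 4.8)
The plan is to combine the equivalence $\beta$ of the preceding theorem with the identification of semisimple equivariant perverse sheaves on a partial flag variety as singular Soergel bimodules, and then to chase the two categorical actions through it.

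First I would pin down which objects on each side correspond to graded projectives. Inside $D^{b,\operatorname{mix}}_{P_\chi}(GL_n/P_\nu)$ consider the full additive subcategory generated under direct sums, grading shifts, and taking summands by $\Res(\perv_\chi)$. By the analysis in the proof of the preceding theorem, every non-violating $\mathbf{IC}_{\bQ}$ restricts to a simple $P_\chi$-equivariant perverse sheaf on $GL_n/P_\nu$ and occurs up to shift in $\Res(\perv_\chi)$, while the violating ones restrict to $0$; hence this subcategory is exactly the category of sums of shifts of semisimple perverse sheaves on $GL_n/P_\nu$. On the algebraic side $\beta=\Ext(\Res(\perv_\chi),-)$ sends $\Res(\perv_{\Bi})$ to the graded projective $\vT^\chi_\nu e(\Bi)$, and these generate $\vT^\chi_\nu$, so $\beta$ restricts to an equivalence of this subcategory with graded projective $\vT^\chi_\nu$-modules. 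Viewing it as (sums of shifts of) semisimple $GL_n$-equivariant perverse sheaves on $GL_n/P_\chi\times GL_n/P_\nu$ and composing with the hypercohomology equivalence $\Perv\cong\Flag$ of \cite[Thm.~6]{Webcomparison} then yields the equivalence with $\Hom_{\Flag}(\chi,\nu)$, compatibly with gradings.

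It remains to verify the three characterizing properties. For (1): when $\chi=\nu$ the diagonal bimodule is the convolution identity, which under \cite[Thm.~6]{Webcomparison} is the constant sheaf $\C_{P_\nu/P_\nu}$; by the preceding lemma this is $\Res(\perv_{\Bi_{\nu}})$, which $\beta$ carries to $\vT^\nu_\nu e(\Bi_{\nu})$. For (2): the commutative square of the preceding theorem, the compatibility of $\Res$ with the $\Perv$-action, and the identification (via Theorem \ref{thm:E-iso} and the corollary following it) of the $\Perv$-action on $\bT^\chi$-modules with the ladder bimodule action together show that the left regular action of $\Flag\cong\Perv$ — convolution on the $GL_n/P_\chi$-factor, which corresponds under $\Perv\cong\Flag$ to the left regular action on singular Soergel bimodules — matches the ladder action on $\vT^\chi_\nu$-modules. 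For (3): the right regular action of $\Flag$ — convolution on the $GL_n/P_\nu$-factor, equivalently pullback and pushforward along the projections $GL_n/P_\nu\to GL_n/P_{\nu'}$ — corresponds under $\beta$, using the compatibility of the open restriction $\Res$ with varying the black dimension vector, to the categorical $\mathfrak{sl}_m$-action by the induction and restriction functors changing the number of black strands, as established in \cite{Webqui,Webmerged}. Finally, since every object of $\Hom_{\Flag}(\chi,\nu)$ is, up to direct summands and grading shifts, obtained from the diagonal object by iterated application of the two regular actions of $\Flag$, and the equivalence is additive, properties (1)--(3) determine it.

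I expect property (3) to be the main obstacle: (1) and (2) are essentially bookkeeping on top of machinery already assembled here, whereas matching the right regular singular Hecke action with induction and restriction requires varying the black parabolic $P_\nu$ rather than the red parabolic $P_\chi$, and one must check that $\Res$ — restriction to the open locus $GL_n/P_\nu\subset V/G_0$ — is compatible both with the geometric projections $GL_n/P_\nu\to GL_n/P_{\nu'}$ and with the KLRW bimodules implementing these functors; this is, in essence, the comparison carried out in \cite{Webqui}.
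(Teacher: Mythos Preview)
Your proposal is correct and follows exactly the route the paper intends: the paper gives no separate proof for this corollary, treating it as immediate from the preceding discussion (the equivalence $\beta$, the interpretation of $D^b_{P_\chi}(GL_n/P_\nu)$ as a Hom-category in $\Flag\cong\Perv$, and the lemma identifying $\Res(\perv_{\Bi_\nu})$ with the constant sheaf on the point). You have unpacked precisely those ingredients, and your honest flagging of (3) as the least self-contained step matches the paper's reliance on \cite{Webqui,Webmerged} at that point.
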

This generalizes the main theorem of \cite{KSred}.  Of course, the category of Soergel bimodules has a ``reversing functor'' where one swaps the left and right actions; this is an equivalence $\Hom_{\Flag}(\chi,\nu)\cong \Hom_{\Flag}(\nu,\chi)$ which is anti-monoidal.  Applying Corollary \ref{cor:Soergel-T}, we find that:
\begin{corollary}\label{cor:chi-nu}
We have a Morita equivalence between $\vT^\chi_\nu$ and $\vT^\nu_\chi$ which swaps the action of $\Flag$ via ladder bimodules with the action via induction and restriction functors.  
\end{corollary}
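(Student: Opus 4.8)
The plan is to obtain the equivalence as a formal consequence of Corollary \ref{cor:Soergel-T} applied twice, glued together by the reversing functor on singular Soergel bimodules. Applying Corollary \ref{cor:Soergel-T} to the pair $(\chi,\nu)$ identifies the category of graded projective $\vT^\chi_\nu$-modules with $\Hom_{\Flag}(\chi,\nu)$; applying it to the pair $(\nu,\chi)$ identifies the graded projective $\vT^\nu_\chi$-modules with $\Hom_{\Flag}(\nu,\chi)$. The reversing functor supplies an equivalence $\Hom_{\Flag}(\chi,\nu)\cong \Hom_{\Flag}(\nu,\chi)$, since reversing a singular Soergel bimodule of bi-type $(\chi,\nu)$ produces one of bi-type $(\nu,\chi)$. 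Composing these three equivalences yields an equivalence of additive categories between graded projective $\vT^\chi_\nu$-modules and graded projective $\vT^\nu_\chi$-modules. Because each of these algebras is (up to graded Morita equivalence) the endomorphism algebra of a projective generator appearing in the relevant category, such an equivalence of projective module categories extends, by standard (graded) Morita theory, to an equivalence of the full module categories, i.e.\ a graded Morita equivalence $\vT^\chi_\nu \sim \vT^\nu_\chi$.

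It then remains to track the $\Flag$-actions through this composite. On the Soergel side there are two actions of $\Flag\cong\Perv$ on $\Hom_{\Flag}(\chi,\nu)$: the left regular action (composition on the $\chi'\to\chi$ side) and the right regular action (composition on the $\nu\to\nu'$ side). The reversing functor is anti-monoidal and exchanges the left and right bimodule structures, so it intertwines the left regular action on $\Hom_{\Flag}(\chi,\nu)$ with the right regular action on $\Hom_{\Flag}(\nu,\chi)$, and conversely. Now invoke the characterization of Corollary \ref{cor:Soergel-T}: clause (2) says that on the $\vT^\chi_\nu$ side the left regular action corresponds to the action by ladder bimodules, while clause (3) says that on the $\vT^\nu_\chi$ side the right regular action corresponds to the action by induction and restriction functors. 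Chasing a ladder-bimodule operator through the composite — from $\vT^\chi_\nu$-modules to $\Hom_{\Flag}(\chi,\nu)$ (where it becomes the left regular action), across the reversal (where it becomes the right regular action on $\Hom_{\Flag}(\nu,\chi)$), and back to $\vT^\nu_\chi$-modules (where it becomes induction/restriction) — gives exactly the asserted matching of actions.

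The main point requiring care is the bookkeeping of ``left'' versus ``right.'' One must make sure that the characterization in Corollary \ref{cor:Soergel-T} pins down the equivalence compatibly with \emph{both} $\Flag$-module structures simultaneously — this is precisely what clauses (1)--(3) there accomplish — and that the anti-monoidality of the reversing functor is exactly the statement that it interchanges these two structures without introducing an extra twist or grading shift. Once this is set up correctly, no new computation is needed: the content of the corollary is purely the organization of the two applications of Corollary \ref{cor:Soergel-T} together with the reversal.
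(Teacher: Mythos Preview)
Your proof is correct and follows essentially the same approach as the paper: the paper's argument is precisely to compose the two instances of Corollary~\ref{cor:Soergel-T} via the reversing functor on singular Soergel bimodules, noting that this functor is anti-monoidal and hence swaps the left and right regular actions. Your write-up spells out the bookkeeping of the two $\Flag$-actions more explicitly than the paper does, but there is no substantive difference in method.
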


Of course, this Morita equivalence is realized by a bimodule over these two algebras.  It should be possible to capture this equivalence via a diagrammatic bimodule, but the author has not yet been successful in producing a workable description of it.  One important special case is that relating $\vT^\nu_\nu$ to itself.  This is, of course, true tautologically, but there are two ways of relating the ring $e(\Bi_{\nu})\vT^\nu_\nu e(\Bi_{\nu})$ to the ring $H^*_{P_{\nu}\times P_{\nu}}(P_\nu)$.  The one naturally arising from the left action of $P_{\nu}$ is given by polynomials on the red strands, which are symmetrized on each strand.  In the diagram, we insert symmetric polynomials in alphabets with sizes given by $v_1,v_2-v_1,\dots v_m-v_{m-1}$ at each point marked by a $*$ on the red strands:
\[   \tikz[baseline, xscale=2, yscale=.8]{
\draw[wei] (-2,-1) to[out=90,in=-90] node[midway,fill=white,circle, draw,thick]{$*$} node[at end,above,scale=.8]{$v_1$} (-2,1);
\draw[very thick] (-1.5,-1)to[out=90,in=-90] node[at start,below,scale=.8]{$(m-1)^{(v_1)}$}(-1.5,1);
\node at (-1,0) {$\cdots$};
\node at (1,0) {$\cdots$};
\node at (2,0) {$\cdots$};
\draw[very thick] (-.5,-1)to[out=90,in=-90] node[at start,below,scale=.8]{$1^{(v_1)}$}(-.5,1);
\draw[wei] (0,-1) to[out=90,in=-90]node[at end,above,scale=.8]{$v_2-v_1$} node[midway,fill=white,circle, draw,thick]{$*$} (0,1);
\draw[very thick] (.5,-1)to[out=90,in=-90] node[at start,below,scale=.8]{$(m-1)^{(v_2-v_1)}$}(.5,1);
\draw[very thick] (1.5,-1)to[out=90,in=-90] node[at start,below,scale=.8]{$2^{(v_2-v_1)}$}(1.5,1);
\draw[wei] (2.5,-1) to[out=90,in=-90] node[midway,fill=white,circle, draw,thick]{$*$} node[at end,above,scale=.8]{$v_m-v_{m-1}$} (2.5,1);
\draw[very thick] (3,-1)to[out=90,in=-90] node[at start,below,scale=.8]{$(m-1)^{(v_m-v_{m-1})}$}(3,1);
   }\]That arising from the right action is given by the same polynomials on the rightmost strand with each label:
\[   \tikz[baseline, xscale=2, yscale=.8]{
\draw[wei] (-2,-1) to[out=90,in=-90] node[at end,above,scale=.8]{$v_1$} (-2,1);
\draw[very thick] (-1.5,-1)to[out=90,in=-90] node[at start,below,scale=.8]{$(m-1)^{(v_1)}$}(-1.5,1);
\node at (-1,0) {$\cdots$};
\node at (1,0) {$\cdots$};
\node at (2,0) {$\cdots$};
\draw[very thick] (-.5,-1)to[out=90,in=-90] node[midway,fill=white,circle, draw,thick]{$*$} node[at start,below,scale=.8]{$1^{(v_1)}$}(-.5,1);
\draw[wei] (0,-1) to[out=90,in=-90]node[at end,above,scale=.8]{$v_2-v_1$}  (0,1);
\draw[very thick] (.5,-1)to[out=90,in=-90] node[at start,below,scale=.8]{$(m-1)^{(v_2-v_1)}$}(.5,1);
\draw[very thick] (1.5,-1)to[out=90,in=-90] node[midway,fill=white,circle, draw,thick]{$*$} node[at start,below,scale=.8]{$2^{(v_2-v_1)}$}(1.5,1);
\draw[wei] (2.5,-1) to[out=90,in=-90] node[at end,above,scale=.8]{$v_m-v_{m-1}$} (2.5,1);
\draw[very thick] (3,-1)to[out=90,in=-90] node[midway,fill=white,circle, draw,thick]{$*$} node[at start,below,scale=.8]{$(m-1)^{(v_m-v_{m-1})}$}(3,1);
   }\]
Geometrically, these correspond to polynomials in the Chern classes of tautological bundles: in the first case, of the associated bundles for the action of $P_{\nu}$ on simple subquotients of $\C^n$ under the left action of $P_{\nu}$; in the second case on $\C^{v_k}/f(\C^{v_{k-1}})$.  These are the same, since as discussed in the proof of Lemma \ref{lem:constant-sheaf}, the corresponding vector bundles are isomorphic.   
 \section{Gelfand-Tsetlin modules}

Let $U=U(\mathfrak{gl}_n)$.  As mentioned in the introduction, we let $\Gamma\subset U(\mathfrak{gl}_n)$\notation{$\Gamma$}{The Gelfand-Tsetlin subalgebra of $U$.} be the {\bf Gelfand-Tsetlin subalgebra} generated by the centers $Z_k=Z(U(\mathfrak{gl}_k))$ for $k=1,\dots, n$ where $\mathfrak{gl}_k\subset \mathfrak{gl}_n$ is embedded as the top left corner.   In this section, we'll discuss how $U(\mathfrak{gl}_n)$ and its representation theory relate to the algebras $\bT^\chi$ in the case where $v_i=i$ for $i=1,\dots, n$.  In this case,  $\Omega=\{(i,j) \mid 1\leq j\leq i\leq n\}$.

For the dominant weight $\chi$, we have a maximal ideal $\mathfrak{m}_\chi$ of $Z_n $ defined by the kernel of the action on the Verma module with highest weight
\[\chi-\rho=(\chi_1-1,\dots, \chi_n-n).\]

\begin{definition}
A {\bf Gelfand-Tsetlin module} is a finitely generated $U(\mathfrak{gl}_n)$-module on which the action of $\Gamma$ is locally finite. 
\end{definition}

Of course, by standard commutative algebra, if $M$ is a Gelfand-Tsetlin module then $M$ breaks up as a direct sum over the maximal ideals in $\MaxSpec(\Gamma)$.  Every maximal ideal of $\Gamma$ is generated by maximal ideals in $Z_k$ for each $k$, which we index with an unordered $k$-tuple $\la_k=(\la_{k,1},\dots, \la_{k,k})$; as above, we match this with the  maximal ideal in $Z_k$ acts trivially on the Verma module over $U(\mathfrak{gl}_k)$ with highest weight $\la_k-\rho_k=(\la_{k,1}-1,\dots, \la_{k,k}-k).$  Let $\mGT_{\la}\subset \Gamma$ be the corresponding maximal ideal.
 Thus, for any Gelfand-Tsetlin module, we have a decomposition \[M=\bigoplus_{\la \in \MaxSpec(\Gamma)}\Wei_\la(M)\hspace{5mm}\text{ where }\hspace{5mm}
 \Wei_\la(M)=\{m\in M\mid \mGT_\la^Nm=0 \text{ for } N\gg 0\}.\notation{$\Wei_\la$}{The weight functor $ \Wei_\la(M)=\{m\in M\mid \mGT_\la^Nm=0 \text{ for } N\gg 0\}$.}\]

\begin{remark}
To help the reader fix notation in their mind, this means that the maximal ideals that appear in finite-dimensional modules with $\chi\in \Z^n$ are given by $\la_k\in \Z^k$ with $\la_n=\chi$ that satisfy
\begin{equation}
\la_{k+1,1}\leq \la_{k,1}< \la_{k+1,2}\leq \la_{k,2}< \cdots <\la_{k+1,k}\leq \la_{k,k}<\la_{k+1,k+1} \label{eq:GT}
\end{equation}  for $k=1,\dots, n-1$.  Readers will recognize this as the condition that  $\la_k-\rho_k$ form a Gelfand-Tsetlin pattern.  Under this correspondence,  the trivial module gives $\la_k=(1,2,\dots, k).$ 
\end{remark} 

\begin{definition}
Let $\MaxSpec_{\Z}(\Gamma)\subset \MaxSpec(\Gamma)$ be the subset where $\la_{k,i}\in \Z$ for all $(k,i)\in \Omega$, and $\MaxSpec_{\Z,\chi}(\Gamma)\subset\MaxSpec_{\Z}(\Gamma)$  the subset where $\la_{n,i}=\chi_i$ for all $i$.  An {\bf integral Gelfand-Tsetlin module} is one where $\Wei_\la(M)=0$ if $\la\notin\MaxSpec_{\Z}(\Gamma)$.  

 Let \notation{$\mathcal{GT}_\chi$}{The category of integral Gelfand-Tsetlin modules with central character $\chi$.}$\mathcal{GT}_\chi$ be the category of integral Gelfand-Tsetlin modules over $U(\mathfrak{gl}_n)$ on which the ideal $\mathfrak{m}_\chi\subset Z_n$ acts nilpotently, i.e. those where $\Wei_\la(M)=0$ if $\la\notin\MaxSpec_{\Z,\chi}(\Gamma)$. 
\end{definition}

\begin{lemma}\label{lem:GT-action}
Let $V,M$ be $U(\mathfrak{gl}_n)$-modules with $M$ Gelfand-Tsetlin  and $V$ finite-dimensional.  The module $V\otimes M$ is Gelfand-Tsetlin, and if $M\in \mathcal{GT}_\chi$ then $V\otimes M$ is a sum of objects in $\mathcal{GT}_{\chi+\mu}$ for $\mu$ a weight of $V$.  
\end{lemma}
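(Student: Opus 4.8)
The plan is to deduce both claims from how the centres $Z_k$ interact with the coproduct of $U(\mathfrak{gl}_n)$, combined with Kostant's theorem on the tensor product of a finite-dimensional and an arbitrary module over a reductive Lie algebra. First one checks that $V\otimes M$ is again finitely generated over $U(\mathfrak{gl}_n)$: the diagonal module $V\otimes U(\mathfrak{gl}_n)$ (with $\mathfrak{gl}_n$ acting on $V$ and by left multiplication on $U(\mathfrak{gl}_n)$) is free as a left $U(\mathfrak{gl}_n)$-module on a basis of $V$, so $V\otimes M$ is generated by the finitely many elements $v_p\otimes m_q$ for $v_p$ a basis of $V$ and $m_q$ generators of $M$. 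Moreover, since $\Gamma$ is generated by the pairwise commuting subalgebras $Z_1,\dots,Z_n$, such a finitely generated module is Gelfand--Tsetlin precisely when each $Z_k$ acts locally finitely on it.

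To get local finiteness of $Z_k$ on $N:=V\otimes M$, I would restrict the $\mathfrak{gl}_n$-action to $\mathfrak{gl}_k\subset\mathfrak{gl}_n$. On restriction $V$ stays finite-dimensional and $Z_k\subseteq\Gamma$ still acts locally finitely on $M$, so $M|_{\mathfrak{gl}_k}=\bigoplus_\phi M[\phi]$ decomposes into $\mathfrak{gl}_k$-submodules indexed by the generalized $Z_k$-central characters $\phi$. The operator by which $z\in Z_k$ acts on $N$ is $\Delta(z)\in U(\mathfrak{gl}_k)\otimes U(\mathfrak{gl}_k)$, which preserves each $V\otimes M[\phi]$; Kostant's theorem for the reductive algebra $\mathfrak{gl}_k$ then shows $Z_k$ acts on each $V\otimes M[\phi]$ with only finitely many generalized central characters, in particular locally finitely. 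Summing over $\phi$ and then over $k$, the algebra $\Gamma$ acts locally finitely on $N$, so $N$ is Gelfand--Tsetlin; and since the weights of $V$ and every $\la_k$ occurring in $M$ are integral, the same holds for $N$, so it is an \emph{integral} Gelfand--Tsetlin module.

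For the second claim, run the same analysis with $k=n$, where $Z_n$ is genuinely central in $U(\mathfrak{gl}_n)$. If $M\in\mathcal{GT}_\chi$ then $Z_n$ has the single generalized central character $\mathfrak{m}_\chi$ on $M$, and Kostant's theorem identifies the generalized $Z_n$-central characters of $N$ with a subset of $\{\mathfrak{m}_{\chi+\mu}\mid \mu\text{ a weight of }V\}$, where $\mathfrak{m}_{\chi+\mu}$ is read through the Harish-Chandra isomorphism as the maximal ideal attached to the dot-orbit of $\chi+\mu-\rho$. Because $Z_n$ is central, the generalized-central-character decomposition $N=\bigoplus_\mu N_\mu$ is a decomposition by $U(\mathfrak{gl}_n)$-submodules, and by the previous paragraph each $N_\mu$ is an integral Gelfand--Tsetlin module on which $\mathfrak{m}_{\chi+\mu}$ acts nilpotently, i.e. $N_\mu\in\mathcal{GT}_{\chi+\mu}$.

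The point that needs the most care is the restriction to $\mathfrak{gl}_k$ in the middle step: $Z_k$ is \emph{not} central in $U(\mathfrak{gl}_n)$ when $k<n$, so the summands $M[\phi]$ and the resulting splitting of $N$ are only $\mathfrak{gl}_k$-module decompositions -- which is exactly what is needed to read off local finiteness of $Z_k$ -- and Kostant's finiteness statement must be invoked over $\mathfrak{gl}_k$, not over $\mathfrak{gl}_n$. Once this bookkeeping is straight, the remaining verifications (the freeness used for finite generation, the integrality, and the fact that the block decomposition in the last step is by $U(\mathfrak{gl}_n)$-submodules) are routine.
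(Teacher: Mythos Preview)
Your argument is essentially the paper's: reduce to local finiteness under each $Z_k$ separately by restricting to $\mathfrak{gl}_k$, then invoke the Kostant/Bernstein--Gelfand theorem (the paper cites \cite[Cor.~2.6(ii) and Thm.~2.5(ii)]{BeGe}) that tensoring with a finite-dimensional module preserves local $Z$-finiteness and shifts generalized central characters by weights of $V$. One caution on phrasing: having only finitely many generalized central characters does \emph{not} by itself imply local finiteness (think of $\C[x]$ acting on itself), so your ``in particular'' is backwards---but the Kostant/Bernstein--Gelfand result gives local $Z_k$-finiteness directly, so the conclusion stands.
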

\begin{proof}
Since $\Gamma$ is commutative, it is enough to show local finiteness separately under a set of generating subalgebras.  That is, it suffices to check that $V\otimes M$ is locally finite under $Z_k=Z(U(\mathfrak{gl}_k))$ for all $k\leq n$.  This follows from \cite[Cor. 2.6(ii)]{BeGe}, which shows that $V\otimes M$ is locally finite under $Z_k$ whenever $M$ is locally finite under $Z_k$.  

By \cite[Thm. 2.5(ii)]{BeGe}, the set of modules with integral central character is closed under tensor product, so if $M$ is integral Gelfand-Tsetlin, then $V\otimes M$ is as well.  Finally, applying \cite[Thm. 2.5(ii)]{BeGe} again implies the statement on characters under $Z_n$.    
\end{proof}

In particular, we have functors $E(M)=\C^n\otimes M$ and $F(M)=(\C^n)^*\otimes M$\notation{$E(M),F(M)$}{The functors $E(M)=\C^n\otimes M$ and $F(M)=(\C^n)^*\otimes M$.} on the category of Gelfand-Tsetlin modules with integral central character $\chi\in \Z^n$.
Lemma \ref{lem:GT-action} shows that:
\begin{corollary}
If $M\in \mathcal{GT}_\chi$, then $E(M)$ is a sum of objects in $\mathcal{GT}_{\chi^{+i}}$ for $i\in \{\chi_1,\dots, \chi_n\}$ and $F(M)$  is a sum of objects in $\mathcal{GT}_{\chi^{-i}}$ for $i\in \{\chi_1-1,\dots, \chi_n-1\}$ 
\end{corollary}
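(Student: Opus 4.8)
The plan is to deduce the corollary directly from Lemma~\ref{lem:GT-action}, applied to the finite-dimensional modules $V=\C^n$ and $V=(\C^n)^*$; essentially all that remains afterwards is to translate the conclusion of that lemma into the $\chi^{\pm i}$ notation introduced earlier.

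First I would record that for an arbitrary integral $n$-tuple $\eta$ the block $\mathcal{GT}_\eta$ is well defined and depends only on the multiset of entries of $\eta$. Indeed $\mathfrak{m}_\eta\subset Z_n$ is the annihilator of the Verma module $M(\eta-\rho)$, and $M(\nu)$, $M(\nu')$ have the same central character precisely when $\nu+\rho$ and $\nu'+\rho$ lie in one orbit of $W=S_n$ acting by permuting coordinates; taking $\nu=\eta-\rho$ this says $\mathfrak{m}_\eta=\mathfrak{m}_{\eta'}$ iff $\eta$ and $\eta'$ are permutations of one another. In particular $\mathcal{GT}_\eta=\mathcal{GT}_{\eta^{\mathrm{dom}}}$ for $\eta^{\mathrm{dom}}$ the increasing rearrangement, so the symbols $\mathcal{GT}_{\chi+\mu}$ produced by Lemma~\ref{lem:GT-action} make sense even when $\chi+\mu$ is not dominant.

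Next, the $\mathfrak{gl}_n$-weights of $\C^n$ are $\epsilon_1,\dots,\epsilon_n$, so Lemma~\ref{lem:GT-action} gives that $E(M)=\C^n\otimes M$ is a direct sum of objects lying in the blocks $\mathcal{GT}_{\chi+\epsilon_j}$, $j=1,\dots,n$. Writing $i=\chi_j$, the tuple $\chi+\epsilon_j$ is $\chi$ with one copy of the value $i$ replaced by $i+1$; by the previous paragraph its block depends only on $i$, and its increasing rearrangement is exactly $\chi^{+i}$ --- which exists, since $i$ occurs in $\chi$ and increasing its last occurrence to $i+1$ leaves the tuple increasing (the following entry already exceeds $i$). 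As $j$ runs over $1,\dots,n$ the value $\chi_j$ runs over $\{\chi_1,\dots,\chi_n\}$, which is the claim for $E$. The case $F(M)=(\C^n)^*\otimes M$ is identical using the weights $-\epsilon_1,\dots,-\epsilon_n$: here $\chi-\epsilon_j$ has one copy of $\chi_j$ replaced by $\chi_j-1$, i.e.\ it equals $\chi^{-(\chi_j-1)}$ (which exists since $\chi_j$ occurs in $\chi$), and $\chi_j-1$ ranges over $\{\chi_1-1,\dots,\chi_n-1\}$.

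I do not expect a genuine obstacle. The only point needing attention is the case of repeated entries in $\chi$, where distinct indices $j$ contribute the same central character and the naive tuple $\chi\pm\epsilon_j$ need not be dominant, so one must pass to its $S_n$-orbit before identifying it with some $\chi^{\pm i}$ --- which is precisely what the first step above licenses. One should also keep in mind that the ``sum of objects'' phrasing already incorporates the part of Lemma~\ref{lem:GT-action} asserting that $V\otimes M$ stays integral and acts nilpotently under the relevant maximal ideal of $Z_n$ on each summand, so that each summand genuinely lands in one of the listed blocks rather than in a larger one.
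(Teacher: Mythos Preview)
Your proposal is correct and is exactly the argument the paper intends: the paper states the corollary immediately after Lemma~\ref{lem:GT-action} with the remark ``Lemma~\ref{lem:GT-action} shows that,'' leaving the translation from $\chi+\mu$ (for $\mu$ a weight of $\C^n$ or $(\C^n)^*$) to the $\chi^{\pm i}$ notation implicit. You have simply made that translation, and the handling of the non-dominant $\chi\pm\epsilon_j$ via the $S_n$-invariance of the central character, explicit.
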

Since decomposing according to the spectrum of $Z_n$ is functorial, we can define functors $E_i$ (resp.\ $F_i$) such that for $M\in\mathcal{GT}_{\chi}$, we have that $E_i(M)\subset E(M)$ (resp. $F_i(M)\subset F(M)$)\notation{$E_i(M),F_i(M)$}{The maximal summands of $E,F$ that send $\mathcal{GT}_{\chi}$ into the category $\mathcal{GT}_{\chi^{\pm i}}$.} is the unique maximal summand that lies in the category $\mathcal{GT}_{\chi^{\pm i}}$.  These are also the generalized weight spaces of the Casimir operator on $V\otimes M$ (this follows, for example, by \cite[Lem. 4.5]{BSW3}).  This gives a decomposition of these functors acting on integral Gelfand-Tsetlin modules:
\[E=\bigoplus_{i\in \Z} E_i\qquad F=\bigoplus_{i\in \Z} F_i.\] 
It is well-known that the functors $E_i, F_i$ define categorical actions (for example, see the discussion of category $\mathcal{O}$ in \cite[\S 7.5]{CR04}) on various categories of $\mathfrak{gl}_n$-modules, but due to artifacts of the proofs, it is not obvious that these apply to the category of Gelfand-Tsetlin modules.  Recent work of Brundan, Savage and the author shows how to unify these proofs:
\begin{lemma}
The functors $E$ and $F$ define an action of the {\bf level 0 Heisenberg category} of \cite{BruHei} (also called the {\bf affine oriented Brauer category} in \cite{BCNR}). The functors $E_i$ and $F_i$ for $i\in \Z$ give an induced categorical Kac-Moody action of $\mathfrak{sl}_\infty$, and these actions are related by \cite[Th. A]{BSW3}.
\end{lemma}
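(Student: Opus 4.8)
The plan is to invoke the two results cited in the statement after checking that their hypotheses hold in our setting. First, recall from Lemma~\ref{lem:GT-action} that the full subcategory $\mathcal{GT}=\bigoplus_\chi \mathcal{GT}_\chi$ of $U(\mathfrak{gl}_n)$-modules is preserved by $E=\C^n\otimes-$ and $F=(\C^n)^*\otimes-$; both functors are exact since $\C^n$ and $(\C^n)^*$ are finite dimensional, and they are biadjoint, with the units and counits of $E\dashv F$ and $F\dashv E$ given by the evaluation and coevaluation maps of the dual pair $(\C^n,(\C^n)^*)$ of $\mathfrak{gl}_n$-modules. This is exactly the data out of which one builds the action of the affine oriented Brauer category of \cite{BCNR}, equivalently the level~$0$ Heisenberg category of \cite{BruHei}: the ``cup'' and ``cap'' are the (co)evaluation maps just named; the ``dot'' natural transformation $x\colon E\Rightarrow E$ is given on $\C^n\otimes M$ by the split Casimir $\sum_{i,j} e_{ij}\otimes(e_{ji}\cdot-)$; and the ``crossing'' $E^2\Rightarrow E^2$ is the flip of the two copies of $\C^n$. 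That these satisfy the defining relations --- the zig-zag relations, the degenerate affine Hecke relations on $E^{\otimes k}$, and the bubble relations, the last of which forces the level to equal $\dim\C^n-\dim(\C^n)^*=0$ --- is verified exactly as for the action on the whole category of $\mathfrak{gl}_n$-modules in \cite{BCNR}; since $\mathcal{GT}$ is a full subcategory closed under the relevant functors and natural transformations, the action simply restricts. One must also check the mild finiteness conditions needed to make sense of this action; the only nonformal point is that $x$ acts locally finitely on $E(M)$, which holds because $x$ differs from the Casimir of $\mathfrak{gl}_n$ acting on $\C^n\otimes M$ by operators coming from $Z_n$, and these act locally finitely on $\C^n\otimes M$ for $M$ Gelfand-Tsetlin, again by Lemma~\ref{lem:GT-action}.

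Given the level~$0$ Heisenberg action, the categorical $\mathfrak{sl}_\infty$-action is obtained by applying \cite[Th.~A]{BSW3}. The generalized eigenspace decomposition of the dot $x$ on $E$ refines $E$ as a direct sum $E=\bigoplus_i E'_i$, and likewise $F=\bigoplus_i F'_i$ from the dot on the downward strand; \cite[Th.~A]{BSW3} then asserts that $\{E'_i,F'_i\}$ together with the induced $2$-morphisms defines an action of the Kac--Moody $2$-category $\mathfrak{U}(\mathfrak{sl}_\infty)$, and that the original Heisenberg action is recovered from it in the manner described there. It remains to identify $E'_i$ with the block functor $E_i$ introduced before the lemma, i.e. to see that the summand of $E(M)$ on which $x$ acts with the generalized eigenvalue attached to $i$ is exactly the summand lying in $\mathcal{GT}_{\chi^{+i}}$ when $M\in\mathcal{GT}_\chi$. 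This is the computation already cited in the text: up to an additive constant and rescaling, $x$ on $\C^n\otimes M$ is the action of $\tfrac12(\Delta(\mathrm{Cas})-\mathrm{Cas}\otimes1-1\otimes\mathrm{Cas})$, whose generalized eigenvalues on $\C^n\otimes M$ are governed by the central characters of the $Z_n$-summands, so the eigenvalue bookkeeping matches the block bookkeeping via \cite[Lem.~4.5]{BSW3}. In the integral case $\chi\in\Z^n$ these eigenvalues all lie in $\Z$, which is why the relevant Kac--Moody algebra is $\mathfrak{sl}_\infty=\mathfrak{sl}_{\Z}$.

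The only real obstacle here is bookkeeping rather than mathematics: one must make sure that \cite{BCNR} and \cite[Th.~A]{BSW3} apply verbatim to $\mathcal{GT}_\chi$, whose Hom-spaces need not be finite dimensional, so that one works with the appropriate idempotent-completed (and, where pro-modules intervene, pro-completed) enhancement --- matching the passage to pro-Gelfand-Tsetlin modules used elsewhere in the paper. Since $E$ and $F$ are exact, commute with the decomposition $M=\bigoplus_\la\Wei_\la(M)$, and the local finiteness of $x$ has been arranged above, both cited theorems go through, and the proof amounts to spelling out these checks and then quoting the two results.
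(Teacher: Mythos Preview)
The paper does not actually give a proof of this lemma: it is stated without a proof environment, treated as a direct consequence of the cited results \cite{BruHei}, \cite{BCNR}, and \cite[Th.~A]{BSW3}, with the only additional remark being the sentence that follows about the explicit formulas in \cite[Th.~4.11]{BSW3}. Your proposal is a correct and reasonable unpacking of the verifications one must make to apply those references, and it is entirely in line with what the paper intends---in particular, your identification of the eigenspace functors $E_i'$ with the block functors $E_i$ via the Casimir is exactly the argument the paper alludes to in the paragraph before the lemma when it cites \cite[Lem.~4.5]{BSW3}.

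One small wording issue: you write that $E$ and $F$ ``commute with the decomposition $M=\bigoplus_\la\Wei_\la(M)$''; of course they do not literally commute with the projections $\Wei_\la$, since they move between different Gelfand--Tsetlin weight spaces. What you presumably mean (and what is needed) is just that $E$ and $F$ preserve the category $\mathcal{GT}$, which is already Lemma~\ref{lem:GT-action}. This is cosmetic and does not affect the argument.
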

Note that the formulas of \cite[Th. 4.11]{BSW3} give explicit formulas for the action of $\tU$ in terms of swapping factors in tensor products and the Casimir element, but these formulas are not needed for our purposes.  

Let us note another perspective on this result.  Recall, that we call a $U$-$U$ bimodule {\bf Harish-Chandra} if it is locally finite for the adjoint action and {\bf pro-Harish-Chandra} if it satisfies this property in the topological sense. 

The most important example of a Harish-Chandra bimodule is the tensor product $V\otimes U$ for $V$ a finite-dimensional $U$-module, with the right action by right multiplication on the second factor, and the left action via the coproduct. That is, for $v\in \C^n,u\in U, X_1,X_2\in \mathfrak{gl}_n$, we have:
\[X_1\cdot (v\otimes u)\cdot X_2=X_1v\otimes uX_2+v\otimes X_1 u X_2.\]  The resulting bimodule structure on $V\otimes U$  is Harish-Chandra.   Thus, the functor $E_i\colon \mathcal{GT}_{\chi}\to \mathcal{GT}_{\chi^{+i}}$ is given by tensor product with a pro-Harish-Chandra $U$-$U$ bimodule $\mathscr{E}_i$, formed by completing the left and right actions of $Z_n$ on $\C^n\otimes M$ with respect to the appropriate maximal ideal.  Let $\mathsf{HC}$ be the 2-category such that:
\begin{itemize}
    \item objects are dominant integral weights $\chi=(\chi_1\leq \cdots \leq \chi_n)$,
    \item 1-morphisms $\chi\to \chi'$ are sums of pro-Harish-Chandra bimodules where the maximal ideal $\mathfrak{m}_{\chi'}\subset Z_n$ acts topologically nilpotently on the left and $\mathfrak{m}_\chi\subset Z_n$ acts topologically nilpotently on the right.  
    \item 2-morphisms are homomorphisms of bimodules. 
\end{itemize}
By \cite[Th. A]{BSW3}, we have a 2-functor $\tU^*\to \mathsf{HC}$ sending $\EuScript{E}_i\mapsto \mathscr{E}_i$ and similarly with $\mathscr{F}$.  Note that Soergel sketches the construction of a functor $\mathsf{Flag}\to \mathsf{HC}$ which can be used to relate this to Khovanov and Lauda's action in the final paragraph of \cite{Soe92}.

\subsection{Presentation of the category \texorpdfstring{$\mathcal{GT}_{\chi}$}{GT}}

A choice of integral $\la\in \MaxSpec_{\Z,\chi}(\Gamma)$ can be used to give a total  preorder $\prec$ on the set $\Omega$ as in previous sections for the dimension vector $v_i=i$ for $\rankp=n$.  We set $(i,k) \preceq (j,\ell)$ if and only if  $\la_{i,k}< \la_{j,\ell}$ or $\la_{i,k}= \la_{j,\ell}$ and $i\geq j$.  Note that the resulting preorder satisfies \eqref{eq:order-1}  by assumption and  \eqref{eq:order-2} since if $(i,k) \approx (j,\ell)$, then we must have $\la_{i,k}=\la_{j,\ell}$ and $i=j$.
\begin{definition}
We let $\Bi(\la)$ define the corresponding word in $[1,m+1]$ and $e'(\la)$ be the corresponding idempotent in $\bT^\chi$.  
\end{definition}

Note that \cite[Prop. 5.4]{WebGT} shows that any maximal ideals giving the same preorder on $\Omega$ give isomorphic functors on $\mathcal{GT}_\chi$.
If we change the order by swapping pairs $(i,k) $ and $(j,\ell)$ this is called a {\bf neutral swap}, and more generally neutral swaps don't change the functor given by two maximal ideals.  For example, two Gelfand-Tsetlin patterns with the same $\chi$ don't necessarily give the same order, but they differ by neutral swaps, and indeed they give isomorphic functors.  

\begin{example}
The action of $\Gamma$ on the trivial representation gives the weight $\la_{i,k}=k$.  Thus, the resulting word is 
\[(n,n-1,\dots, 2,1,n, n-1, \dots, 3,2, n,n-1,\dots, 3, \dots n,n-1,n).\]
As mentioned before, every Gelfand-Tsetlin pattern gives an ordering that differs from this one by neutral swaps.

On the other hand, a weight like $\la_3=(1,2,3), \la_2=(4,4), \la_1=(1)$ gives $(3,1,3,3,2^{(2)})$, whereas if $\la_2=(4,5)$, then we have $(3,1,3,3,2,2).$ This is an example of a maximal ideal of the Gelfand-Tsetlin subalgebra that appears in no finite-dimensional module.
\end{example}

Using the techniques of \cite{WebGT, WebSD, KTWWYO}, we can give an algebraic presentation of the category $\mathcal{GT}_{\chi}$.   This is accomplished by presenting the algebra of natural transformations of the functors $\Wei_\la.$
Let $\widehat{\bT}_\chi$ be the completion of the algebra $\bT_{\chi}$ with respect to its grading.  See \cite[\S 2]{WebBKnote} for more discussion of the induced topology (in particular, the continuity of multiplication in it).  
\begin{theorem}\label{thm:GT-iso}
We have an isomorphism compatible with multiplication
\[e'(\la') \widehat{\bT}_\chi e'(\la)\cong \Hom(\Wei_\la,\Wei_{\la'}).\]
\end{theorem}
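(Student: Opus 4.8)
The plan is to reduce the statement to the claim that $\mathcal{GT}_\chi$ is equivalent to the category of discrete modules over $\widehat{\bT}_\chi$, with the weight functor $\Wei_\la$ matched to $\Hom_{\widehat{\bT}_\chi}(\widehat{\bT}_\chi e'(\la),-)$; granting this, the assertion of the theorem becomes the tautology $\Hom\big(\Hom(\widehat{\bT}_\chi e'(\la),-),\Hom(\widehat{\bT}_\chi e'(\la'),-)\big)\cong e'(\la')\widehat{\bT}_\chi e'(\la)$. The first thing to set up is that $\bigoplus_\la\Wei_\la\colon\mathcal{GT}_\chi\to\Vect$ is faithful and exact, which is immediate from the decomposition $M=\bigoplus_\la\Wei_\la(M)$; hence $\mathcal{GT}_\chi$ is a full subcategory of discrete modules over the completed natural-transformation algebra $\mathcal{A}=\prod_{\la,\la'}\Hom(\Wei_\la,\Wei_{\la'})$, and everything comes down to producing a topological isomorphism $\widehat{\bT}_\chi\cong\mathcal{A}$ compatible with the idempotents $e'(\la)$, together with the observation that a discrete $\mathcal{A}$-module is automatically Gelfand--Tsetlin (the subalgebra of $\mathcal{A}$ spanned by red and black dots acts topologically locally finitely, with $\Wei_\la$ the generalized $\la$-eigenspace).

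To build the map $\widehat{\bT}_\chi\to\mathcal{A}$ I would, following \cite{WebGT,WebSD}, send generators to the evident natural transformations: a dot on the $k$th black strand of colour $i$ acts by the Gelfand--Tsetlin variable $\la_{i,k}\in\Gamma$ (which on $\Wei_\la$ is the scalar $\la_{i,k}$ plus a locally nilpotent correction), a dot on a red strand acts similarly through $Z_n$ and $\mathfrak{m}_\chi$, and a crossing acts by a \emph{Gelfand--Tsetlin shift operator}: localizing $U=U(\mathfrak{gl}_n)$ over $\Gamma$ gives a $\Gamma$-bimodule decomposition $U=\bigoplus_z U_z$ along a shift lattice, and the pieces $U_z$ furnish natural transformations $\Wei_\la\Rightarrow\Wei_{\la+z}$ which, after multiplying by the polynomial factors dictated by Definition \ref{def:poly-rep}, realize the divided-difference and multiplication rules there (these operators are essentially those of \cite{FGRnew,RZ18,FGRZVerma}). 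One then checks the quiver-Hecke relations \eqref{first-QH}--\eqref{triple-dumb}; this is a direct computation with the commutation relations in $U$ and the structure constants of the $U_z$, of the same type as in \cite{WebGT}. The map is injective because the polynomial representation $\poly^\chi$ is faithful on $\bT^\chi$ and is realized as $\bigoplus_\la\Wei_\la$ of a suitable Gelfand--Tsetlin module, so no nonzero element of $\bT^\chi$ can act by zero on all of $\mathcal{GT}_\chi$; continuity of multiplication (\cite[\S2]{WebBKnote}) then promotes this to $\widehat{\bT}_\chi$.

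The substance of the proof is surjectivity, i.e.\ that there are no ``extra'' natural transformations. I would prove it by an order-by-order dimension count: the $\mathfrak{m}_\chi$- and $\mGT_\la$-adic filtration on $\mathcal{A}$ corresponds under the map above to the grading filtration on $\widehat{\bT}_\chi$, whose associated graded pieces $e'(\la')(\bT^\chi)_d\, e'(\la)$ are finite-dimensional with characters computable from the polynomial representation (or from Theorem \ref{thm:T-iso}). On the other side, $\Wei_\la$ is pro-representable by a pro-projective $P_\la$ carrying a filtration with (completed) standard subquotients $\st{\mu}$, so $\Hom(\Wei_\la,\Wei_{\la'})=\Hom(P_{\la'},P_\la)$ is bounded using the spaces $\Hom(P_{\la'},\st{\mu})$ together with the standard multiplicities of $P_\la$; here one invokes \cite[Prop.~5.4]{WebGT} (neutral swaps do not change the functor) and the classification of simple objects in $\mathcal{GT}_\chi$ from \cite{WebGT,KTWWYO} to identify these multiplicities with the corresponding dimensions on the $\bT^\chi$ side. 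Matching the two counts in each degree forces the injection $\widehat{\bT}_\chi\hookrightarrow\mathcal{A}$ to be an isomorphism, and then $e'(\la')\widehat{\bT}_\chi e'(\la)\cong e'(\la')\mathcal{A}e'(\la)=\Hom(\Wei_\la,\Wei_{\la'})$.

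The main obstacle I expect is exactly this exhaustion step. One must control the pro-projectives $P_\la$ precisely enough — their standard filtrations and the relevant $\Hom$-spaces — to rule out unaccounted natural transformations, and because the $P_\la$ are genuine pro-objects rather than finitely generated modules, one has to be careful that the passage to completions neither creates nor annihilates $\Hom$'s. This is precisely where the topology on $\widehat{\bT}_\chi$ recalled just before the statement is essential: it is what makes ``discrete module over $\widehat{\bT}_\chi$'' the correct notion capturing $\mathcal{GT}_\chi$, and what lets the degree-by-degree comparison be assembled into the stated isomorphism.
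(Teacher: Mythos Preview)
Your approach is genuinely different from the paper's, and it is worth contrasting the two.

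The paper does \emph{not} construct the map by hand or verify relations; instead it routes everything through geometry. The argument is three lines: first, $\Hom(\Wei_\la,\Wei_{\la'})$ is identified with the completed bimodule ${}_{\la'}\hat U_\la=\varprojlim U/(\mGT_{\la'}^N U+U\mGT_\la^N)$; second, a result from \cite{WebGT} (equation~(4.5) there) identifies ${}_{\la'}\hat U_\la$ with the Ext-space $\Ext(\perv_{\Bi(\la)},\perv_{\Bi(\la')})$ in the equivariant derived category of $V$; third, Theorem~\ref{thm:T-iso} of the present paper identifies that Ext-space with $e'(\la')\bT^\chi e'(\la)$, and completion finishes. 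So surjectivity is never argued by a dimension count---it comes for free from the geometric description, which is already known to be an isomorphism on both ends. Your proposal, by contrast, is a purely algebraic construction plus a dimension comparison, closer in spirit to the ``more direct algebraic proof'' the paper itself points to in \cite{SilverthorneW}.

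What your route buys is independence from the perverse-sheaf machinery; what it costs is exactly the surjectivity step, and here your sketch has a real gap. You appeal to a filtration of the pro-projective $P_\la$ by ``standard subquotients'' $\Delta(\mu)$ and to the classification of simples in $\mathcal{GT}_\chi$ from \cite{WebGT,KTWWYO} to match multiplicities. But $\mathcal{GT}_\chi$ is not \emph{a priori} a highest weight category, and no standard objects have been defined for it in this paper; more seriously, the classification of simples in \cite{KTWWYO,WebGT} is itself obtained \emph{via} (a variant of) the isomorphism you are trying to prove, so invoking it here is circular. A non-circular surjectivity argument needs either an independent upper bound on $\dim {}_{\la'}\hat U_\la/(\text{filtration level }N)$---for instance via an explicit PBW-type basis for the Harish-Chandra bimodule, which is what the arguments in \cite{FOD,mazorchukOGZ} ultimately provide---or else the geometric input the paper uses. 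If you want to repair the algebraic route, the honest replacement for your ``standard filtration'' step is to show directly that the OGZ/Mazorchuk-style operators $X^\pm_i$ together with $\Gamma$ generate ${}_{\la'}\hat U_\la$ topologically, and then check that their images already span the target; this is essentially what \cite{SilverthorneW} does.
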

This is extremely closely related to \cite[Thm. 5.2]{KTWWYO}, but slightly different, since that result is for Gelfand-Tsetlin modules with honest central character, which corresponds in $\widehat{\bT}_\chi $ to setting red dots to 0.  Under this isomorphism, the red dots give the nilpotent parts of certain (complicated) elements of the center of $U(\mathfrak{gl}_n).$ In \cite[Lem. 3.3 \& Prop. 3.4]{SilverthorneW}, we give a more direct algebraic proof of this theorem.
\begin{proof}
This follows from \cite[Thm. 4.4]{WebGT}.  The space $\Hom(\Wei_\la,\Wei_{\la'})$ is exactly the bimodule ${}_{\la'}\hat{U}_{\la}=\varprojlim U/(\mGT_{\la'}^NU+U\mGT^N_{\la})$, and by  \cite[(4.5)]{WebGT}, this is exactly the completion with respect to grading of $\Ext(\perv_{\Bi(\la)},\perv_{\Bi(\la')}).$  On the other hand, by Theorem \ref{thm:T-iso}, we have an isomorphism $\Ext(\perv_{\Bi(\la)},\perv_{\Bi(\la')})\cong e'(\la') {\bT}_\chi e'(\la).$  After applying completion, this induces an isomorphism  ${}_{\la'}\hat{U}_{\la}\cong e'(\la') \widehat{\bT}_\chi e'(\la)$.  Both of these isomorphisms are compatible with multiplication, so this completes the proof.   
\end{proof}

\begin{definition}\label{def:wgmod}
Let $\bT^\chi\wgmod$ be the category of finite-dimensional $\bT^\chi$-modules $M$ which are weakly gradable, that is, $M$ has a finite filtration with gradable subquotients.  

These are precisely the modules that extend to finite-dimensional modules over the completion $\widehat{\bT}_\chi$ with the discrete topology.  
\end{definition}

\begin{corollary}\label{cor:GT-equiv}
We have an equivalence of categories $\boldsymbol{\Theta}\colon \mathcal{GT}_\chi\cong {\bT}_\chi\wgmod$ such that $e(\la)\boldsymbol{\Theta}(M)=\Wei_{\la} (M)$ for all integral maximal ideals $\la\in \MaxSpec_{\Z,\chi} (\Gamma)$.  
\end{corollary}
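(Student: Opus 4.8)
The plan is to deduce Corollary~\ref{cor:GT-equiv} from Theorem~\ref{thm:GT-iso} together with the description of $\mathcal{GT}_\chi$ through the functors $\Wei_\la$ coming from \cite{WebGT} and the bookkeeping of Definition~\ref{def:wgmod}. For $M\in\mathcal{GT}_\chi$ one has the generalized weight space decomposition $M=\bigoplus_{\la\in\MaxSpec_{\Z,\chi}(\Gamma)}\Wei_\la(M)$. Since morphisms in $\mathcal{GT}_\chi$ are $U(\mathfrak{gl}_n)$-linear and hence commute with $\Gamma$, each $\Wei_\la$ is a functor on $\mathcal{GT}_\chi$, and for $u\in U(\mathfrak{gl}_n)$ the components $\Wei_\la(M)\to\Wei_{\la'}(M)$ of the action of $u$ are natural in $M$. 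I would therefore set $\boldsymbol{\Theta}(M)=\bigoplus_\la\Wei_\la(M)$, regarded as a module over the topological algebra $\mathcal{A}=\bigoplus_{\la,\la'}\Hom(\Wei_\la,\Wei_{\la'})$ acting by evaluation of natural transformations. Theorem~\ref{thm:GT-iso} identifies $\mathcal{A}$ with $\widehat{\bT}_\chi$ as a topological algebra, compatibly with the idempotents, which forces $e(\la)\boldsymbol{\Theta}(M)=\Wei_\la(M)$ directly from the construction (here $e(\la)$ is the divided-power idempotent $e'(\la)$ attached to the word $\Bi(\la)$).

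The next step is to check that $\boldsymbol{\Theta}$ is an equivalence. For this I would use the fact, established in \cite{WebGT} and underlying the proof of Theorem~\ref{thm:GT-iso}, that the $\Wei_\la$ are pro-corepresentable, $\Wei_\la(M)\cong\varinjlim_N\Hom_{U(\mathfrak{gl}_n)}(U(\mathfrak{gl}_n)/U(\mathfrak{gl}_n)\mGT_\la^N, M)$, and that the associated pro-projectives jointly generate $\mathcal{GT}_\chi$. A standard Morita-theoretic argument then yields an equivalence between $\mathcal{GT}_\chi$ and the category of $\mathcal{A}$-modules on which every element is annihilated by an open two-sided ideal, with quasi-inverse given by restriction along the canonical map $U(\mathfrak{gl}_n)\to\widehat{\bT}_\chi\cong\mathcal{A}$. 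It then remains to match these ``discrete'' $\widehat{\bT}_\chi$-modules with $\bT^\chi\wgmod$: by Definition~\ref{def:wgmod} these are exactly the weakly gradable finite-dimensional $\bT^\chi$-modules, once one uses the finiteness of Gelfand-Tsetlin weight multiplicities (Ovsienko) to see that each $\Wei_\la(M)$ is finite-dimensional, together with the fact that the grading filtration on $\widehat{\bT}_\chi$ is cofinal with the $\mGT$-adic filtrations occurring in Theorem~\ref{thm:GT-iso}. This is the analogue, with red dots switched on, of \cite[Thm.~5.2]{KTWWYO}, where they are set to zero, and combined with Theorem~\ref{thm:generator} it is what yields the derived equivalences of Theorem~\ref{thm:main}.

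The step I expect to be the main obstacle is precisely this pro-representability and generation: the functors $\Wei_\la$ are not corepresented by honest objects of $\mathcal{GT}_\chi$ but only by pro-objects, which is exactly what forces passing to the completion $\widehat{\bT}_\chi$ and imposing the ``weakly gradable/discrete'' finiteness, and one must genuinely verify that these pro-objects generate the category. A secondary point needing care is the quasi-inverse: one must check that the $U(\mathfrak{gl}_n)$-module reconstructed from a discrete $\widehat{\bT}_\chi$-module is finitely generated, is supported on integral maximal ideals of $\Gamma$, and has $\mathfrak{m}_\chi$ acting nilpotently --- all of which can be read off from the decomposition $\sum_\la e'(\la)$ of the identity of $\bT^\chi$ (one should observe in passing that every $\chi$-parabolic preorder on $\Omega$ is of the form $\Bi(\la)$ for some integral $\la$, so that this sum does exhaust the relevant idempotents) and from the grading. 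Everything else is the routine dictionary between weight space decompositions on the $\mathfrak{gl}_n$-side and idempotent decompositions on the diagrammatic side.
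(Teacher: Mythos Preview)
Your overall strategy---package the weight functors $\Wei_\la$ into a functor to modules over $\mathcal{A}=\bigoplus_{\la,\la'}\Hom(\Wei_\la,\Wei_{\la'})$, invoke Theorem~\ref{thm:GT-iso}, and appeal to the pro-representability/completeness machinery of \cite{WebGT}---is exactly the paper's approach. The paper phrases the first step as choosing a finite set $\mathsf{S}\subset\MaxSpec_{\Z,\chi}(\Gamma)$ hitting each distinct idempotent once, setting $\bar e=\sum_{\la\in\mathsf{S}}e'(\la)$, and citing \cite[Def.~2.23--2.24]{WebGT} to obtain $\mathcal{GT}_\chi\cong \bar e\,\widehat{\bT}_\chi\,\bar e\wgmod$.

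However, there is a genuine gap at the point you flag as ``in passing.'' Your claim that every $\chi$-parabolic preorder on $\Omega$ is of the form $\Bi(\la)$ for some integral $\la\in\MaxSpec_{\Z,\chi}(\Gamma)$ is \emph{false}, so the idempotents $e'(\la)$ do \emph{not} in general sum to the identity of $\bT^\chi$; equivalently, $\mathcal{A}$ is only $\bar e\,\widehat{\bT}_\chi\,\bar e$, not $\widehat{\bT}_\chi$. For a concrete counterexample take $n=\rankp=3$, $\chi=(1,2,3)$, and the total order
\[
(3,1)\prec(1,1)\prec(2,1)\prec(3,2)\prec(2,2)\prec(3,3),
\]
which satisfies \eqref{eq:order-1}--\eqref{eq:order-2} and is (vacuously) $\chi$-parabolic. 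If this came from some $\la$ then $(3,1)\prec(1,1)$ forces $\la_{1,1}\geq 1$, while $(1,1)\prec(2,1)$ forces $\la_{1,1}<\la_{2,1}$, and $(2,1)\prec(3,2)$ forces $\la_{2,1}\leq 1$; together these are inconsistent over $\Z$. Hence $e(3,1,2,3,2,3)$ is a summand of $e_\chi$ not of the form $e'(\la)$.

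The paper confronts this directly: after obtaining $\mathcal{GT}_\chi\cong \bar e\,\widehat{\bT}_\chi\,\bar e\wgmod$, it invokes \cite[Prop.~5.12]{WebGT} to show that no simple $\widehat{\bT}_\chi$-module is killed by $\bar e$, so $\bar e$ induces a Morita equivalence $\bar e\,\widehat{\bT}_\chi\,\bar e\sim\widehat{\bT}_\chi$. (It also remarks that $\bar e=1$ does hold in the special case $\chi_i\ll\chi_{i+1}$ whenever $\chi_i\neq\chi_{i+1}$, which is the regime where your argument would go through as written.) Your proposed quasi-inverse and the matching with $\bT^\chi\wgmod$ only become correct once this Morita step is supplied; without it you land in the wrong algebra.
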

\begin{proof}
As $\la$ runs over $\MaxSpec_{\Z,\chi} (\Gamma)$, we only get finitely many different idempotents as $e(\la)$;  if we let $\mathsf{S}$ be a set of maximal ideals that contains one representative of each idempotent, then this is {\bf complete} in the sense of \cite[Def. 2.24]{WebGT}.  Let $\bar{e}$ be the sum of these idempotents.  Then \cite[Def. 2.23]{WebGT} shows that the functor $\oplus_{\la\in \mathsf{S}}\Wei_{\la}$ is an equivalence of categories $\mathcal{GT}_\chi\cong \bar{e}\widehat{\bT}_\chi\bar{e}\wgmod$ with the desired properties.  As argued in \cite[Prop. 5.12]{WebGT}, no simple object in $\widehat{\bT}_\chi\mmod$ is killed by $\bar{e}$, so $\bar{e}$ defines a Morita equivalence giving the desired result.  

Note that if $\chi_i \ll \chi_{i+1}$, then $\bar{e}$ is the identity in $\bT^\chi$ and this last step is not needed.  The general case can be deduced from this one using translation functors, but we appeal to \cite[Prop. 5.12]{WebGT} since we have not yet discussed compatibility with translation functors.
\end{proof}
Together with Theorem \ref{thm:T-iso}, 
this establishes Theorem \ref{thm:main}. 

Let us record how this equivalence matches  various properties of interest for Gelfand-Tsetlin modules:
\begin{lemma}
	A Gelfand-Tsetlin module $M$ satisfies:
	\begin{enumerate}
		\item $U(\mathfrak{n})\subset U(\mathfrak{gl}_n)$ acts locally finitely if and only if $\Theta(M)$ factors through $\vT^{\chi}_\nu$.
		\item The Gelfand-Tsetlin subalgebra acts semi-simply if and only if all dots, red and black, act trivially.
		\item The Cartan $\mathfrak{h}$ acts semi-simply if and only if the sum $h_{i,1}$ of all dots on strands with label $i$ acts trivially, for all $i$.
		\item The center $Z_n$ acts semi-simply if and only if all positive degree homogeneous polynomials in the red dots act trivially.
	\end{enumerate}
\end{lemma}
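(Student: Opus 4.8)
All four statements are read off after transporting the property along the equivalence $\boldsymbol{\Theta}$ of Corollary~\ref{cor:GT-equiv}, which satisfies $e'(\la)\boldsymbol{\Theta}(M)=\Wei_\la(M)$, and then locating the relevant part of $U(\mathfrak{gl}_n)$ inside $\widehat{\bT}_\chi$ via Theorem~\ref{thm:GT-iso}. The key input is the dictionary implicit in Theorem~\ref{thm:GT-iso} and made explicit in \cite{SilverthorneW,WebGT}: the Gelfand--Tsetlin subalgebra $\Gamma$ maps into $\widehat{\bT}_\chi$ so that, after completing at a point $\la$, its image is the completed subalgebra generated by the dots on the strands of $\Bi(\la)$, a generator of $Z_k$ going to an elementary symmetric function of the dots on the strands labelled $k$. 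Because completing the symmetrisation map $\Spec\K[Y]\to\Spec\K[Y]^{S}$ at $\la$ identifies the two completions (the local monodromy is the stabiliser of $\la_k$, exactly the group producing the divided-power idempotent in $e'(\la)$), the individual dots also lie in the image of the completed $\Gamma$; in particular $\Gamma$ maps onto the whole dot subalgebra. The honest-central-character versions of $(2)$--$(4)$ are essentially \cite[\S 5]{KTWWYO}; the new content is that the red dots now survive, carrying the deformation.

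\textbf{Parts $(2)$--$(4)$.} $(2)$: $\Gamma$ acts semisimply on $M$ exactly when $\mGT_\la$ annihilates $\Wei_\la(M)$ for all $\la$; as its image is the ideal generated by all the dots, red and black, this holds iff every dot acts by $0$ on $\boldsymbol{\Theta}(M)$. $(3)$: writing $\tr_i=\sum_{k\le i}E_{kk}\in Z_i\subset\Gamma$, one has $E_{ii}=\tr_i-\tr_{i-1}$, so $\mathfrak h\subset\Gamma$; hence each $E_{ii}$ preserves every $\Wei_\la(M)$ and acts there with a single generalised eigenvalue, and under Theorem~\ref{thm:GT-iso} its nilpotent part is $h_{i,1}-h_{i-1,1}$ (the difference of the sums of dots on strands labelled $i$ and on strands labelled $i-1$, with $h_{0,1}:=0$, since $\tr_i$ maps to $h_{i,1}$ plus a scalar). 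So $\mathfrak h$ acts semisimply iff every $E_{ii}$ does iff $h_{i,1}-h_{i-1,1}$ acts by $0$ for all $i$, and telescoping from $i=1$ this is equivalent to every $h_{i,1}$ acting by $0$. $(4)$: $Z_n$ acts semisimply iff $\mathfrak{m}_\chi$ annihilates every $\Wei_\la(M)$; since $Z_n=Z(U(\mathfrak{gl}_n))$ maps to the red dots — the $S_\chi$-symmetric functions $H^*(BP_\chi)$ of the $\mathfrak{gl}_n$-parameters, by Theorem~\ref{thm:T-iso} — and completing this map at $\chi$ gives an isomorphism, $\mathfrak{m}_\chi$ generates precisely the ideal of positive-degree homogeneous polynomials in the red dots.

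\textbf{Part $(1)$.} Local finiteness of $U(\mathfrak n)$ is a Serre condition on $\mathcal{GT}_\chi$: it is closed under subquotients, and given $0\to A\to B\to C\to 0$ with $A,C$ locally finite, a cyclic $U(\mathfrak n)$-submodule $U(\mathfrak n)b\subseteq B$ meets $A$ in a finitely generated (Noetherianity of $U(\mathfrak n)$) and hence finite-dimensional (being inside the locally finite $A$) submodule, with finite-dimensional image in $C$, so $U(\mathfrak n)b$ is finite-dimensional. Factoring through $\vT^\chi_\nu$ — being killed by all violating idempotents — is likewise a Serre condition, so since $\boldsymbol{\Theta}$ matches $e(\la)$-spaces with weight spaces it suffices to match the two classes of simple objects. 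A simple $M\in\mathcal{GT}_\chi$ is $U(\mathfrak n)$-locally finite iff it contains a highest weight vector — the $\mathfrak n$-invariants are nonzero because $\mathfrak n$ acts nilpotently on the finite-dimensional $U(\mathfrak n)$-module generated by any vector, and $\mathfrak h\subset\Gamma$ acts locally finitely, so some such invariant is an $\mathfrak h$-eigenvector — i.e.\ iff $M\cong L(w\cdot(\chi-\rho))$ lies in the block $\mathcal{O}_\chi$ of category $\mathcal{O}$. By the classification of simple Gelfand--Tsetlin modules \cite{WebGT} together with Theorem~\ref{thm:all-ICs}, these are exactly the $\mathbf{IC}$-sheaves restricting nontrivially to $GL_n/P_\nu$, equivalently (as in \S\ref{sec:restr-flag-vari}) those indexed by flavored multi-segments all of whose segments contain $\rankp$, equivalently the non-violating idempotents; and on the other side these are precisely the simple $\vT^\chi_\nu$-modules.

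\textbf{Main obstacle.} The conceptual part is light; the real work is the dictionary of Theorem~\ref{thm:GT-iso} — pinning down which (in general complicated, Capelli-type) generators of the $Z_k$ and which central elements of $U(\mathfrak{gl}_n)$ go to which symmetric functions of dots, and especially the point that the completed $\Gamma$ recovers the individual dots, where the étaleness of symmetrisation and the role of the divided-power idempotents are essential — together with, for $(1)$, the bookkeeping that identifies the category-$\mathcal{O}$ simples among all simple Gelfand--Tsetlin modules with the multi-segments supported over the flag variety.
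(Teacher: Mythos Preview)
The paper states this lemma without proof, moving directly to the paragraph beginning ``In particular, the modules over $\bT^\chi_\nu$\ldots''.  So there is nothing to compare your argument against; you are supplying a proof where the paper gives none.

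Your arguments for $(2)$--$(4)$ are sound and are exactly the kind of thing the author has in mind: they are immediate once one accepts the dictionary of Theorem~\ref{thm:GT-iso} (explicated in \cite{SilverthorneW,WebGT}) sending the completed $\Gamma$ onto the completed dot subalgebra, with $Z_n$ landing in the symmetric functions of the red dots.  Your telescoping argument for $(3)$ via $\tr_i\in Z_i$ is the natural one.

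For $(1)$ your strategy is correct, but the final identification deserves one more sentence.  You reduce to simples and show that a simple $U(\mathfrak n)$-locally finite Gelfand--Tsetlin module is a simple highest weight module $L(w\cdot(\chi-\rho))$.  You then want to match these with the simple $\bT^\chi$-modules killed by the violating idempotents.  Your route through \S\ref{sec:restr-flag-vari} and the classification is valid, but note that the paragraph immediately following the lemma in the paper (identifying $\cO'_\chi$ with modules over the quotient by red dots and violating strands) is stated as a \emph{consequence} of the lemma, so you should not circularly invoke it.  What you actually need is that under $\boldsymbol{\Theta}$ the simple $L(w\cdot(\chi-\rho))$ has $\Wei_\la L\neq 0$ only for $\la$ with $\Bi(\la)$ non-violating; this follows because the Gelfand--Tsetlin weights appearing in a highest weight module with integral highest weight all satisfy the interlacing-type conditions forcing every row of the pattern to have entries bounded by those of $\chi$, which on the diagrammatic side says the leftmost strand is red.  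Alternatively, and closer to what you wrote, the count of such simples ($|W/W_\chi|$) matches the count of $P_\chi$-orbits on the open locus $GL_n/B\subset V/G_0$, and the bijection is pinned down by the support condition from \S\ref{sec:restr-flag-vari}.
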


In particular, the modules over $\bT^\chi_\nu$ which factor through the quotient by all red dots and by violating strands is the category $\cO'_\chi$ of modules locally finite under $U(\mathfrak{n})$ and semi-simple for the action of the center with central character $\chi$; such a module is automatically a generalized weight module, but not necessarily an honest weight module, and so not necessarily in category $\cO$.

On the other hand, the modules that factor through the quotient of $\vT^{\chi}_\nu$ by the symmetric polynomials $h_{i,1}$ for all $i$, are those which lie in the usual category $\cO$.  Note that we showed in \cite[Cor. 9.10]{Webmerged} that $\bvT^{\nu}_\chi\mmod$ was equivalent to $\cO_\chi$ with the translation functors matched with the induction-restriction action.  Thus, this is related to our result using the Morita equivalence of Corollary \ref{cor:chi-nu} between  $\bvT^{\nu}_\chi$ and this quotient of $\vT^\chi_\nu$.  

\subsection{Translation functors}
Now, let us consider how this perspective interacts with translation functors.  This requires studying the bimodule $\C^n\otimes U$ 
\begin{lemma}
As a $U\operatorname{-}U$-bimodule, $\C^n\otimes U$ is isomorphic to the sub-bimodule $U'$ in $U(\mathfrak{gl}_{n+1})$ generated by $e_n$.
\end{lemma}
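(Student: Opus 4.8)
The plan is to exhibit an explicit bimodule map and check it is an isomorphism. Identify $\mathfrak{gl}_{n+1}$ with $(n+1)\times(n+1)$ matrices, $\mathfrak{gl}_n$ with the top-left block, and write $E_{ab}$ for the matrix units, so that $e_n=E_{n,n+1}$; let $v_1,\dots,v_n$ be the standard basis of $\C^n$. I would define
\[\phi\colon \C^n\otimes U \longrightarrow U(\mathfrak{gl}_{n+1}),\qquad v_i\otimes u\longmapsto E_{i,n+1}\,u,\]
extended $\C$-linearly, and verify it is a $U$-$U$-bimodule homomorphism. Right-linearity is immediate from the definition. For left-linearity, since $U$ is generated by $\mathfrak{gl}_n$ it suffices to check $\phi\bigl(E_{jk}\cdot(v_i\otimes u)\bigr)=E_{jk}\cdot\phi(v_i\otimes u)$ for $1\le j,k\le n$: expanding the left side via the coproduct action $E_{jk}\cdot(v_i\otimes u)=\delta_{ki}\,v_j\otimes u+v_i\otimes E_{jk}u$ and the right side via the commutator $[E_{jk},E_{i,n+1}]=\delta_{ki}E_{j,n+1}$ (the potential term $\delta_{n+1,j}E_{i,k}$ vanishes since $j\le n$), both sides equal $\delta_{ki}E_{j,n+1}u+E_{i,n+1}E_{jk}u$.

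Next I would identify the image with $U'$. Since $\phi(v_n\otimes 1)=E_{n,n+1}=e_n$ lies in $\operatorname{im}\phi$, and the image of a bimodule map is a sub-bimodule, we get $U'\subseteq\operatorname{im}\phi$. Conversely $\operatorname{im}\phi$ is spanned by the elements $E_{i,n+1}u$ with $1\le i\le n$ and $u\in U$; for $i=n$ this is $e_n u\in U'$, while for $i<n$ one has $E_{i,n+1}=[E_{in},e_n]=E_{in}e_n-e_nE_{in}$ with $E_{in}\in U(\mathfrak{gl}_n)$, so $E_{i,n+1}u\in U'$ as well. Hence $\operatorname{im}\phi=U'$, and $\phi$ is surjective onto $U'$.

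Finally, injectivity would follow from the PBW theorem: choose an ordered basis of $\mathfrak{gl}_{n+1}$ in which $E_{1,n+1},\dots,E_{n,n+1}$ come first, followed by a PBW-ordered basis of $\mathfrak{gl}_n$ and then the remaining basis vectors $E_{n+1,n+1},E_{n+1,1},\dots,E_{n+1,n}$. A $\C$-basis of $\C^n\otimes U$ is given by the $v_i\otimes m$ with $m$ running over PBW monomials in $\mathfrak{gl}_n$, and $\phi(v_i\otimes m)=E_{i,n+1}m$ is then, for the chosen order, an honest PBW basis monomial of $U(\mathfrak{gl}_{n+1})$; distinct pairs $(i,m)$ give distinct monomials, so these are linearly independent. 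Thus $\phi$ restricts to a bimodule isomorphism $\C^n\otimes U\cong U'$. I do not expect a genuine obstacle here: the only delicate points are pinning down conventions — that $\C^n$ is the defining representation and the left action is through the standard coproduct, which forces $v_n\otimes 1\mapsto E_{n,n+1}$ — and arranging the PBW order so that the $E_{i,n+1}m$ are genuinely basis monomials rather than elements one must re-expand.
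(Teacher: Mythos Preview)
Your proof is correct and follows essentially the same route as the paper: both identify the isomorphism by matching the subspace $\C^n\subset\C^n\otimes U$ with the span of $E_{1,n+1},\dots,E_{n,n+1}$ inside $U(\mathfrak{gl}_{n+1})$, and then invoke PBW to see that each side is freely generated by this copy of $\C^n$ as a right $U$-module. The only cosmetic difference is that the paper phrases the left-linearity check as adjoint equivariance of the matching $e_n\leftrightarrow v_n$, whereas you unwind that into the explicit commutator computation $[E_{jk},E_{i,n+1}]=\delta_{ki}E_{j,n+1}$; these are the same calculation.
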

\begin{proof}
Both these bimodules have an induced adjoint $\mathfrak{gl}_n$-module structure (which exists for any Hopf algebra). The subspace $\C^n\subset \C^n\otimes U$ is a submodule with this module structure.  Similarly, $e_n$ generates a copy of $\C^{n}$ inside the adjoint module over $\mathfrak{gl}_{n+1}$ restricted to $\mathfrak{gl}_n$.  Sending $e_n\mapsto (0,\dots, 0,1)$ induces a unique isomorphism $\psi$ between these subspaces.  

In any $U\operatorname{-}U$ bimodule, a generating subspace for the bimodule action which is closed under the adjoint action is already generating for the right action.  The module $\C^n\otimes U$ is freely generated by $\C^n$ as a right $U$-module by definition, and the same is true for $U'$ by the PBW theorem.  Thus, $\psi$ canonically extends to a right-module homomorphism, which is also a left-module homomorphism since $\psi$ is adjoint equivariant.  
\end{proof}
Fix $\chi\in \MaxSpec(Z_n)$, and assume that $\chi^{+i}$ exists.  Consider  \[\la\in \MaxSpec_{\Z,\chi}(\Gamma)\qquad \la'\in \MaxSpec_{\Z,\chi^{+i}}(\Gamma).\] 
\begin{theorem} We have an isomorphism compatible with bimodule structure:
\[e'(\la') \mathbb{E}_i e'(\la)=\varprojlim U'/(\mGT_{\la'}^NU'+U'\mGT_{\la}^N)\]
\end{theorem}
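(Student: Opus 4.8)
\noindent\emph{Proof strategy.} The plan is to treat this as the translation-functor analogue of Theorem~\ref{thm:GT-iso} and re-run that argument. First I would rewrite both sides as spaces of natural transformations of functors on $\mathcal{GT}_\chi$. By the preceding lemma the translation functor $E=\C^n\otimes(-)$ is $U'\otimes_U(-)$, with $U'$ free as a right $U$-module and pro-Harish-Chandra; so, exactly as in the proof of Theorem~\ref{thm:GT-iso} (pro-representability of the weight functors $\Wei_\la$, exactness of $E$, and completing the left and right $U$-actions on $U'$ at $\mGT_{\la'}$ and $\mGT_\la$), the right-hand side $\varprojlim_N U'/(\mGT_{\la'}^N U'+U'\mGT_\la^N)$ is canonically $\Hom(\Wei_\la,\Wei_{\la'}\circ E)$, and since $\la'$ lies over the central character $\chi^{+i}$ this is $\Hom(\Wei_\la,\Wei_{\la'}\circ E_i)$. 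On the other side, under the equivalence $\boldsymbol{\Theta}$ of Corollary~\ref{cor:GT-equiv} the ladder bimodule $\mathbb{E}_i$ induces an exact functor $T_i\colon\mathcal{GT}_\chi\to\mathcal{GT}_{\chi'}$, and the same matrix-coefficient computation (with the same completion as in Theorem~\ref{thm:GT-iso}, suppressed from the notation, and the $S_{\chi,\chi'}$-invariants handled as there) identifies $e'(\la')\mathbb{E}_i e'(\la)$ with $\Hom(\Wei_\la,\Wei_{\la'}\circ T_i)$. Hence the theorem is equivalent to the statement $T_i\cong E_i$ as functors $\mathcal{GT}_\chi\to\mathcal{GT}_{\chi'}$.

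To prove $T_i\cong E_i$ I would transport both functors, via the equivalences of Theorems~\ref{thm:main} and~\ref{thm:generator}, to convolution functors on the equivariant derived categories of the quiver space. By Theorem~\ref{thm:E-iso} together with the compatibility of the bimodules $B_{(-)}$ with convolution (Lemma~\ref{lem:BFG}), $T_i$ becomes convolution with the (shifted) constant sheaf $\C_{\chi',\chi}$ on the closed $P_{\chi'}\times P_\chi$-orbit $P_{\chi',\chi}=P_{\chi'}P_\chi$. It then remains to show that $E_i$ becomes the same convolution functor; this is the Gelfand-Tsetlin incarnation of the classical geometrization of translation functors. To get it I would again use the preceding lemma to realize $\C^n\otimes U\cong U'$ as the $\mathfrak{gl}_n$-$\mathfrak{gl}_n$-sub-bimodule of $U(\mathfrak{gl}_{n+1})$ generated by $e_n$, and combine this with the quiver/localization description of the completed blocks from \cite{WebGT}: tensoring by $\C^n$ should be exactly the ``change of parabolic level'' push--pull functor that forgets, respectively reinstates, the extra one-dimensional flag step on passing between $\chi$ and $\chi^{+i}$, and the kernel of that functor is $\C_{\chi',\chi}$. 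Matching the two convolution functors gives $T_i\cong E_i$, and hence the claimed isomorphism of bimodules.

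The routine parts are those copied almost verbatim from Theorem~\ref{thm:GT-iso}: the bookkeeping of completions and of the $S_{\chi,\chi'}$-invariants, and the identification of $T_i$ with convolution, which is immediate from Theorem~\ref{thm:E-iso}. The step I expect to be the main obstacle is the geometric realization of the translation functor $E_i$ as convolution with $\C_{\chi',\chi}$: one has to carry the Harish-Chandra bimodule $\C^n\otimes U$ through the Koszul/localization dictionary of \cite{WebGT} and verify that the resulting kernel is the (shifted) constant sheaf on the closed orbit rather than some object of larger support. A convenient consistency check is that this functor must induce the expected Chevalley generator on the relevant $\mathfrak{sl}_\infty$-weight spaces at the level of Grothendieck groups; note also that, a posteriori, this argument reproves the equivalence of the categorical actions $(1)$ and $(2)$ of Theorem~\ref{thm:action}.
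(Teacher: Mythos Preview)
Your strategy is conceptually appealing but contains a genuine circularity. You correctly reduce the statement to proving $T_i\cong E_i$ as functors, and you correctly identify $T_i$ with convolution by $\C_{\chi',\chi}$ via Theorem~\ref{thm:E-iso}. The gap is in the last step: showing that the translation functor $E_i$ on Gelfand--Tsetlin modules also transports to convolution by $\C_{\chi',\chi}$. This is not established in \cite{WebGT}, which only provides the equivalence of categories underlying Theorem~\ref{thm:GT-iso}, not any compatibility with translation functors. In fact, the identification $T_i\cong E_i$ is exactly the content of the Corollary immediately following this theorem, and the paper deduces it \emph{from} the present theorem (together with Theorem~\ref{thm:E-iso}), not the other way around. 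So your argument assumes what it is trying to prove. The Grothendieck-group consistency check you mention is not a proof; and the heuristic that tensoring by $\C^n$ ``should be'' the change-of-parabolic push--pull does not come for free from the embedding $U'\subset U(\mathfrak{gl}_{n+1})$: the Gelfand--Tsetlin maximal ideals $\mGT_\la,\mGT_{\la'}$ live in $\Gamma_n$, not $\Gamma_{n+1}$, so one cannot directly invoke the $\mathfrak{gl}_{n+1}$ version of \cite{WebGT} without choosing (and then eliminating dependence on) an auxiliary $\la_{n+1}$.

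The paper takes a completely different, computational route. It works in the polynomial (orthogonal Gelfand--Zetlin) representation of $U(\mathfrak{gl}_{n+1})$: writing the generator $X_n^+$ explicitly as a sum of shift operators $\varphi_{n,j}$, it projects to the summand $P'_i$ and observes that, for $k\notin I_{i+1}$, the denominator factors $(x_{n,j}-x_{n,k})$ become units in the completion, so that the image of $X_n^+$ decomposes as a sum of operators $X_i^{(p,s)}$ indexed by a symmetric polynomial $p$ and a power $s$. Each such operator is then matched, by hand, with an explicit ladder diagram in $\mathbb{E}_i$ (the polynomial $p$ placed on the red dots at the bottom, with $s$ red dots ``sandwiched'' on the rung). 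The bimodule compatibility is verified by checking that both sides act identically on the respective polynomial representations, via Lemma~\ref{lem:E-rep}; injectivity follows from faithfulness of those representations, and surjectivity because every bare ladder is visibly in the image. This explicit matching is the actual content of the theorem and cannot be replaced by the abstract functor comparison you propose without an independent proof of the step you flagged as the main obstacle.
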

\begin{proof}
Note that $e_n$ commutes with $Z_k$ for $k<n$, embedded in $U(\mathfrak{gl}_{n+1})$ via the standard inclusion.  Thus, $e_n$ only has non-zero image if $\la_{i,k}=\la'_{i,k}$ for all $i<n$.  We therefore can describe the desired map by specifying the image of $e_n$ in this case.  

As in \cite{WebSD, KTWWYO}, we can read this off by carefully studying the polynomial representation of $U(\mathfrak{gl}_{n+1})$, induced by its realization as an orthogonal Gelfand-Zetlin algebra\footnote{Apologies for the inconsistent spelling of \foreignlanguage{russian}{Цетлин.  }We use a``Z'' here to match \cite{mazorchukOGZ}.}.  This was originally given in \cite{mazorchukOGZ}, though it might be easier to follow the notation in \cite[\S 5.1]{WebGT}.  Let $\Gamma_{(n+1)}$ be the Gelfand-Tsetlin subalgebra of $U(\mathfrak{gl}_{n+1})$, which we identify with the space of symmetric polynomials in the alphabets $\{x_{i,1},\dots, x_{i,i}\}$ for $i=1,\dots, n+1$.

 Let $\varphi_{i,j}$ be the translation on all polynomials in these alphabets
satisfying \[\varphi_{i,j}(x_{k,\ell})=(x_{k,\ell}+\delta_{ik}\delta_{j\ell})
  \varphi_{i,j}.\]
  We then have a representation of $\mathfrak{gl}_{n+1}$ where $E_i$ and $F_i$ act by the operators
\[X^\pm_i=\mp\sum_{j=1}^{i}\frac{\displaystyle\prod_{k=1}^{v_{i\pm 1}}
    (x_{i,j}-x_{i\pm 1,k})}{\displaystyle\prod_{k\neq j}
    (x_{i,j}-x_{i,k})}\varphi_{i,j}^{\pm 1}\]
    
Thus, $U'$ embeds into the bimodule of operators $\Gamma_{(n+1)}$ when this space is made into a module over $U(\mathfrak{gl}_{n})$ as the subspace generated by the operator 
\[X^+_n=-\sum_{j=1}^{n}\frac{\displaystyle\prod_{k=1}^{{n+ 1}}
    (x_{n,j}-x_{n+ 1,k})}{\displaystyle\prod_{k\neq j}
    (x_{n,j}-x_{n,k})}\varphi_{n,j}\]
    Consider $P'=\varprojlim U'/U'\mGT_{\la}^N$.  This is a left module over $U$, and clearly pro-Gelfand-Tsetlin, since for any element of $u\in U'$, the sub-bimodule $\Gamma u\Gamma$ lies in the left $\Gamma$ module generated by finitely many translations (this is a version of the Harish-Chandra property discussed in \cite{FOD}). This means that $\Gamma u\Gamma\otimes_{\Gamma}\Gamma/\mGT_{\la}^N$ is finite-dimensional so $U'/U'\mGT_{\la}^N$ is Gelfand-Tsetlin.  Thus $P'$ decomposes into pieces $P'=\oplus_{i\in \Z} P_i'$ on which $Z_n$ acts by $\chi^{+i}$ for $i$ ranging over the values of $\chi_g$; note that $P_i'=E_i(\varprojlim U/U\mGT_{\la}^N)$.

    Consider the image of $X^+_n$ in $P'_i$.  If we let $I_i=\{j\mid \la_{n,j}=i\}$, then we have that this image is given by 
    \[\sum_{j\in I_{i+1}}  \frac{\displaystyle\prod_{k=1}^{{n+ 1}}
    (x_{n,j}-x_{n+ 1,k})}{\displaystyle\prod_{k\in [1,n]\setminus \{j\}}
    (x_{n,j}-x_{n,k})} \varphi_{n,j}\]
Note that if $k\notin I_{i+1}$ then the corresponding factor in the denominator is invertible in the local ring $\varprojlim \Gamma_{n}/\Gamma_{n}\mGT_{\la'}^N$, using the geometric series
\[\frac{1}{x_{n,j}-x_{n,k}}=\frac{1}{\la_{n,j}'-\la'_{n,k}}+\frac{x_{n,k}-\la'_{n,k}-x_{n,j}+\la'_{n,j}}{(\la_{n,j}'-\la'_{n,k})^2}+\frac{(x_{n,k}-\la'_{n,k}-x_{n,j}+\la'_{n,j})^2}{(\la_{n,j}'-\la'_{n,k})^3}+\cdots\]
    Thus, we can write the image of $X^+_n$ as a sum of operators of the form \[X_{i}^{(p,s)}=-\sum_{j\in I_{i+1}}  \frac{\displaystyle\prod_{k=1}^{{n+ 1}}
    (x_{n,j}-x_{n+ 1,k})}{\displaystyle\prod_{p\in I_{i+1}\setminus \{j\}}
    (x_{n,j}-x_{n,p})} (x_{n,j}-i-1)^sp(x_{n,1},\dots, x_{n,n})\varphi_{n,j}\] where $p$ is a polynomial symmetric in the alphabets $I_i$.
    
    We can identify $p$ with a polynomial in the red dots at the bottom of the diagram, with the shifted variables $\{x_{n,j}-i\}_{j\in I_i}$ sent to the red dots on the corresponding strand with $i=\chi_j$.  
    We then send $X_{i}^{(p)}$ to the element of $\mathbb{E}_i$ where we place the polynomial $p$ applied to the red dots at the bottom of the diagram and  ``sandwich'' $s$ red dots on the middle strand of the ladder, with all black strands are straight vertical at positions determined by $\la$.  That is,
    \begin{equation}
        X_{i}^{(p,s)}\mapsto\,- \tikz[baseline, yscale=1.2]{\draw[wei] (-2,-1)--(-2,1);\draw[very thick] (-1.5,-1)--(-1.5,1);\draw[very thick] (-1,-1)--(-1,1);\draw[very thick] (0,-1)--(0,1);\draw[very thick] (.5,-1)--(.5,1);
    \draw[wei] (-.5,-.35) to[out=90,in=-90] node [red, midway, circle, fill=red, inner sep=3pt,label=above:{$s$}]{} (1,.85);\draw[wei] (1,-1)--(1,1); \draw[wei] (-.5,-1)--(-.5,1);\draw[very thick] (1.5,-1)--(1.5,1);\draw[wei] (2,-1)--(2,1); \node[red,draw=black, fill=white, inner xsep=60pt] at (0,-.6){$p$};} \label{eq:XPS}
    \end{equation}
We claim that this defines a map    
\[\varprojlim U'/(\mGT_{\la'}^NU'+U'\mGT_{\la}^N)\to e'(\la') \mathbb{E}_i e'(\la)\]

First, let us match the completion $\K[[Y_1,\dots,Y_N]]e'(\la)$ with the completed polynomial ring $\varprojlim \Gamma/\Gamma\mGT_\la^N$ by matching $Y_r$ with $x_{p,j}-\la_{p,j}$ if the $r$th strand from the left is the $j$th (from the left) with label $p$; let us simplify notation by writing $Y_{p,j}:=Y_r$ in this case.  Under this isomorphism, $X_i^{(p,s)}$ transforms to
\[\mathscr{X}_{i}^{(p,s)}=-\sum_{j\in I_{i+1}}  \frac{\displaystyle\prod_{k=1}^{{n+ 1}}
    (Y_{n,j}+\la_{n,j}-x_{n+ 1,k})}{\displaystyle\prod_{p\in I_{i+1}\setminus \{j\}}
    (Y_{n,j}-Y_{n,p})} (Y_{n,j})^sp(Y_{n,1}+\la_{n,1},\dots, Y_{n,n}+\la_{n,n})\varphi_{n,j}\]
This is exactly $C=\prod_{k=1}^{{n+ 1}}
    (Y_{n,j}+\la_{n,j}-x_{n+ 1,k})$ times the action in the polynomial representation of $\mathbb{E}_i$ given the diagram on the left-hand side of \eqref{eq:XPS} in Lemma \ref{lem:E-rep}.  Multiplication by $C$ commutes with the left and right actions on this bimodule, so this shows that the representations of bimodules match.
    
    This shows that \eqref{eq:XPS} defines a homomorphism, which must be injective by the faithfulness of both representations.  Obviously, every bare ladder is in the image of this map, so it must be surjective as well.
\end{proof}
\begin{corollary}
 The equivalence of Corollary \ref{cor:GT-equiv} intertwines the action of the translation functors $E_i$ on $\mathcal{GT}_\chi$ and the bimodules $\mathbb{E}_i$ on ${\bT}_\chi\mmod$.
\end{corollary}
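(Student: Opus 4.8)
The plan is to deduce this directly from the theorem above and Corollary~\ref{cor:GT-equiv}, with no new analytic input: essentially all the work has been done, and what remains is to organize it as a statement about bimodules. First I would recall that, as explained earlier in this section, the translation functor $E_i\colon \mathcal{GT}_\chi\to\mathcal{GT}_{\chi^{+i}}$ is tensor product over $U$ with the pro-Harish-Chandra bimodule $\mathscr{E}_i$, the summand of the completion of $\C^n\otimes U\cong U'$ on which $\mGT_{\la'}$ acts topologically nilpotently on the left and $\mGT_\la$ on the right, for $\la'\in\MaxSpec_{\Z,\chi^{+i}}(\Gamma)$ and $\la\in\MaxSpec_{\Z,\chi}(\Gamma)$. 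Under the equivalence $\boldsymbol{\Theta}$ of Corollary~\ref{cor:GT-equiv} --- which realizes $\mathcal{GT}_\chi$ as $\bar{e}\widehat{\bT}_\chi\bar{e}\wgmod$ via $\bigoplus_{\la\in\mathsf{S}}\Wei_\la$, with $\Wei_\la$ corresponding to the idempotent $e(\la)$, and similarly $\mathcal{GT}_{\chi^{+i}}$ as $\bar{e}'\widehat{\bT}_{\chi^{+i}}\bar{e}'\wgmod$ for a set $\mathsf{S}'$ of representatives --- tensoring with $\mathscr{E}_i$ is carried to tensoring with the bimodule $B$ whose $(\la',\la)$-block (for $\la'\in\mathsf{S}'$, $\la\in\mathsf{S}$) is the space of natural transformations from $\Wei_\la$ to $\Wei_{\la'}\circ E_i$. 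By the same reasoning used to prove Theorem~\ref{thm:GT-iso} --- i.e.\ \cite[Thm.~4.4]{WebGT} applied to the bimodule $\C^n\otimes U\cong U'$ in place of $U$ --- this block is $\varprojlim U'/(\mGT_{\la'}^NU'+U'\mGT_\la^N)$.

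Next I would invoke the theorem above, which identifies $\varprojlim U'/(\mGT_{\la'}^NU'+U'\mGT_\la^N)$ with $e'(\la')\mathbb{E}_i e'(\la)$ compatibly with bimodule structure. Summing over $\la\in\mathsf{S}$ and $\la'\in\mathsf{S}'$, and using that both this identification and the algebra identification of Theorem~\ref{thm:GT-iso} are realized through the single geometric model $\Ext^\bullet(\perv_{\Bi(\la)},\perv_{\Bi(\la')})$ with its convolution product (via Theorem~\ref{thm:T-iso}), the blockwise isomorphisms assemble into an isomorphism of bimodules $B\cong \bar{e}'\,\widehat{\mathbb{E}}_i\,\bar{e}$, where $\widehat{\mathbb{E}}_i$ denotes the completion of the ladder bimodule in the grading topology. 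Thus $\boldsymbol{\Theta}$ carries $E_i$ to tensor product with $\bar{e}'\,\widehat{\mathbb{E}}_i\,\bar{e}$.

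Finally I would return from the truncated idempotent to the whole algebra. As established in the proof of Corollary~\ref{cor:GT-equiv}, $\bar{e}$ and $\bar{e}'$ give Morita equivalences $\widehat{\bT}_\chi\sim\bar{e}\widehat{\bT}_\chi\bar{e}$ and $\widehat{\bT}_{\chi^{+i}}\sim\bar{e}'\widehat{\bT}_{\chi^{+i}}\bar{e}'$; under these, tensoring with $\bar{e}'\,\widehat{\mathbb{E}}_i\,\bar{e}$ is intertwined with tensoring with $\widehat{\mathbb{E}}_i$, and since every module in sight is finite-dimensional and weakly gradable, hence annihilated by a power of the grading ideal, this agrees with tensoring with $\mathbb{E}_i$ on $\bT^\chi\wgmod$. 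Composing gives $\boldsymbol{\Theta}\circ E_i\cong(\mathbb{E}_i\otimes_{\bT^\chi}-)\circ\boldsymbol{\Theta}$, as claimed. The one point requiring genuine care --- and the place I expect the argument to be most delicate --- is the compatibility invoked in the second step: one must check that the bimodule structure on $\varprojlim U'/(\cdots)$ furnished by the theorem above and the algebra structure on $\widehat{\bT}_\chi$ furnished by Theorem~\ref{thm:GT-iso} are \emph{simultaneously} matched by the geometric model, so that $B$ is $\bar{e}'\widehat{\mathbb{E}}_i\bar{e}$ as a bimodule and not merely blockwise as a vector space. The remaining completion bookkeeping is routine and handled exactly as in Corollary~\ref{cor:GT-equiv}.
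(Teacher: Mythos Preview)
Your proposal is correct and follows the same approach as the paper, which states this corollary without proof as an immediate consequence of the preceding theorem identifying $e'(\la')\mathbb{E}_i e'(\la)$ with $\varprojlim U'/(\mGT_{\la'}^NU'+U'\mGT_\la^N)$. You have simply made explicit the bookkeeping (passage to the idempotent-truncated algebra, Morita equivalence back, and completion versus uncompleted bimodules) that the paper leaves to the reader; the ``delicate point'' you flag about simultaneous compatibility of the bimodule and algebra identifications is indeed handled by the fact that both go through the same geometric/polynomial model, as you note.
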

Together with Theorem \ref{thm:E-iso}, this establishes Theorem \ref{thm:action}, completing the main results of the paper.
 \appendix
\section{The classification of orbits}\label{sec:orbits}
\centerline{by Jerry Guan and Ben Webster}
\bigskip 

In this appendix, we discuss the classification of orbits of $P_\chi$ on $V$, using the notation introduced earlier.  

Recall the definition of a $\chi$-flavored multi-segment from Definition \ref{def:segments}.
A {\bf subsegment} of $\Omega$ is a subset $S$ such that $S=\{ (k,j_k),(k+1,j_{k+1}),\dots, (\ell,j_{\ell})\} $ for a segment $(k,k+1,\dots, \ell)$.  A subsegment  carries a canonical charge: if $\ell< \rankp$, this requires no new information, and if $\ell=\rankp$, then we use $\chi_{j_{\rankp}}$ as our flavor.
A {\bf segmentation} of the set $\Omega$ is a partition of $\Omega$ into subsegments.  Every segmentation gives a choice of flavored multisegment.

\begin{definition}
For each segmentation $\Sigma$ of $\Omega$, there is a canonical map $f_{\Sigma}\colon \C^\Omega\to \C^{\Omega}$ sending $f(b_{k,j_k})=b_{k+1,j_{k+1}}$ when $(k,j_{k})$ and $(k+1,j_{k+1})$ lie in a subsegment together, and $f(b_{k,j})=0$ if there is no element $(k+1,j')$ in the same subsegment.
\end{definition}

\begin{lemma}\label{lem:orbits}
Every $\Gc{\chi}$-orbit on $V$ contains $f_{\Sigma}$ for some segmentation $\Sigma$, and $f_{\Sigma}$ and $f_{\Sigma'}$ are in the same orbit if and only if they have the same $\chi$-flavored multisegment.  Thus, we have a bijection between $\Gc{\chi}$-orbits and  $\chi$-flavored multi-segments with the corresponding dimension vector. 
\end{lemma}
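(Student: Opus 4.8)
The plan is to reduce to the classical representation theory of the equioriented quiver $A_{\rankp}$ and then, inside each isomorphism class, to control the refinement of the full base-change orbit into $G_\chi$-orbits by studying how the $P_\chi$-fixed flag sits relative to a canonical filtration. I identify $f\in V$ with the representation $M=M(f)$ of $A_{\rankp}$ on $\C^{\Omega}=\C^{v_1}\oplus\cdots\oplus\C^{v_{\rankp}}$ of dimension vector $(v_1,\dots,v_{\rankp})$. Under the full group $GL_{v_1}\times\cdots\times GL_{v_{\rankp}}$ two points are conjugate iff the representations are isomorphic, and by the classical type-$A$ classification the isomorphism classes of dimension vector $(v_1,\dots,v_{\rankp})$ are exactly the multisegments with that dimension vector, the indecomposable with support $(k,\dots,\ell)$ being the interval module. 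The point is that at the last vertex we only allow $P_\chi\subset GL_n$, which fixes the partial flag $0=W_0\subset W_1\subset\cdots\subset W_k=\C^n$ with $\dim(W_t/W_{t-1})=g_t$ the multiplicity of the $t$-th distinct value of $\chi$.

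First I would introduce a canonical filtration: set $M^{(t)}_i=(f_{\rankp-1}\cdots f_i)^{-1}(W_t)$ for $i<\rankp$ and $M^{(t)}_{\rankp}=W_t$. One checks each $M^{(t)}$ is a subrepresentation, that $M^{(0)}$ has zero at the last vertex while $M/M^{(0)}$ has all structure maps injective, and --- the one mildly homological step --- that $M=M^{(0)}\oplus(M/M^{(0)})$. The latter holds because $M/M^{(0)}$, being all-injective, is a sum of interval modules $(k,\dots,\rankp)$; lifting a generator of each summand to $M$ produces a subrepresentation isomorphic to $(k,\dots,\rankp)$ mapping isomorphically onto that summand, so the quotient map splits. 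Consequently $M^{(0)}$ is precisely the sum of the unflavored ($\ell<\rankp$) constituents of any interval decomposition, while $M/M^{(0)}$ collects the flavored ($\ell=\rankp$) ones; moreover $M^{(t)}/M^{(t-1)}$ is a direct sum of $g_t$ interval modules ending at $\rankp$ whose vertex-$\rankp$ lines span $W_t/W_{t-1}$, so these are exactly the flavored segments whose flavor is the $t$-th value. All of this data is manifestly $G_\chi$-invariant, being built only from kernels, images, and the $P_\chi$-fixed $W_\bullet$.

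From here the three assertions follow. For existence of some $f_\Sigma$ in the $G_\chi$-orbit of $f$: decompose $M^{(0)}$ into interval modules arbitrarily; choose a basis of $\C^n$ adapted simultaneously to the image flag $I_\bullet=(\operatorname{im}(f_{\rankp-1}\cdots f_i))_i$ and to $W_\bullet$ (any two partial flags admit a common adapted basis --- refine both to complete flags and use the relative-position normal form), and use it to split $M/M^{(0)}$ into interval modules whose vertex-$\rankp$ lines are adapted to $W_\bullet$; lift through the splitting above and move the resulting bases into standard position by an element of $G_0$ and the (automatically adapted) standard basis of $\C^n$ by an element of $P_\chi$. This exhibits $f$ as $f_\Sigma$. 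For the ``if'' direction, given segmentations with equal flavored multisegment, match their segments of equal shape; this determines permutation matrices $g_i\in GL_{v_i}$ for $i<\rankp$, and at the last vertex a permutation of $\{1,\dots,n\}$ preserving every block of $W_\bullet$ (matched flavored segments carry equal flavor), hence lying in the Levi of $P_\chi$; one checks segment by segment that this tuple carries $f_\Sigma$ to $f_{\Sigma'}$. For the ``only if'' direction, the multisegments of $M^{(0)}$ and of each $M^{(t)}/M^{(t-1)}$ together reconstitute the flavored multisegment of $\Sigma$, and they are $G_\chi$-invariants of $f$, so $G_\chi$-equivalent $f_\Sigma,f_{\Sigma'}$ have equal flavored multisegments. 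Finally every $\chi$-flavored multisegment of dimension vector $(v_1,\dots,v_{\rankp})$ is realized by some segmentation (assign slots greedily, matching the $g_t$ flavored segments of a given flavor to the $g_t$ top slots of the $t$-th block of $W_\bullet$), so $\Sigma\mapsto f_\Sigma$ descends to the asserted bijection.

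I expect the genuine obstacle to be the splitting $M=M^{(0)}\oplus(M/M^{(0)})$ together with the simultaneous adaptedness at the last vertex: one must arrange a single basis of $\C^n$ that is at once compatible with an interval decomposition of $M$ and adapted to the fixed flag $W_\bullet$. The rest is the standard structure theory of equioriented $A_{\rankp}$-representations and a direct matrix computation, and the closing bijection is bookkeeping.
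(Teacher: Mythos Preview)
Your argument is correct, and it takes a genuinely different route from the paper's. The paper proceeds by induction on the quiver: for the ``only if'' direction it counts, for each $k$, the segments of the form $(1,\dots,k)$ (and their flavors when $k=\rankp$) by reading off ranks and the preimage flag $f_{\rankp;1}^{-1}(F_\bullet)$ on $\C^{v_1}$, then quotients by the subrepresentation generated by $\C^{v_1}$ and inducts; for existence it builds adapted bases of $\C^{v_1},\C^{v_2},\dots$ one vertex at a time, each time extending the image of the previous basis along a carefully chosen flag. By contrast, you isolate all the unflavored segments at once via the canonical subrepresentation $M^{(0)}$ and all the flavored ones of each flavor via the subquotients $M^{(t)}/M^{(t-1)}$, so no induction on the quiver is needed. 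Your proof leans on two structural facts---the $\Ext^1$-vanishing that splits $M=M^{(0)}\oplus N$ (equivalently, that interval modules ending at $\rankp$ are projective) and the existence of a basis of $\C^n$ simultaneously adapted to the image flag $I_\bullet$ and the fixed flag $W_\bullet$---whereas the paper's argument is entirely self-contained linear algebra. What you gain is a cleaner conceptual picture separating the unflavored and flavored parts; what the paper's approach gains is an explicit basis-building recipe that does not invoke quiver homological algebra or Bruhat-type relative position as black boxes.
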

Note that we can think of this as a generalization of Gabriel's theorem on the representations of type A quivers:  if $v_{\rankm}=0$, then orbits correspond to isomorphism types of quiver representations, which are classified by multi-segments.

\begin{proof}
Obviously, if two segmentations $\Sigma, \Sigma'$ correspond to the same $\chi$-flavored multisegment, then there is a permutation $\sigma$ of $\Omega$ with $\sigma(\Sigma)=\Sigma'$ which preserves the first index, such that if $\sigma((\rankp,i))=(\rankp,j)$ then $\chi_i=\chi_j$.  The induced linear map $\C^{\Omega}\to \C^\Omega$ lies in  $\Gc{\chi}$ and witnesses the fact that these are in the same orbit.

Now, we turn to showing that if $\Sigma$ and ${\Sigma'}$ are segmentations where $f_\Sigma$ and $f_{\Sigma'}$ are in the same orbit, then the corresponding $\chi$-flavored multisegment is the same.

We'll prove this by induction on $m$.  As before, let $f_{i;j}=f_{i-1}\cdots f_j\colon \C^{v_j}\to \C^{v_i}$. Consider the map $f_{\rankp;1}\colon \C^{v_1}\to \C^{v_{\rankp}}$.  The space $\C^{v_{\rankp}}$ has a unique finest partial flag that is invariant under $P_\chi$.  This is of the form $0\subset F_{p_1}\subset \cdots \subset F_{p_\ell}$ where $p_k$ ranges over the integers which appear as values $\chi_i$. The dimension $\dim F_{p_k}/F_{p_{k-1}}=g_k$ is the number of indices $i$ such that $p_k=\chi_i$.
Thus, consider the flag \[0\subset \ker f_{\rankp;1}\subset f_{\rankp;1}^{-1}(F_{p_1})\subset f_{\rankp;1}^{-1}(F_{p_2})\subset \cdots\subset  f_{\rankp;1}^{-1}(F_{p_\ell})\]
For $f_\Sigma$, obviously, $\dim f_{\rankp;1}^{-1}(F_{p_1})/\ker f_{\rankp;1}$ is the number of segments of the form $(1,\dots, \rankp)$ of charge $p_1$, and $\dim f_{\rankp;1}^{-1}(F_{p_k})/f_{\rankp;1}^{-1}(F_{p_{k-1}})$ is the number of charge $p_k$.  Since these dimensions are the same on orbits of $\Gc{\chi}$, the segmentations $\Sigma$ and ${\Sigma'}$ must have the same number of segments of the form $(1,\dots, \rankp)$ of each given charge.

Similarly, the rank of the map $f_{\rankm;1}\colon \ker f_{\rankp;1}\subset \C^{v_1}\to \C^{v_{\rankm}}$ is the number of segments of the form $(1,\dots, \rankm).$  More generally, the rank of the map $f_{k;1}\colon \ker f_{k+1;1}\subset \C^{v_1}\to \C^{v_{k}}$ gives the number of segments of the form $(1,\dots, k)$.

This shows that each flavored segment containing a 1 appears with the same multiplicity in the multisegments for $\Sigma$ and $\Sigma'$.   Thus, if $w,w'\subset \C^{\Omega}$ are the subrepresentations generated by $\C^{v_1}$ under $f_\Sigma$ and $f_{\Sigma'}$, the induced representations on $\C^{\Omega}/W$ and $\C^{\Omega}/W'$ are of the form $f_{\bar{\Sigma}}$ and $f_{\bar{\Sigma}'}$ for $\bar{\Sigma}$ and $\bar{\Sigma}'$ segmentations obtained by throwing out all subsegments containing a $(1,k)$ (and reindexing by decreasing all indices).  By induction, $\bar{\Sigma}$ and $\bar{\Sigma}'$ have the same flavored multisegment, so the same is true of $\Sigma$ and $\Sigma'$.  

This shows that we have an injective map from flavored multi-segments with the right dimension vector to the set of orbits.  Now we need to show that this map is also surjective.

Consider the flag 
\begin{equation*}
    0\subset\ker f_{2;1}\subset \cdots \subset \ker f_{\rankp;1}\subset f_{\rankp;1}^{-1}(F_{p_1}) \subset f_{\rankp;1}^{-1}(F_{p_2})\subset \cdots\subset  f_{\rankp;1}^{-1}(F_{p_\ell})\subset \C^{v_1}.
\end{equation*}
Choose a ordered basis of $\C^{v_1}$ compatible with this flag, and let $g_1\in GL(\C^{v_1})$ be a matrix mapping this basis to   $b_{1,1},\dots, b_{1,v_1}.$ By construction, the non-zero images of this set under $f_{k;1}$ form a linearly independent set, which span $\operatorname{image}(f_{k;1})$ compatibly with the intersection with the flag 
\begin{equation}
    0\subset\ker f_{k+1;k}\subset \cdots \subset \ker f_{\rankp;k}\subset f_{\rankp;k}^{-1}(F_{p_1})\subset \cdots\subset  f_{\rankp;k}^{-1}(F_{p_\ell})\subset \C^{v_{k}}.\label{eq:big-flag}
\end{equation}
Thus, we can choose a basis of $\ker f_{3;2}\subset \C^{v_2}$ whose union with the non-zero images $f(g_1^{-1}b_{1,*})$ is still linearly independent; we then in turn extend this in turn to bases of \[\ker f_{4;2}\subset \ker f_{5;2}\subset \dots  f_{\rankp;2}^{-1}(F_{p_1})\subset  \dots\subset  f_{\rankp;2}^{-1}(F_{p_\ell}),\] to obtain a basis containing all non-zero vectors of the form $f_{1}(g_1^{-1}b_{1,*})$ which is compatible with the flag \eqref{eq:big-flag} in the case $k=2$.  Let $g_2$ be a matrix that maps this basis to $b_{2,*}$.

Now, apply the same process in $\C^{v_3}$, to construct a basis compatible with the flag \eqref{eq:big-flag} in the case $k=3$, and so on to construct such a basis in $\C^{v_{k+1}}$ compatible with $f_k(g_k^{-1}b_{k,*})$ and the flag \eqref{eq:big-flag} in the general case, with $g_{k+1}$ mapping this to the standard basis.

Note that in $\C^{v_{\rankp}}$, this just gives a basis compatible with $0\subset F_{p_1}\subset \cdots \subset F_{p_\ell}= \C^{v_{\rankp}}$; whereas for previous bases, we have been able to choose any bijection of our basis to the standard basis, we have to ensure that $g_{\rankp}\in P_\chi$, which is, of course, possible since our basis is compatible with the partial flag.  

Thus, the bases $\{g_k^{-1}b_{k,*}\}$ satisfy the property that $f$ sends each of these basis vectors to another basis vector or to 0.  This shows that $(g_1^{-1},\dots, g_{\rankp}^{-1})\cdot f=f_{\Sigma}$ for a segmentation $\Sigma.$ This completes the proof.
\end{proof}
\begin{lemma}\label{lem:simply-connected}
 Each of these orbits is $\Gc{\chi}$-equivariantly simply connected. 
\end{lemma}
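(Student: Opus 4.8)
The plan is to reduce the statement to connectedness of point stabilizers. For any $x$ in an orbit $\mathbb{O}\subseteq V$ we have $\mathbb{O}\cong G_\chi/\operatorname{Stab}_{G_\chi}(x)$, and a homogeneous space $G/H$ is equivariantly simply connected — equivalently, carries no nontrivial irreducible equivariant local system — exactly when $H$ is connected. So it suffices to prove that $\operatorname{Stab}_{G_\chi}(f)$ is connected for every $f\in V$; this does not even require the orbit classification of Lemma \ref{lem:orbits}.

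Fix $f\in V$, write $M_f=(\C^\Omega,f)$ for the corresponding representation of the linearly oriented $A_\rankp$-quiver, and let $0=\mathcal F_0\subset \mathcal F_1\subset\cdots\subset \mathcal F_\ell=\C^n$ be the partial flag whose stabilizer in $GL_n$ is $P_\chi$. The first step is to identify
\[
\operatorname{Stab}_{G_\chi}(f)=A_f^\times,\qquad A_f:=\{\,\phi\in\End(M_f):\phi(\mathcal F_k)\subseteq\mathcal F_k\text{ for all }k\,\},
\]
where $\End(M_f)$ is the endomorphism ring of $M_f$ as a quiver representation and $A_f^\times$ is its group of units. Indeed, an element of $G_\chi=P_\chi\times G_0$ is a graded invertible linear map of $\C^\Omega$ whose component on $\C^n$ preserves the flag; it fixes $f$ under the conjugation action precisely when it commutes with $f$, i.e.\ is an automorphism of $M_f$ that preserves the flag, and these are exactly the units of $A_f$: if $\phi\in A_f$ is invertible in $\End(M_f)$, then it restricts to an injective, hence bijective, self-map of each finite-dimensional space $\mathcal F_k$, so $\phi^{-1}$ again preserves the $\mathcal F_k$ and lies in $A_f$.

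The key point is that $A_f$ is a finite-dimensional associative unital $\C$-algebra: it contains $\operatorname{id}$, is an additive subgroup of $\End(M_f)$, is closed under composition because $\phi\psi(\mathcal F_k)\subseteq\phi(\mathcal F_k)\subseteq\mathcal F_k$, and is finite-dimensional since $M_f$ is. The unit group of any finite-dimensional associative unital $\C$-algebra $A$ is connected: via the faithful left regular representation $A\hookrightarrow\End_\C(A)$ the set $A^\times$ is the non-vanishing locus of the single polynomial $a\mapsto\det(x\mapsto ax)$ on the affine space $A$, hence a non-empty principal open subvariety, hence irreducible, hence connected. Thus $\operatorname{Stab}_{G_\chi}(f)=A_f^\times$ is connected for every $f$, which by the first paragraph gives the lemma.

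I do not foresee a genuine obstacle. The only point needing a little care is that $\operatorname{Stab}_{G_\chi}(f)$ equals $A_f^\times$ and not merely a subgroup of it — i.e.\ that flag-preserving automorphisms of $M_f$ are closed under inversion, which is the elementary observation above using finite-dimensionality of the flag steps.
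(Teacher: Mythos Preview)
Your argument is correct and is genuinely different from the paper's. The paper proves connectedness of the stabilizer by an explicit inductive description: it chooses the action on $\C^{v_1}$ (a parabolic), then on a complement in $\C^{v_2}$ (another parabolic times an affine space of ``off-diagonal'' maps), and so on, eventually exhibiting the stabilizer as a product of general linear groups (indexed by the multiplicities of the flavored segments) extended by a unipotent radical. Your approach instead packages the stabilizer as the unit group of the finite-dimensional $\C$-algebra $A_f=\{\phi\in\End(M_f):\phi(\mathcal F_k)\subseteq\mathcal F_k\}$ and invokes the general fact that unit groups of such algebras are Zariski-open in an affine space, hence irreducible and connected. This is shorter, does not use the orbit classification of Lemma~\ref{lem:orbits}, and works uniformly for every $f\in V$ rather than only for the normal-form representatives $f_\Sigma$. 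What the paper's argument buys in exchange is an explicit description of the reductive quotient of the stabilizer as $\prod GL_{m_S}$ over flavored segments $S$, information your argument does not extract but which is not needed for the lemma as stated.

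One small remark: both you and the paper reduce ``equivariantly simply connected'' to connectedness of the stabilizer. This uses that $G_\chi$ itself is connected (so that equivariant local systems on $G_\chi/H$ correspond to representations of $\pi_0(H)$), which is clear here since $P_\chi$ and each $GL_{v_i}$ are connected, but is worth saying explicitly.
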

\begin{proof}
To show this, we need only show that the stabilizer of $f_{\Sigma}$ in $\Gc{\chi}$ is simply connected.  

We can think about this stabilizer as follows.  The element $g_1$ acting on $\C^{v_1}$ must preserve the flag \eqref{eq:big-flag}, but otherwise can be chosen freely. This is a connected parabolic subgroup.  This choice determines how $g_2$ acts on the image of $f$ in $\C^{v_2}$, but on the complement $U$ to this image (given by the span of the appropriate vectors) we can freely choose an isomorphism of $U$ to itself compatible with the intersection with the flag \eqref{eq:big-flag} (again a connected parabolic), and any map $U \to \operatorname{image}(f)$ compatible with this flag (a contractible set).    

Applying this inductively, we find that the stabilizer is topologically a product of these parabolics (always connected) with affine spaces.  Paying attention to the group structure, we have a product of general linear groups (with ranks given by the multiplicities with which given flavored segments appear) extended by a unipotent radical.  Thus, as a quotient, the orbit is homotopic to a product of classifying spaces $BGL_{*}$ and, in particular, is simply connected.
\end{proof}
 \bigskip
\IndexOfNotation
\bibliography{gen}

\newcommand{\etalchar}[1]{$^{#1}$}
\def\cprime{$'$} \def\cprime{$'$} \def\cprime{$'$}
\providecommand{\bysame}{\leavevmode\hbox to3em{\hrulefill}\thinspace}
\providecommand{\MR}{\relax\ifhmode\unskip\space\fi MR }
\providecommand{\MRhref}[2]{%
  \href{http://www.ams.org/mathscinet-getitem?mr=#1}{#2}
}
\providecommand{\href}[2]{#2}
\begin{thebibliography}{KTW{\etalchar{+}}19}

\bibitem[BCK19]{BCNR}
Jonathan Brundan, Jonathan Comes, and Jonathan~Robert Kujawa, \emph{A basis
  theorem for the degenerate affine oriented {B}rauer-{C}lifford
  supercategory}, Canad. J. Math. \textbf{71} (2019), no.~5, 1061--1101.
  \MR{4010422}

\bibitem[BG80]{BeGe}
J.~N. Bernstein and S.~I. Gel{\cprime}fand, \emph{Tensor products of finite-
  and infinite-dimensional representations of semisimple {L}ie algebras},
  Compositio Math. \textbf{41} (1980), no.~2, 245--285.

\bibitem[BK08]{BKrsy}
Jonathan Brundan and Alexander Kleshchev, \emph{Representations of shifted
  {Y}angians and finite {$W$}-algebras}, Mem. Amer. Math. Soc. \textbf{196}
  (2008), no.~918, viii+107. \MR{2456464 (2009i:17020)}

\bibitem[BL94]{BL}
Joseph Bernstein and Valery Lunts, \emph{Equivariant sheaves and functors},
  Lecture Notes in Mathematics, vol. 1578, Springer-Verlag, Berlin, 1994.
  \MR{MR1299527 (95k:55012)}

\bibitem[Bru16]{Brundandef}
Jonathan Brundan, \emph{On the definition of {K}ac-{M}oody 2-category}, 2016,
  pp.~353--372. \MR{3451390}

\bibitem[Bru18]{BruHei}
\bysame, \emph{On the definition of {H}eisenberg category}, Algebr. Comb.
  \textbf{1} (2018), no.~4, 523--544. \MR{3875075}

\bibitem[BSW20]{BSW3}
Jonathan Brundan, Alistair Savage, and Ben Webster, \emph{Heisenberg and
  {{Kac-Moody}} categorification}, Selecta Math. (N.S.) \textbf{26} (2020),
  no.~5, Paper No. 74, 62. \MR{4169511}

\bibitem[CG97]{CG97}
Neil Chriss and Victor Ginzburg, \emph{Representation theory and complex
  geometry}, Birkh\"auser Boston Inc., Boston, MA, 1997. \MR{98i:22021}

\bibitem[CR08]{CR04}
Joseph Chuang and Rapha{\"e}l Rouquier, \emph{Derived equivalences for
  symmetric groups and {$\mathfrak {sl}_2$}-categorification}, Ann. of Math.
  (2) \textbf{167} (2008), no.~1, 245--298.

\bibitem[CSG22]{ciriciFilteredAinfinity2022}
Joana Cirici and Anna Sopena-Gilboy, \emph{Filtered {{A-infinity}} structures
  in complex geometry}, Proc. Amer. Math. Soc. \textbf{150} (2022), no.~9,
  4067--4082.

\bibitem[DFO94]{FOD}
Yu.~A. Drozd, V.~M. Futorny, and S.~A. Ovsienko, \emph{Harish-{C}handra
  subalgebras and {G}el\cprime fand-{Z}etlin modules}, Finite-dimensional
  algebras and related topics ({O}ttawa, {ON}, 1992), NATO Adv. Sci. Inst. Ser.
  C Math. Phys. Sci., vol. 424, Kluwer Acad. Publ., Dordrecht, 1994,
  pp.~79--93. \MR{1308982}

\bibitem[FGR17]{FGRnew}
Vyacheslav Futorny, Dimitar Grantcharov, and Luis~Enrique Ramirez, \emph{New
  singular {Gelfand--Tsetlin} $\mathfrak{gl}(n)$-modules of index 2},
  Communications in Mathematical Physics \textbf{355} (2017), no.~3,
  1209--1241.

\bibitem[FGRZ20]{FGRZVerma}
Vyacheslav Futorny, Dimitar Grantcharov, Luis~Enrique Ramirez, and Pablo
  Zadunaisky, \emph{Bounds of {G}elfand-{T}setlin multiplicities and tableaux
  realizations of {V}erma modules}, J. Algebra \textbf{556} (2020), 412--436.
  \MR{4085917}

\bibitem[KL09]{KLI}
Mikhail Khovanov and Aaron~D. Lauda, \emph{A diagrammatic approach to
  categorification of quantum groups. {I}}, Represent. Theory \textbf{13}
  (2009), 309--347.

\bibitem[KL10]{KLIII}
\bysame, \emph{A categorification of quantum {${\mathfrak sl}(n)$}}, Quantum
  Topol. \textbf{1} (2010), no.~1, 1--92. \MR{2628852 (2011g:17028)}

\bibitem[KLMS12]{KLMS}
Mikhail Khovanov, Aaron~D. Lauda, Marco Mackaay, and Marko Sto{\v{s}}i{\'c},
  \emph{Extended graphical calculus for categorified quantum {${\mathfrak
  sl}(2)$}}, Mem. Amer. Math. Soc. \textbf{219} (2012), no.~1029, vi+87.
  \MR{2963085}

\bibitem[KLSY22]{KLSY}
Mikhail Khovanov, Aaron~D Lauda, Joshua Sussan, and Yasuyoshi Yonezawa,
  \emph{Braid group actions from categorical symmetric {H}owe duality on
  deformed {W}ebster algebras}, Transform. Groups \textbf{27} (2022), no.~3,
  919--982, \arxiv{1802.05358}.

\bibitem[KS18]{KSred}
Mikhail Khovanov and Joshua Sussan, \emph{The {S}oergel category and the
  redotted {W}ebster algebra}, J. Pure Appl. Algebra \textbf{222} (2018),
  no.~8, 1957--2000. \MR{3771843}

\bibitem[KTW{\etalchar{+}}19]{KTWWYO}
Joel Kamnitzer, Peter Tingley, Ben Webster, Alex Weekes, and Oded Yacobi,
  \emph{On category {$\mathcal O$} for affine {G}rassmannian slices and
  categorified tensor products}, Proc. Lond. Math. Soc. (3) \textbf{119}
  (2019), no.~5, 1179--1233. \MR{3968721}

\bibitem[Maz99]{mazorchukOGZ}
Volodymyr Mazorchuk, \emph{{Orthogonal Gelfand-Zetlin Algebras I}},
  Contributions to Algebra and Geometry \textbf{40} (1999), no.~2, 399--415.

\bibitem[RZ18]{RZ18}
L.~E. Ram\'{i}rez and P.~Zadunaisky, \emph{Gelfand-{T}setlin modules over
  {$\mathfrak{gl}(n)$} with arbitrary characters}, J. Algebra \textbf{502}
  (2018), 328--346. \MR{3774896}

\bibitem[Sai]{saito2016young}
Morihiko Saito, \emph{A young person's guide to mixed hodge modules},
  \arxiv{1605.00435}.

\bibitem[Soe92]{Soe92}
Wolfgang Soergel, \emph{The combinatorics of {H}arish-{C}handra bimodules}, J.
  Reine Angew. Math. \textbf{429} (1992), 49--74. \MR{MR1173115 (94b:17011)}

\bibitem[SW]{SWschur}
Catharina Stroppel and Ben Webster, \emph{Quiver {S}chur algebras and
  $q$-{F}ock space}, \arxiv{1110.1115}.

\bibitem[SW24]{SilverthorneW}
Turner Silverthorne and Ben Webster, \emph{Gelfand-{T}setlin modules:
  canonicity and calculations}, Algebr. Represent. Theory \textbf{27} (2024),
  no.~2, 1405--1455. \MR{4741528}

\bibitem[Weba]{WebSD}
Ben Webster, \emph{{Koszul duality between Higgs and Coulomb categories
  $\mathcal{O}$}}, \arxiv{1611.06541}.

\bibitem[Webb]{Webunfurl}
\bysame, \emph{{Unfurling Khovanov-Lauda-Rouquier algebras}},
  \arxiv{1603.06311}.

\bibitem[Web15]{WebCB}
\bysame, \emph{Canonical bases and higher representation theory}, Compos. Math.
  \textbf{151} (2015), no.~1, 121--166.

\bibitem[Web16]{WebTGK}
\bysame, \emph{Tensor product algebras, {G}rassmannians and {K}hovanov
  homology}, Physics and mathematics of link homology, Contemp. Math., vol.
  680, Amer. Math. Soc., Providence, RI, 2016, pp.~23--58. \MR{3591642}

\bibitem[Web17a]{Webcomparison}
\bysame, \emph{Comparison of canonical bases for {S}chur and universal
  enveloping algebras.}, Transform. Groups \textbf{22} (2017), no.~3, 869--883.
  \MR{3682839}

\bibitem[Web17b]{Webmerged}
\bysame, \emph{Knot invariants and higher representation theory}, Mem. Amer.
  Math. Soc. \textbf{250} (2017), no.~1191, 141.

\bibitem[Web17c]{Webqui}
\bysame, \emph{On generalized category $\mathcal{O}$ for a quiver variety},
  Mathematische Annalen \textbf{368} (2017), no.~1, 483--536.

\bibitem[Web19]{WebwKLR}
\bysame, \emph{Weighted {K}hovanov-{L}auda-{R}ouquier algebras}, Doc. Math.
  \textbf{24} (2019), 209--250. \MR{3946709}

\bibitem[Web20]{WebBKnote}
\bysame, \emph{On graded presentations of {H}ecke algebras and their
  generalizations}, Algebr. Comb. \textbf{3} (2020), no.~1, 1--38. \MR{4068742}

\bibitem[Web24]{WebGT}
\bysame, \emph{{Gelfand-Tsetlin modules in the Coulomb context}}, Ann.
  Represent. Theory \textbf{1} (2024), no.~3, 393--437, \arxiv{1904.05415}.

\bibitem[WY23]{webster3DimensionalMirror2023}
Ben Webster and Philsang Yoo, \emph{3-{{Dimensional Mirror Symmetry}}}, Notices
  of the American Mathematical Society \textbf{70} (2023), no.~09, 1.

\end{thebibliography}
\bibliographystyle{amsalpha}
\end{document}